\title[Representations of Gaussian and Eisenstein Integers]{Optimal Representations of Gaussian  and Eisenstein Integers using digit sets closed under multiplication}
\author[1]{Adam Blažek, Edita Pelantová,   and Milena Svobodová }
\def\N{\mathbb N}
\def\Z{\mathbb Z}
\def\C{\mathbb C}
\def\B{\mathcal B}
\def\CC{\mathcal C}
\def\D{\mathcal D}
\def\P{\mathcal P}
\def\ii{\imath}
\def\G{\widetilde{G}}
\def\GG{\widetilde{\Gamma}}
\newtheorem{thm}{Theorem}[section]
\newtheorem{theorem}[thm]{Theorem}
\newtheorem{corollary}[thm]{Corollary}
\newtheorem{lemma}[thm]{Lemma}
\newtheorem{proposition}[thm]{Proposition}
\newtheorem{definition}[thm]{Definition}
\crefname{thm}{theorem}{theorems}
\crefname{theorem}{theorem}{theorems}
\crefname{coro}{corollary}{corollaries}
\crefname{example}{example}{examples}
\crefname{lemma}{lemma}{lemmas}
\crefname{lmm}{lemma}{lemmas}
\crefname{claim}{claim}{claims}
\crefname{obs}{observation}{observations}
\crefname{proposition}{proposition}{propositions}
\crefname{prop}{proposition}{propositions}
\crefname{definition}{definition}{definitions}
\theoremstyle{remark}
\newtheorem{remark}[thm]{Remark}
\theoremstyle{example}
\newtheorem{example}[thm]{Example}
\crefname{example}{example}{examples}
\definecolor{zero}{HTML}{666666}
\definecolor{0}{HTML}{0C9E00}
\definecolor{60}{HTML}{0066FF}
\definecolor{90}{HTML}{0066FF}
\definecolor{120}{HTML}{8900FF}
\definecolor{180}{HTML}{FF0000}
\definecolor{240}{HTML}{FF7F00}
\definecolor{270}{HTML}{DEC100}
\definecolor{300}{HTML}{DEC100}
\tikzstyle{zero}=[-{Latex[length=5pt]},thick,solid,color=zero]
\tikzstyle{0}=[-{Latex[length=5pt]},thick,densely dashed,color=0]
\tikzstyle{60}=[-{Latex[length=5pt]},thick,dash dot,color=60]
\tikzstyle{90}=[-{Latex[length=5pt]},thick,dash dot,color=90]
\tikzstyle{120}=[-{Latex[length=5pt]},very thin,densely dashed,color=120]
\tikzstyle{180}=[-{Latex[length=5pt]},thick,densely dotted,color=180]
\tikzstyle{240}=[-{Latex[length=5pt]},very thin,solid,color=240]
\tikzstyle{270}=[-{Latex[length=5pt]},thick,densely dash dot dot,color=270]
\tikzstyle{300}=[-{Latex[length=5pt]},thick,densely dash dot dot,color=300]
\begin{document}

\maketitle

%%%%%%%%%%%%%%%%%%%%%%%%%%%%%%%%%%%%%%%%%%%%%%%%%%%%%%%%%%%%%%%%%%%%%%%%%%%%%%%%%%%%%%%%%%%%%%%%%%%%
%%%%%%%%%%%%%%%%%%%%%%%%%%%%%%%%%%%%%%%%%%%%%%%%%%%%%%%%%%%%%%%%%%%%%%%%%%%%%%%%%%%%%%%%%%%%%%%%%%%%

\begin{abstract}

We study two positional numeration systems which are known for allowing very efficient addition and multiplication of complex numbers. The first one uses the base $\beta = \ii - 1$ and the digit set $\D = \{ 0, \pm 1, \pm \ii \}$. In this numeration system, every non-zero Gaussian integer~$x$ has an infinite number of representations. We focus on  optimal representations of~$x$ -- i.e., representations with minimal possible number of non-zero digits. One of the optimal representations of~$x$ has the so-called $3$-non-adjacent form ($3$-NAF). We provide an upper bound on the number of distinct optimal representations of~$x$, depending on the number of non-zero digits in the $3$-NAF of~$x$. We also characterize the Gaussian integers for which the upper bound is attained. 
The same questions are answered also for the second numeration system  with base $\beta = \omega - 1$ and digit set $\D = \{ 0, \pm 1, \pm \omega, \pm \omega^2 \}$, where $\omega = \exp(2\pi\ii / 3)$. In this system, every Eisenstein integer has a $2$-NAF, which is optimal. 
This paper can be understood as an analogy to the result of Grabner and Heuberger obtained for the signed binary numeration system, using base $\beta = 2$ and digit set $\D = \{0, \pm 1\}$.
\end{abstract}

\bigskip

\centerline{Affiliation: Department of Mathematics,Czech Technical University in Prague }

\bigskip

%%%%%%%%%%%%%%%%%%%%%%%%%%%%%%%%%%%%%%%%%%%%%%%%%%%%%%%%%%%%%%%%%%%%%%%%%%%%%%%%%%%%%%%%%%%%%%%%%%%%
%%%%%%%%%%%%%%%%%%%%%%%%%%%%%%%%%%%%%%%%%%%%%%%%%%%%%%%%%%%%%%%%%%%%%%%%%%%%%%%%%%%%%%%%%%%%%%%%%%%%
\section{Introduction}
%%%%%%%%%%%%%%%%%%%%%%%%%%%%%%%%%%%%%%%%%%%%%%%%%%%%%%%%%%%%%%%%%%%%%%%%%%%%%%%%%%%%%%%%%%%%%%%%%%%%

Methods for speeding up arithmetical operations have been in the center of research for mathematicians and engineers since the 1950's. It is in cryptography, but also in other tasks requiring massive calculations, that these methods are finding their applications.

G.~W.~Reitwiesner \cite{R1960} has shown that any integer~$x$ can be represented in the numeration system with base $\beta = 2$ and digit set $\D = \{-1, 0, 1\}$ in the form $x = \sum_{k=0}^{N} d_k 2^k$ in such a way that at most one out of two neighbouring digits $d_j$ and $d_{j-1}$ is non-zero for each $j$. Such a representation is unique for a given integer~$x$, and, moreover, it has the minimum Hamming weight amongst all representations of~$x$ in this numeration system $(\beta, \D)$. Contrary to the classic binary numeration system with digit set $\{0, 1\}$, where half of the digits in number representations are non-zero on average, the representations introduced by Reitwiesner have just one third   of non-zero digits on average. This decreases the number of additions needed in processing of multiplication.

The idea of Reitwiesner was generalized into the notion of so-called {\it width-$W$ non-adjacent form} -- i.e., representing numbers in a basis and digit set chosen conveniently so that any~$W$ consecutive  digits contain at most one non-zero digit. Such representations are called $W$-NAF representations. For imaginary quadratic rings, which are in the center of our interest, a detailed analysis of relations between the bases, digit sets and width~$W$ was performed by C.~Heuberger and D.~Krenn in \cite{HK2013} and~\cite{HK2013b}.

The on-line algorithm for multiplication of real numbers proposed by K.~S.~Trivedi and M.~D.~Ercegovac~\cite{TrEr} obtains the $n$~most significant digits of the result in linear time with respect to~$n$. Both input and output numbers are represented in a~numeration system with base $\beta \in \C$, $|\beta| > 1$ and digit set~$\D$ allowing addition in parallel, as firstly introduced by A.~Avizienis~\cite{Avi}. Addition is performed in a constant number of steps, irrespective of the length of the summands. That is possible, however, only when the digit set is rich enough. Suitable alphabets for parallel addition in integer bases were described by C.~Y.~Chow and J.~E.~Robertson \cite{ChR1978} and by B.~Parhami~\cite{Parhami}, and minimal alphabets for quadratic bases can be found in~\cite{LS2019}.

\medskip

Our goal is to draw attention to two numeration systems for representing complex numbers. These systems are defined by their bases and digit sets as follows:
\begin{eqnarray}
    \label{D:Penney-integers}
    \beta = \ii - 1 & \text{ and } & \D = \{ 0, \pm 1, \pm \ii \}, \\
    \label{D:Eisenstein-integers}
    \beta = \omega - 1 & \text{ and } & \D = \{ 0, \pm 1, \pm \omega, \pm \omega^2 \}, \text{ where } \omega = \exp(2\pi\ii / 3).
\end{eqnarray}
Both these numeration systems can express any complex number~$x$ in the form $x = \sum_{k  =-\infty}^N d_k \beta^k$, where $d_k \in \D$.

When focusing on numbers with representations containing only non-negative powers of the base~$\beta$, we get
$$
\Bigl\{\sum \limits_{k=0}^N d_k \beta^k \, : \, N \in \N \text{ and } d_k \in \D \Bigr\} = \left\{ 
\begin{array}{cl}
    \mathbb{Z}[\ii]     & \text{= the ring of Gaussian integers in case (1),}\\
    \mathbb{Z}[\omega]  & \text{= the ring of Eisenstein integers in case (2).}
\end{array} \right.
$$

These two systems are exceptional when it comes to efficiency of multiplication algorithm. The efficiency stems from the following three properties of $(\beta, \D)$:
\begin{itemize}
    \item the digit set $\D \subset \C$, in both cases (1) and~(2), is closed under multiplication and thus the operation of multiplication by a single digit is carry-free;   
    \item the $3$-NAF representation of any element~$x$ of the ring $\Z[\ii]$ is available, and at the same time it is minimal with respect to the number of non-zero digits in the representation; the same is true for the $2$-NAF representation of any element of the ring $\Z[\omega]$;
    \item addition in $(\beta, \D)$ is doable in parallel, and also efficient on-line multiplication is possible. 
\end{itemize} 
The $W$-NAF representation of~$x$ has the minimal Hamming weight, but the same Hamming weight can be achieved also with other representations of ~$x$ in the system $(\beta, \D)$. Representations with the minimal Hamming weight shall be called {\it optimal}. Using a randomly chosen optimal representation of~$x$, instead of its $W$-NAF representation, can improve resistance of the elliptic curve crypto-systems against differential power analysis. For the numeration system with base $\beta = 2$ and digit set $\D = \{0, \pm 1\}$, a sharp upper bound on the number of optimal representations of an integer $n$ was deduced by Grabner and Heuberger in \cite{GraHeu2006}. In 2010, Wu et al.~\cite{WZWD2010} provided an upper bound on the number of optimal representations, their bound depends on length of the $2$-NAF representation of~$x$.  

\medskip

In our contribution, we first compare the two systems mentioned in cases \eqref{D:Penney-integers} and~\eqref{D:Eisenstein-integers} from the perspective of the density of non-zero digits in representations of complex numbers. We show that the first one is slightly more advantageous in this respect.

For each of the two numeration systems $(\beta, \D)$, we assign a matrix~$A_d$ to every digit $d \in \D$, and then derive a matrix formula for enumeration of $f(x)$ = the number of optimal representations of $x \in \mathbb{Z}[\beta]$. In fact, we show that $f(x)$ is a recognisable series as introduced in \cite{BeRe2010} and the family of matrices $A_d$ in $\Z^{9 \times 9}$ and $\Z^{7 \times 7}$, for Gaussian and Eisenstein system respectively, together with a row vector and a column vector, form their linear representations.  

We provide a sharp upper bound on $f(x)$ depending on the number of non-zero digits in the $W$-NAF representation of~$x$. The task of finding this bound is transformed into a related task to determine the joint spectral radius for the set of matrices $\{A_d : d \in \D\}$. We propose a graph in which a random walk assigns an optimal representation of~$x$, starting from the $W$-NAF representation of~$x$. 
 
In Section \ref{sec:Penney}, we study in detail all optimal representations in the numeration system~$(1)$ for Gaussian integers. The digit set~$\D$ used in the numeration system~$(2)$ for representing Eisenstein integers has more symmetries than the digit set for system~$(1)$. That greatly simplifies the process of deriving the properties of system~$(2)$, therefore, in Section~\ref{sec:Eisenstein} we provide just a sketch of the proofs.

%%%%%%%%%%%%%%%%%%%%%%%%%%%%%%%%%%%%%%%%%%%%%%%%%%%%%%%%%%%%%%%%%%%%%%%%%%%%%%%%%%%%%%%%%%%%%%%%%%%%
%%%%%%%%%%%%%%%%%%%%%%%%%%%%%%%%%%%%%%%%%%%%%%%%%%%%%%%%%%%%%%%%%%%%%%%%%%%%%%%%%%%%%%%%%%%%%%%%%%%%
\section{Known results on two selected number systems} 
%%%%%%%%%%%%%%%%%%%%%%%%%%%%%%%%%%%%%%%%%%%%%%%%%%%%%%%%%%%%%%%%%%%%%%%%%%%%%%%%%%%%%%%%%%%%%%%%%%%%

The two bases $\beta \in \C$ that we study are imaginary quadratic. In particular,
\begin{itemize}
    \item the first base $\beta = \ii - 1$ is a root of the minimal polynomial $X^2 + 2X + 2$ and $\Z[\beta] = \Z[\ii]$ is the ring of {\it Gaussian integers};
    \item the second base $\beta = \omega - 1$ is a root of $X^2 + 3X + 3$ and $\Z[\beta] = \Z[\omega]$ is the ring of {\it Eisenstein integers}. 
\end{itemize}

In accordance with terminology of C.~Heuberger and D.~Krenn, the pair consisting of the base~$\beta$ and of the digit set~$\D$ is called {\it numeration system}, when each element of the ring $\Z[\beta]$ can be expressed in the form $\sum_{k=0}^N d_k \beta_k$. It is clear that if $(\beta, \D)$ forms a numeration system, then the number of elements in the digit set~$\D$ must be at least equal to  the norm of~$\beta$ -- i.e., $N(\beta) = \beta \overline{\beta}$. Moreover, the digit set must contain at least one representative from each congruence class modulo~$\beta$. Still, fulfilment of both these conditions is not sufficient to form a numeration system.

W.~Penney in~\cite{P1965} has shown that the base $\beta = \ii - 1$ with the binary digit set $\{0, 1\}$ constitute a~numeration system, wherein, of course, the set $\{0, 1\}$ is closed under multiplication. Note that, on the contrary, a similar base $\beta = \ii + 1$ with the same digit set does not form a numeration system.

The base $\beta = \omega - 1$ has norm $N(\beta) = 3$ and forms a~numeration system together with digit sets $\{0, 1, 2\}$ or $\{0, 1, -\omega\}$, but these digit sets are not closed under multiplication. On the other hand, $\beta = \omega - 1$ with the digit set $\{0, \pm 1\}$, which is closed under multiplication, do not form a numeration system; for instance, $x = \overline{\omega}$ cannot be expressed as $\sum_{k=0}^N d_k \beta^k$.

When $(\beta, \D)$ is a numeration system and $\#\D$ is equal to the norm~$N(\beta)$, then each element of the ring $\Z[\beta]$ has a unique $(\beta, \D)$-representation. This property is not necessarily advantageous. Redundant digit sets can enable representing numbers using a low number of non-zero digits, and also defining better algorithms for processing arithmetic operations. The following Theorem from \cite{FrPeSv} and~\cite{L2018} shows the necessary level of redundancy for parallel addition:

%%%%%%%%%%%%%%%%%%%%%%%%%%%%%%%%%%%%%%%%%%%%%%%%%%
\begin{theorem}\label{thm:paralel}
Let $\beta \in \C$, $|\beta| > 1$ be an algebraic integer with the minimal polynomial $f(X)$, and let $\D \subset \Z[\beta]$. If the system $(\beta, \D)$ allows parallel addition, then $\# \D \geq |f(1)|$. 
\end{theorem}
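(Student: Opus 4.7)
The key algebraic ingredient is the ring isomorphism $\Z[\beta]/(1-\beta)\Z[\beta] \cong \Z/|f(1)|\Z$. Since $\Z[\beta]\cong \Z[X]/(f(X))$ and we further quotient by $X-1$, by a standard CRT-type argument we get $\Z[X]/(f(X),X-1)\cong \Z/|f(1)|\Z$. Let $\pi\colon \Z[\beta]\to \Z/|f(1)|\Z$ denote this quotient map; crucially, $\pi(\beta)=1$, so for any representation of an element $x\in\Z[\beta]$ one has $\pi\bigl(\sum d_k\beta^k\bigr) = \sum \pi(d_k) \pmod{|f(1)|}$. Thus the residue modulo $\beta-1$ equals the digit sum modulo $|f(1)|$, turning the problem into a question about how $\pi$ acts on $\D$.

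My plan is to show that parallel addition forces the restriction $\pi|_{\D}\colon \D \to \Z/|f(1)|\Z$ to be surjective, which immediately yields $\#\D \geq |f(1)|$. I would first record that $(\beta,\D)$ represents every element of $\Z[\beta]$, so $\pi(\D)$ at least generates $\Z/|f(1)|\Z$ as a group; however, mere generation is too weak and cannot give the sharp bound. The strengthening comes from the locality of addition: if parallel addition is realized by a sliding-window function $\phi$ of radius $W$, then each output digit depends only on $O(W)$ input digits around its position, while the residue modulo $\beta-1$ is a global invariant preserved by $\pi$.

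The heart of the argument, and the main obstacle, is translating this locality into surjectivity of $\pi|_\D$. I would proceed by contradiction: assume some residue $\rho\in\Z/|f(1)|\Z$ lies outside $\pi(\D)$. I would then exploit redundancy to produce two pairs of input representations $(x,y)$ and $(x',y')$ with identical sums in $\Z[\beta]$ but which differ at certain isolated positions in a way that, after the window $\phi$ acts, forces the output digit at some position to realize the forbidden residue $\rho$. This uses that far-apart perturbations of the input cannot interact through $\phi$, so their contributions to the digit sum (hence to $\pi$) must be absorbed locally by individual output digits, contradicting $\rho\notin \pi(\D)$.

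Finally, once surjectivity of $\pi|_\D$ onto a set of cardinality $|f(1)|$ is established, the conclusion $\#\D\geq |f(1)|$ is immediate. I anticipate that the bookkeeping in the contradiction step—carefully choosing the perturbations and tracking the window interactions, including dealing with the two-sided lookahead and the possibility of several output digits absorbing the perturbation jointly—is the only delicate part; the algebraic framework and the final counting are routine.
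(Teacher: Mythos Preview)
The paper does not prove this theorem; it is quoted from \cite{FrPeSv} and \cite{L2018} as a known result, so there is no in-paper proof to compare against. I therefore assess your proposal on its own merits and against the argument in those references.

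Your algebraic setup is exactly the right one, and the target---surjectivity of $\pi|_{\D}:\D\to\Z/|f(1)|\Z$---is precisely what the cited proofs establish. The gap is in the ``heart of the argument''. The mechanism you describe (two pairs $(x,y)$ and $(x',y')$ with identical sums in $\Z[\beta]$, differing at isolated positions, forcing a single output digit to realise the forbidden residue $\rho$) does not work as stated. First, parallel addition is a function on digit \emph{strings}, not on numbers, so equal sums do not force equal outputs; there is nothing to compare. Second, even if you instead perturb a single pair at far-apart positions, the resulting change in $\pi$ of the output is spread over an entire window of output digits, not concentrated in one position, so you only obtain a relation of the form $\pi(d')-\pi(d)=\sum_j\bigl(\pi(e'_j)-\pi(e_j)\bigr)$, which says nothing about any individual $\pi(e_j)$ hitting a prescribed residue.

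The argument that closes this gap (and is the one in \cite{FrPeSv}) uses \emph{constant} inputs rather than perturbations. Feed the parallel adder the length-$N$ strings $dd\cdots d$ and $d'd'\cdots d'$. By locality, away from the two boundary zones of width $O(W)$ the output is a constant digit $e=e(d,d')\in\D$. Applying $\pi$ to the identity ``output represents sum'' gives
\[
N\bigl(\pi(d)+\pi(d')\bigr)=(N-O(1))\,\pi(e)+S
\]
in $\Z/|f(1)|\Z$, where $S$ collects the boundary contributions and does not depend on $N$. Varying $N$ forces $\pi(e)=\pi(d)+\pi(d')$. Hence $\pi(\D)$ is closed under addition in the finite group $\Z/|f(1)|\Z$, so it is a subgroup; since $\D$ represents every element of $\Z[\beta]$, $\pi(\D)$ also generates the group, and therefore $\pi(\D)=\Z/|f(1)|\Z$, giving $\#\D\ge|f(1)|$. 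Your framework is sound; replace the perturbation idea by this periodic-input argument and the proof goes through.
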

%%%%%%%%%%%%%%%%%%%%%%%%%%%%%%%%%%%%%%%%%%%%%%%%%%

The digit set $\{0, \pm 1\}$ is closed under multiplication, and it is redundant with respect to the base $\beta = \ii - 1$. The minimal polynomial of $\beta$ is $f(X) = X^2 + 2X + 2$, and thus $f(1) = 5$. According to Theorem \ref{thm:paralel}, the digit set $\{0, \pm 1\}$ does not enable parallel addition.

Similarly, the digit set $\{0, 1, \omega, \omega^2\}$ with $\omega = \exp(2\pi\ii / 3)$ is closed under multiplication, and it is redundant with respect to the base $\beta = \omega - 1$. But, again, this digit set does not enable parallel addition for this base $\beta$, because the minimal polynomial of $\beta$ is $f(X) = X^2 + 3X + 3$, and thus $f(1) = 7$.

%%%%%%%%%%%%%%%%%%%%%%%%%%%%%%%%%%%%%%%%%%%%%%%%%%
\begin{definition}\label{de:minimalNorm}
Let $\beta$ be an algebraic integer, imaginary quadratic, $W \in \N$, $W \geq 2$. The set $\D \subset \Z[\beta]$ is called a {\it minimal norm representative digit set with respect to~$W$}, if it consists of $0$ and exactly one representative from each residue class of $\Z[\beta]$ modulo~$\beta^W$ that is not divisible by~$\beta$, and, moreover, each representative contained in~$\D$ is of minimal norm in its residue class.
\end{definition}
%%%%%%%%%%%%%%%%%%%%%%%%%%%%%%%%%%%%%%%%%%%%%%%%%%

%%%%%%%%%%%%%%%%%%%%%%%%%%%%%%%%%%%%%%%%%%%%%%%%%%
\begin{example}\label{E:MinRepDigSets_Penney+Eisenstein}
Let us find the minimal norm representative digit sets for our two favourite bases:
\begin{description}
    \item[$\beta = \ii - 1$ and $W =3$] \qquad As $\beta^3 = 2 + 2 \ii$, there are $N(\beta^3) = (\beta \overline{\beta})^3 = 8$ residual classes modulo~$\beta^3$ with representatives of minimal norm: $0, \pm 1, \pm \ii, 2, \ii \pm 1$. Three of these representatives $2 =  \beta \overline{\beta}$, $\ii + 1 = -\ii \beta$ and $\ii - 1 = \beta$ are non-zero and divisible by $\beta$. Hence
        $$\D = \{0,\pm 1, \pm \ii\} \, .$$ 
    \item[$\beta = \omega - 1$ and $W =2$] \qquad As $\beta^2 = -3 \omega$, there are $N(\beta^2) = (\beta \overline{\beta})^2 = 9$ residual classes modulo~$\beta^2$. Two representatives of minimal norm, namely $\pm(\omega - 1)$, are non-zero and divisible by~$\beta$. The remaining seven representatives form the digit set
        $$\D = \{0,\pm 1, \pm \omega, \pm \omega^2\} \, .$$
\end{description}
\end{example}
%%%%%%%%%%%%%%%%%%%%%%%%%%%%%%%%%%%%%%%%%%%%%%%%%%

The minimal norm representative digit sets $\D$ presented in Example~\ref{E:MinRepDigSets_Penney+Eisenstein} are closed under multiplication, and they also have the cardinality required by Theorem~\ref{thm:paralel} for parallel addition. Another exceptional property of these digit sets follows from the result of C.~Heuberger and D.~Krenn, see Theorem~9.1 and Corollary~8.1 in~\cite{HK2013}.

%%%%%%%%%%%%%%%%%%%%%%%%%%%%%%%%%%%%%%%%%%%%%%%%%%
\begin{theorem}\label{thm:MinimalHamming}
Let $(\beta, \D)$  be a numeration system  and $s \in \mathbb{N}$, where 
\begin{enumerate}
    \item either $\beta = \ii - 1$, $\D = \{0, \pm 1, \pm \ii\}$ and $W = 3$, 
    \item  or $\beta = \omega - 1$, $\D = \{0, \pm 1, \pm \omega, \pm \omega^2\}$ and $W = 2$.
\end{enumerate} 
Then every $x \in \mathbb{Z}[\beta]$ has $W$-NAF representation in $(\beta, \D)$ and this representation has the minimal Hamming weight among all representations of~$x$ in $(\beta, \D)$.     
\end{theorem}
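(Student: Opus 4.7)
The plan is to prove the theorem in three stages: (i) existence of a $W$-NAF representation for every $x \in \Z[\beta]$, (ii) uniqueness of this $W$-NAF, and (iii) its optimality with respect to Hamming weight. Stages (i) and (ii) are routine; the substance lies in (iii).

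For existence I would run a greedy algorithm. Given non-zero $x$: if $\beta \mid x$, write $x = \beta y$ and emit digit $0$; otherwise, by \cref{de:minimalNorm} there is a unique non-zero $d \in \D$ with $x \equiv d \pmod{\beta^W}$, and I emit $d$ followed by $W-1$ zeros and recurse on $y = (x - d)/\beta^W \in \Z[\beta]$. The $W$-non-adjacency of the output is automatic, and termination follows from $|y| \leq (|x| + C)/|\beta|^W$ with $C = \max_{d \in \D}|d|$: since $|\beta|^W > 2C$ (one checks $2\sqrt{2} > 2$ in system~(1) and $3 > 2$ in system~(2)) this iteration strictly contracts for large $|x|$, reducing termination to a finite check near $0$. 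The same kind of case analysis proves uniqueness: the lowest digit $d_0$ of any $W$-NAF of $x$ is forced to be $0$ if $\beta \mid x$, and otherwise the unique non-zero representative of $x \pmod{\beta^W}$ in $\D$ (because in a $W$-NAF a non-zero $d_0$ is followed by $W-1$ zeros); induction on length then pins down all higher digits.

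The main obstacle is optimality. I would run an exchange argument. Among all $\D$-representations of $x$ with minimum Hamming weight, pick one $\sigma = (d_k)$ which, subject to weight-minimality, is extremal for some fixed refinement — say, lexicographically maximal when the set of non-zero positions is listed in decreasing order. I claim $\sigma$ must be the $W$-NAF. Otherwise it contains two non-zero digits at positions $j < j + r$ with $1 \leq r < W$ and no non-zero digit strictly between them. After factoring out $\beta^j$ the local obstruction is $p = d_j + d_{j+r}\beta^r \in \Z[\beta]$. I aim to exhibit a $\D$-representation of $p$ whose Hamming weight is either $\leq 1$, or equals $2$ with its non-zero positions lexicographically greater than $\{0,r\}$. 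Granted that the substitution can be absorbed into $\sigma$ without colliding with other non-zero digits (which requires a zero-buffer argument, the one further technical detail to secure), plugging in such a local rewriting contradicts either the weight-minimality or the lexicographic extremality of $\sigma$.

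The case analysis needed here reduces drastically by the multiplicative closure of $\D$: since $d_j^{-1}\D = \D$, one may normalize $d_j = 1$. For system~(1) with $W = 3$ there remain only the eight patterns $1 + d\beta^r$ with $d \in \{\pm 1, \pm \ii\}$ and $r \in \{1,2\}$; for system~(2) with $W = 2$ there remain the six patterns $1 + d\beta$ with $d \in \{\pm 1, \pm \omega, \pm \omega^2\}$. For each one computes the value in $\Z[\beta]$ and reads off its $W$-NAF via the existence algorithm above, verifying the required weight-and-position inequality case by case. Completing this finite check is the technical heart of the proof; once done, the descent yields the desired contradiction and establishes that the $W$-NAF achieves the minimum Hamming weight among all $\D$-representations of $x$.
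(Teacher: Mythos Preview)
The paper does not prove this theorem; it is quoted from Heuberger--Krenn~\cite{HK2013} (Theorem~9.1 and Corollary~8.1), and the paper's later machinery in fact relies on it (see the one-line proof of \cref{le:nezaporneCesty}). So there is no in-paper argument to compare against.

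Your existence and uniqueness sketches are standard and fine. The exchange argument for optimality is the right idea, but the ``zero-buffer argument'' you postpone is not a side detail: as written, the local substitution may land on an already-occupied position and produce a sum such as $1+\ii$ or $2$ that is not a digit, and resolving the ensuing carries is precisely the difficulty. The clean fix is to specify \emph{which} violation you rewrite: take the close pair $(j,j+r)$ with $j+r$ maximal. Then any non-zero position above $j+r$ lies at distance $\geq W$ from it (otherwise it would form a higher violation), so positions $j{+}r{+}1,\dots,j{+}r{+}W{-}1$ are guaranteed zero. Your finite check must now verify not only that each $1+d\beta^{r}$ admits a weight-$\leq 2$ representation, but that one can be chosen with support in $\{0,\dots,r+W-1\}$, so it fits in this buffer. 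Carrying the check out shows this holds in both systems; the tightest case is $1+\ii\beta^{2}=3$ in system~(1), whose $3$-NAF $\underline{1}\,0\,0\,0\,\underline{1}$ reaches position $4=r+W-1$ exactly. With this choice of violation the substitution never collides, the weight does not increase, and the sorted-decreasing position sequence strictly increases, contradicting the assumed lexicographic maximality of~$\sigma$. Without specifying the highest violation, the carry propagation you would need to handle collisions is essentially the full transducer analysis the paper carries out in \cref{sub:graphG}.
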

%%%%%%%%%%%%%%%%%%%%%%%%%%%%%%%%%%%%%%%%%%%%%%%%%%

Third exceptional property of the two numeration systems follows from the results \cite{LS2019} and~\cite{FrPaPeSv}:

%%%%%%%%%%%%%%%%%%%%%%%%%%%%%%%%%%%%%%%%%%%%%%%%%%
\begin{theorem}
Let $(\beta, \D)$  be a numeration system, where 
\begin{enumerate}
    \item either $\beta = \ii - 1$ and $\D = \{0, \pm 1, \pm \ii \}$,
    \item or $\beta = \omega - 1$ and $\D = \{0,\pm 1, \pm \omega, \pm \omega^2\}$. 
\end{enumerate}
Then the numeration system $(\beta, \D)$ allows parallel addition and on-line multiplication. 
\end{theorem}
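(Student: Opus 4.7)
The plan is to establish the two properties — parallel addition and on-line multiplication — separately, with parallel addition doing most of the work. First I would check the necessary cardinality bound supplied by \cref{thm:paralel}: in Case~(1), the minimal polynomial $f(X)=X^2+2X+2$ gives $|f(1)|=5=\#\D$, and in Case~(2), $f(X)=X^2+3X+3$ gives $|f(1)|=7=\#\D$. So in both cases the digit set sits exactly at the minimum cardinality at which parallel addition is even conceivable, which is encouraging; but this is only a necessary condition, and an explicit parallel adder still has to be constructed.

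For parallel addition, the plan is to build a local conversion function $\phi$ that, applied to a doubly-infinite sequence of digits in the extended alphabet $\D+\D$ (obtained from pointwise addition of two $(\beta,\D)$-representations), returns an equivalent $(\beta,\D)$-representation in which every output digit depends only on a bounded window of input digits. Concretely, I would enumerate the finitely many $s\in(\D+\D)\setminus\D$ and, for each, use the minimal polynomial relation $\beta^2=-2\beta-2$ (respectively $\beta^2=-3\beta-3$), together with multiplication by units of~$\D$, to rewrite~$s$ as a $\D$-combination absorbed into neighbouring positions. The core obstacle is genuine \emph{locality}: a naive substitution cascades, since rewriting a digit $\pm 2$ or $\pm 2\ii$ typically re-introduces out-of-alphabet values into its neighbours, and one has to exploit the full redundancy of~$\D$ (and its closure under multiplication by units) to design rewriting rules whose cumulative action on any window stabilises in a fixed number of steps. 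In practice this reduces to constructing a finite-state transducer reading digits from the most-significant end and verifying, by finite case analysis, that every reachable state emits a digit in~$\D$.

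Once a bounded-window parallel adder is in place, on-line multiplication follows in the spirit of Trivedi and Ercegovac~\cite{TrEr}. Because $|\beta|>1$ in both cases ($|\beta|=\sqrt{2}$ and $\sqrt{3}$ respectively), the unseen tail of the operands contributes a geometrically contracting correction to the partial product. After reading the top~$n$ digits of both operands, one forms single-digit partial products — which are carry-free, as $\D$ is closed under multiplication in both systems — and accumulates them using the parallel adder from the previous paragraph; a bounded look-ahead then suffices to commit the $n$-th output digit within a constant delay. The hard part of the whole theorem is therefore concentrated in designing and verifying the bounded-window parallel adder; once that is secured, the on-line multiplier is a systematic consequence.
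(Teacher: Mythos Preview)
The paper does not actually prove this theorem in the text: it is stated as a known fact with the sentence ``Third exceptional property of the two numeration systems follows from the results \cite{LS2019} and~\cite{FrPaPeSv},'' and no further argument is given. So there is no in-paper proof to compare against; the paper simply imports the result from the literature on parallel addition and on-line algorithms.

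Your sketch is consistent with the methodology of those references: verify the cardinality bound of \cref{thm:paralel}, build a bounded-window rewriting (parallel adder) on the extended alphabet $\D+\D$, and then layer Trivedi--Ercegovac on-line multiplication on top using closure of~$\D$ under multiplication. That is the right architecture, and you correctly locate the genuine difficulty in the locality of the adder. What you have written, however, remains a plan rather than a proof: the sentence ``in practice this reduces to constructing a finite-state transducer \ldots\ and verifying, by finite case analysis, that every reachable state emits a digit in~$\D$'' is exactly the part that \cite{LS2019} and \cite{FrPaPeSv} carry out, and it is not short --- the carry-absorption rules must be exhibited and termination checked. If you intend this as a self-contained proof you would need to supply those rules (or at least the state set and transition table of the adder); if you intend it as the paper does, a citation suffices and your outline is more than the paper itself provides.
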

%%%%%%%%%%%%%%%%%%%%%%%%%%%%%%%%%%%%%%%%%%%%%%%%%%

When $x \in \Z[\beta]$ can be expressed in the numeration system $(\beta, \D)$ as $x = \sum_{k=0}^N d_k \beta^k$ with $d_k \in \D$, we denote it by $(x)_{\beta, \D} = d_N d_{N-1} \cdots d_0$. If the numeration system is clear from the context, we identify~$x$ with the string, and write simply $x = d_N d_{N-1} \cdots d_0$.

However, we are interested especially in such representations that contain the minimal number of non-zero digits. The frequency of non-zero digits appearing in $W$-NAF representations for our two numeration systems can be found in~\cite{HK2013b}. Let us first specify the term {\it frequency} for a given numeration system $(\beta, \D)$, allowing $W$-NAF representation for each element $x \in \Z[\beta]$.

Let us consider $N \in \N$, and denote by $S(N)$ the number of all $x \in \Z[\beta]$ whose $W$-NAF representation is a string of length~$\leq N$, and denote by $R(N)$ the number of non-zero digits in $W$-NAF representations of all such numbers~$x$. Then the limit $\lim\limits_{N \to \infty} \frac{R(N)}{N S(N)}$, if it exists, is called the {\it frequency} of non-zero digits in the $W$-NAF representation, and we denote it by~$f_{\neq 0}(\beta, \mathcal{D})$.   

%%%%%%%%%%%%%%%%%%%%%%%%%%%%%%%%%%%%%%%%%%%%%%%%%%
\begin{proposition} \qquad
\begin{enumerate}
    \item If $\beta = \ii - 1$ and $\D = \{0, \pm 1, \pm \ii\}$, then the frequency $f_{\neq 0}(\beta, \D)$ of non-zero digits of $3$-NAF representations equals~$\frac14$.  
    \item If $\beta = \omega - 1$ and $\D = \{0, \pm 1, \pm \omega, \pm \omega^2\}$, then the frequency $f_{\neq 0}(\beta, \D)$ of non-zero digits of $2$-NAF representations equals~$\frac25$.   
\end{enumerate}    
\end{proposition}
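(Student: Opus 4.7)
The plan is to encode valid $W$-NAF strings as paths in a finite automaton and extract the density of non-zero digits from the Perron--Frobenius data of the corresponding transfer matrix. I take the state set $\{0, 1, \ldots, W-1\}$, where state $j$ records the number of consecutive zeros immediately preceding the current position (capped at $W-1$); from state $j < W-1$ the only admissible next digit is $0$ (moving to state $j+1$), whereas from state $W-1$ one may either append $0$ (staying in $W-1$) or any of the $\#\D - 1$ non-zero digits (falling back to state $0$). Since prepending leading zeros to a $W$-NAF preserves the $W$-NAF condition, there is a bijection between $\{x \in \Z[\beta] : \text{the } W\text{-NAF of } x \text{ has length} \le N\}$ and valid strings of length exactly $N$, whence $S(N) = \mathbf{1}^{T} M^{N} e_{W-1}$ with transfer matrices
\[
M^{(1)} = \begin{pmatrix} 0 & 0 & 4 \\ 1 & 0 & 0 \\ 0 & 1 & 1 \end{pmatrix}, \qquad
M^{(2)} = \begin{pmatrix} 0 & 6 \\ 1 & 1 \end{pmatrix}
\]
for the Gaussian and Eisenstein cases respectively. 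Their characteristic polynomials factor as $\lambda^{3} - \lambda^{2} - 4 = (\lambda - 2)(\lambda^{2} + \lambda + 2)$ and $\lambda^{2} - \lambda - 6 = (\lambda - 3)(\lambda + 2)$, so the Perron eigenvalues $\lambda = 2$ and $\lambda = 3$ are simple and strictly dominant in modulus.

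For $R(N)$ I would introduce $\vec b_{N} = (b_{N}^{(0)}, \ldots, b_{N}^{(W-1)})^{T}$, where $b_{N}^{(j)}$ sums the Hamming weights of all valid length-$N$ paths ending in state $j$. Splitting off the contribution of the last appended digit gives the affine recurrence
\[
\vec b_{N+1} = M \vec b_{N} + E \vec a_{N}, \qquad \vec a_{N} := M^{N} e_{W-1},
\]
where $E_{ij}$ counts non-zero digits taking state $j$ to state $i$; in our setting $E$ has the single non-zero entry $E_{0,\, W-1} = \#\D - 1$. The ansatz $\vec b_{N} = c\, N \lambda^{N} r + O(\lambda^{N})$, with $r$ the right Perron eigenvector, is self-consistent iff the linear equation fixing the subleading correction admits a solution; by the Fredholm alternative its right-hand side must be orthogonal to the left Perron eigenvector $l$, which pins down the constant $c$. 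Dividing $R(N) \sim c N \lambda^{N} \mathbf{1}^{T} r$ by $S(N) \sim (l^{T} e_{W-1} / l^{T} r)(\mathbf{1}^{T} r)\lambda^{N}$, the initial-data constants cancel and the frequency collapses to
\[
f_{\neq 0}(\beta, \D) = \lim_{N \to \infty} \frac{R(N)}{N\, S(N)} = \frac{l^{T} E r}{\lambda \, l^{T} r}.
\]

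The last step is purely computational. Solving for the Perron eigenvectors yields, in the Gaussian case, $r = (2, 1, 1)^{T}$ and $l = (1, 2, 4)$, hence $l^{T} E r = 4$, $l^{T} r = 8$, and the limit equals $\tfrac{4}{2 \cdot 8} = \tfrac{1}{4}$; in the Eisenstein case, $r = (2, 1)^{T}$ and $l = (1, 3)$, hence $l^{T} E r = 6$, $l^{T} r = 5$, and the limit equals $\tfrac{6}{3 \cdot 5} = \tfrac{2}{5}$. The principal obstacle is conceptual rather than computational: one must justify that leading-zero padding preserves the $W$-NAF property (so the transfer matrix genuinely counts $S(N)$), and one must verify the spectral gap (so that the subdominant eigenvalues, of moduli $\sqrt 2$ and $2$ respectively, contribute only $O(\lambda^{N})$ to $\vec b_{N}$, which is negligible after division by $N S(N)$).
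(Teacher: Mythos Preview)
The paper does not prove this proposition; immediately before stating it, the authors write that ``the frequency of non-zero digits appearing in $W$-NAF representations for our two numeration systems can be found in~[HK2013b]'' and simply quote the values $\tfrac14$ and $\tfrac25$ from Heuberger--Krenn.

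Your argument is correct and gives a self-contained proof. The automaton with states $\{0,\ldots,W-1\}$ recording the number of zeros since the last non-zero digit (capped at $W-1$) does generate exactly the $W$-NAF strings, and the padding-by-leading-zeros bijection between $\{x:\text{$W$-NAF length}\le N\}$ and valid strings of length~$N$ is legitimate because the $W$-NAF representation is unique (the digit sets here are minimal norm representative digit sets, cf.\ Definition~\ref{de:minimalNorm} and Theorem~\ref{thm:MinimalHamming}). Your derivation of the formula $f_{\neq 0}=\dfrac{l^{T}Er}{\lambda\,l^{T}r}$ via the ansatz $\vec b_N=cN\lambda^N r+O(\lambda^N)$ and the Fredholm solvability condition is the standard resonance computation, and the eigenvector calculations check out: $(r,l)=((2,1,1)^T,(1,2,4))$ gives $\tfrac{4}{2\cdot 8}=\tfrac14$, and $(r,l)=((2,1)^T,(1,3))$ gives $\tfrac{6}{3\cdot 5}=\tfrac25$.

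What your approach buys, compared with merely citing the literature, is a short and transparent argument that makes the mechanism visible: the frequency is the expected cost per step under the Perron measure on the de~Bruijn-type automaton. The Heuberger--Krenn paper treats general imaginary quadratic bases and arbitrary width~$W$, so their framework is broader, but for the two concrete systems at hand your direct transfer-matrix computation is both shorter and more explicit.
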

%%%%%%%%%%%%%%%%%%%%%%%%%%%%%%%%%%%%%%%%%%%%%%%%%%

It seems clear that the base $\beta = \ii - 1$ should be more advantageous for representing complex numbers. We have to consider, however, that the length of the $W$-NAF string representing $x \in \Z[\beta]$ is approximately $\log_{|\beta|} |x|$. Hence, the $W$-NAF representation of~$x$ has $f_{\neq 0} (\beta, \D) \times \log_{|\beta|} |x|$ non-zero digits on average.
\begin{itemize}
    \item For system $\beta = \ii - 1$ with $\D = \{0, \pm 1, \pm \ii\}$, we thus have the value $V_{{\ii - 1}} = \frac14 \frac{2} {\ln 2} \ln |x|$.
    \item For system $\beta = \omega - 1$ with $\D = \{0,\pm 1, \pm \omega, \pm \omega^2\}$, we have the value $V_{\omega - 1} = \frac25 \frac{2}{\ln 3} \ln |x|$. 
\end{itemize}
As the ratio $V_{\omega - 1} / V_{\ii -1} \approx 1.0094876$ is bigger than~$1$, the system $\beta = \ii - 1$ with $\D = \{0, \pm 1, \pm \ii\}$ can be considered just slightly more effective with respect to the average number of non-zero digits.

%%%%%%%%%%%%%%%%%%%%%%%%%%%%%%%%%%%%%%%%%%%%%%%%%%%%%%%%%%%%%%%%%%%%%%%%%%%%%%%%%%%%%%%%%%%%%%%%%%%%
%%%%%%%%%%%%%%%%%%%%%%%%%%%%%%%%%%%%%%%%%%%%%%%%%%%%%%%%%%%%%%%%%%%%%%%%%%%%%%%%%%%%%%%%%%%%%%%%%%%%
\section{Representations of Gaussian Integers}\label{sec:Penney}
%%%%%%%%%%%%%%%%%%%%%%%%%%%%%%%%%%%%%%%%%%%%%%%%%%%%%%%%%%%%%%%%%%%%%%%%%%%%%%%%%%%%%%%%%%%%%%%%%%%%

%%%%%%%%%%%%%%%%%%%%%%%%%%%%%%%%%%%%%%%%%%%%%%%%%%%%%%%%%%%%%%%%%%%%%%%%%%%%%%%%%%%%%%%%%%%%%%%%%%%%

%%%%%%%%%%%%%%%%%%%%%%%%%%%%%%%%%%%%%%%%%%%%%%%%%%%%%%%%%%%%%%%%%%%%%%%%%%%%%%%%%%%%%%%%%%%%%%%%%%%%

In this section, we consider the numeration system with base $\beta = \ii-1$ and digit set $\D = \{0, \pm 1, \pm \ii\}$. For easier notation, we adopt the following convention:

\smallskip

\noindent {\bf Convention:}
\begin{itemize}
    \item For complex conjugation of a number~$x$, we have the usual notation~$\overline{x}$.
    \item Besides, in digit strings representing a number~$x$ in the numeration system $(\beta, \D)$, we shall denote the digits $-1$ and $-\ii$ by $\underline{1}$ and $\underline{\ii}$, respectively.
\end{itemize}
For instance, number $x = 1$ is represented by strings $1$, $\underline{\ii} \underline{\ii}$, $\ii 0 \underline{1}$, or $\underline{\ii} 0 1 0 \underline{\ii} 0 \underline{1} 0 \ii 0 1 0 \underline{\ii} 0 \underline{1}$.

\smallskip

In the sequel, we describe a transducer which converts an arbitrary $(\beta, \D)$-representation of a Gaussian integer $x \in \Z[\ii]$ into its $3$-NAF representation in $(\beta, \D)$. The construction of this transducer is well known; we provided it here because it serves as the starting point for our main task -- namely to generate optimal representations of~$x$ from its $3$-NAF representation. Let us recall that a~$(\beta, \D)$-representation of a Gaussian integer $x \in \Z[\ii]$ is optimal if its number of non-zero digits is minimal among all $(\beta, \D)$-representations of~$x$ -- i.e., it is equal to the number of non-zero digits in the $3$-NAF representation of~$x$.

%%%%%%%%%%%%%%%%%%%%%%%%%%%%%%%%%%%%%%%%%%%%%%%%%%%%%%%%%%%%%%%%%%%%%%%%%%%%%%%%%%%%%%%%%%%%%%%%%%%%
\smallskip
\
\subsection{Transducer converting $(\beta,\D)$-representations to 3-NAF}\label{sS:transducer-construction}
\
\medskip
%%%%%%%%%%%%%%%%%%%%%%%%%%%%%%%%%%%%%%%%%%%%%%%%%%%%%%%%%%%%%%%%%%%%%%%%%%%%%%%%%%%%%%%%%%%%%%%%%%%%

A $(\beta, \D)$-representation of a Gaussian integer $x = \sum_{k=0}^N x_k \beta^k$ is considered as an infinite string of the form $\cdots 0 0 0 x_{N} x_{N-1} \cdots x_1 x_0$ with $x_j \in \D$ for all $j \in \N$. We define a transducer, which starts reading the string from its right end to the left.

The transducer has 21 states, as shown in Figure~\ref{fig:Q-points}, which we summarize into the set~$Q \subset \Z[\beta]$: 
\begin{equation}\label{states}
  Q = \{0, \pm 1, \pm \ii, \pm 1 \pm \ii, \pm 2, \pm 2\ii, \pm 1 \pm 2\ii, \pm 2 \pm \ii \} \, . 
\end{equation}

%%%%%%%%%%%%%%%%%%%%%%%%%%%%%%%%%%%%%%%%%%%%%%%%%%
\begin{figure}
    \centering
    \begin{tikzpicture}[scale=0.6]
        \draw[gray,thin] (-2.5,-2.5) grid (2.5,2.5);
        \draw[<->,ultra thick] (-2.5,0) -- (2.5,0) node[right]{Re};
        \draw[<->,ultra thick] (0,-2.5) -- (0,2.5) node[above]{Im};
        % [0]
        \filldraw[blue] (0,0) circle (4pt);
        % [1]
        \filldraw[blue] (1,0) circle (4pt);
        \filldraw[blue] (-1,0) circle (4pt);
        \filldraw[blue] (0,1) circle (4pt);
        \filldraw[blue] (0,-1) circle (4pt);
        % [1+\ii]
        \filldraw[blue] (1,1) circle (4pt);
        \filldraw[blue] (-1,1) circle (4pt);
        \filldraw[blue] (1,-1) circle (4pt);
        \filldraw[blue] (-1,-1) circle (4pt);
        % [2]
        \filldraw[blue] (2,0) circle (4pt);
        \filldraw[blue] (-2,0) circle (4pt);
        \filldraw[blue] (0,2) circle (4pt);
        \filldraw[blue] (0,-2) circle (4pt);
        % [2+\ii]
        \filldraw[blue] (1,2) circle (4pt);
        \filldraw[blue] (2,1) circle (4pt);
        \filldraw[blue] (1,-2) circle (4pt);
        \filldraw[blue] (2,-1) circle (4pt);
        \filldraw[blue] (-1,2) circle (4pt);
        \filldraw[blue] (-2,1) circle (4pt);
        \filldraw[blue] (-1,-2) circle (4pt);
        \filldraw[blue] (-2,-1) circle (4pt);
    \end{tikzpicture}
    \caption{The set $Q$ of all 21~states of the transducer for converting any $(\beta, \D)$-representation into $3$-NAF representation, with $\beta = \ii-1$ and $\D = \{0, \pm 1, \pm \ii\}$.}
    \label{fig:Q-points}
\end{figure}
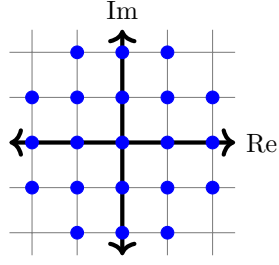
%%%%%%%%%%%%%%%%%%%%%%%%%%%%%%%%%%%%%%%%%%%%%%%%%%

Obviously, the set of states~$Q$ has numerous convenient symmetries: it is invariant under multiplication by $\ii$ and under complex conjugation -- i.e., $\ii Q = Q = \overline{Q}$, and also $\overline{\beta} Q = \beta Q$. Construction of the automaton is based on the following property of the set of states~$Q$:

%%%%%%%%%%%%%%%%%%%%%%%%%%%%%%%%%%%%%%%%%%%%%%%%%%
\begin{lemma}\label{prepis}
Let $Q$ be the set of states as defined in~\eqref{states}, and let $w \in Q + \D$ with $w \notin \beta Q$. Then, for every pair $b,c \in \D $, there exists a unique pair $d \in \D \setminus\{0\}$ and $q' \in Q$ such that   
$$w + b\beta + c\beta^2 = d+ \beta^3\, q'.$$
\end{lemma}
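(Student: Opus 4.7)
The plan is to fix $v := w + b\beta + c\beta^2$, determine $d$ as the unique element of $\D\setminus\{0\}$ congruent to $v$ modulo $\beta^3$, and then verify that $q' := (v - d)/\beta^3$ automatically lies in $Q$. The argument splits into three steps, with the third being the main obstacle.

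First I would handle existence and uniqueness of $d$. By Example~\ref{E:MinRepDigSets_Penney+Eisenstein}, the quotient $\Z[\beta]/\beta^3\Z[\beta]$ has $8$ residue classes; four of them are divisible by $\beta$ (including the zero class), while the remaining four have minimal-norm representatives $\pm 1, \pm \ii$. Hence a unique $d \in \{\pm 1, \pm \ii\} = \D\setminus\{0\}$ with $v \equiv d \pmod{\beta^3}$ exists, provided that $v$ itself is not divisible by $\beta$. Since $v \equiv w \pmod{\beta}$, it suffices to show $w \notin \beta\Z[\beta]$. For this I would prove the auxiliary equality
\[
(Q + \D) \cap \beta\Z[\beta] = \beta Q,
\]
so that the hypothesis $w \notin \beta Q$ forces $w \notin \beta\Z[\beta]$. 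The inclusion $\beta Q \subseteq Q + \D$ is established by writing each $\beta q$ with $q \in Q$ explicitly as a sum in $Q + \D$, and the reverse inclusion is a finite check, substantially reduced by the symmetries $Q = \ii Q = \overline{Q}$ and $\D = \ii\D = \overline{\D}$.

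The hard part is the final step: showing $q' = (v - d)/\beta^3 \in Q$. A direct lattice count shows that $Q$ is precisely the set of Gaussian integers of norm at most $5$, so it suffices to prove $|v - d|^2 \leq |\beta|^6 \cdot 5 = 40$. The crude triangle-inequality bound $|v - d| \leq \sqrt 5 + 1 + \sqrt 2 + 2 + 1 \approx 7.65$ squares to roughly $58$, so a pure norm estimate is insufficient; the key is that $d$ is the forced residue representative rather than a freely chosen digit, which substantially constrains $|v - d|$. I would close this step by a finite case analysis over $w \in (Q + \D)\setminus \beta Q$ and $(b, c) \in \D^2$, again pruned by the fourfold symmetries that act simultaneously on $w, b, c, d, q'$. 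Only the combinations in which $v$ lies far from all of $\pm 1, \pm \ii$ require attention, and in each remaining case the specific $d$ and resulting $q'$ can be computed and checked to satisfy $|q'|^2 \leq 5$. Uniqueness of $q'$ is then automatic, since $q'$ is determined by $v$ and the already-unique $d$ through the formula $q' = (v - d)/\beta^3$.
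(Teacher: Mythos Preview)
Your approach is correct and, in outline, amounts to the same finite verification the paper performs---its entire proof is the single line ``By inspection of all pairs $b,c\in\D$''---but you supply considerably more conceptual scaffolding. Two of your observations are genuinely useful additions: first, the residue-class argument isolating $d$ cleanly separates existence/uniqueness of the digit from the harder question of whether $q'$ lands in~$Q$; second, your identification of $Q$ with $\{z\in\Z[\ii]:|z|^2\le 5\}$ recasts ``$q'\in Q$'' as a single norm inequality $|v-d|^2\le 40$, which is both easier to check mechanically and explains \emph{why} the set $Q$ has the shape it does. One small improvement to your write-up: the inclusion $(Q+\D)\cap\beta\Z[\ii]\subseteq\beta Q$ that you need does not require a finite check either---if $z\in Q+\D$ then $|z|\le\sqrt5+1$, so $|z/\beta|^2\le(3+\sqrt5)<6$, and since $|z/\beta|^2$ is an integer it is at most~$5$, i.e.\ $z/\beta\in Q$. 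What neither your argument nor the paper's avoids is the residual case analysis for the bound $|v-d|^2\le 40$; you correctly diagnose that the crude triangle inequality overshoots, and the symmetry reduction you propose is the natural way to make the remaining enumeration manageable.
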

%%%%%%%%%%%%%%%%%%%%%%%%%%%%%%%%%%%%%%%%%%%%%%%%%%
\begin{proof}
By inspection of all pairs $b, c \in \D$.

\end{proof}
%%%%%%%%%%%%%%%%%%%%%%%%%%%%%%%%%%%%%%%%%%%%%%%%%%

%%%%%%%%%%%%%%%%%%%%%%%%%%%%%%%%%%%%%%%%%%%%%%%%%%
\begin{definition}\label{D:transducer}

The transducer converting $(\beta, \D)$-representation to $3$-NAF is defined as follows:

The initial state of the transducer is $q_0 = 0$. 

If the transducer is in a state $q \in Q$ and reads on input the digit $a \in \D$ such that $a + q = q' \beta \in \beta Q$, then it moves to the state~$q'$ and writes the digit~$0$ on output.

If $a + q \notin \beta Q$, then the transducer reads additional two digits $cb$ on input. By Lemma \ref{prepis}, the number $q + a + b \beta + c \beta^2 = q' \beta^3 + d$ for some state $q' \in Q$ and some digit $d \in \{\pm 1, \pm \ii \}$. In this case, the transducer moves to the state~$q'$ and writes on output the triplet of digits~$00d$. 

\end{definition}
%%%%%%%%%%%%%%%%%%%%%%%%%%%%%%%%%%%%%%%%%%%%%%%%%%

%%%%%%%%%%%%%%%%%%%%%%%%%%%%%%%%%%%%%%%%%%%%%%%%%%
\begin{theorem}\label{th:transducer}
Let $\D = \{0, \pm 1,\pm \ii \}$ and $\beta = \ii - 1$. The transducer described in Definition~\ref{D:transducer} assigns to the input $\cdots 0 0 0 x_{N} x_{N-1} \cdots x_1 x_0$ with $N \in \N$ and $x_k \in \D$ for all $k \in \N$ the $3$-NAF representation of the number $x = \sum_{k=0}^N x_k \beta^k$.  
\end{theorem}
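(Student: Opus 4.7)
The plan is to verify three properties in turn: correctness of the value produced by the transducer, the $3$-NAF property of the output, and termination of the transducer on a finite input.

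First I would set up a natural loop invariant and establish it by induction on the number $k$ of input positions already processed. Write $V^{in}_k = \sum_{j=0}^{k-1} x_j \beta^j$ and $V^{out}_k = \sum_{j=0}^{k-1} y_j \beta^j$ for the partial values of the read and written digit strings after $k$ positions, and let $q_k$ denote the state of the transducer at that moment. The invariant is
\[
V^{in}_k \;=\; V^{out}_k + q_k \beta^k.
\]
It holds at $k=0$ since $q_0 = 0$ and both sums are empty. For the inductive step, Case~1 of Definition~\ref{D:transducer} consumes one input digit $a = x_k$ with $q_k + a = q_{k+1}\beta$ and outputs $y_k = 0$, yielding the invariant at $k+1$. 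Case~2 consumes three digits $a,b,c = x_k, x_{k+1}, x_{k+2}$ and, by Lemma~\ref{prepis}, produces the block $00d$ together with the update $q_k + a + b\beta + c\beta^2 = d + q_{k+3}\beta^3$, yielding the invariant at $k+3$. Granted termination in state $0$, the invariant then says that the output represents~$x$.

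The $3$-NAF property is immediate from the structure of the transducer: the only source of a nonzero output digit is Case~2, which always writes the triple $00d$ and simultaneously advances the read head by three positions. Hence a nonzero digit $y_k = d$ is necessarily accompanied at positions $k+1,k+2$ by two forced zeros, and the next possibly-nonzero digit can occur no earlier than position $k+3$. Any window of three consecutive output digits therefore contains at most one nonzero digit. For termination, the input has only finitely many nonzero digits, so from some step on the transducer reads only zeros. Restricting to $a = 0$, Case~1 sends $q \mapsto q/\beta$ with $|q/\beta| = |q|/\sqrt{2}$, while Case~2 sends $q \mapsto (q-d)/\beta^3$ with $|(q-d)/\beta^3| \le (|q|+1)/(2\sqrt{2})$. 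Both maps contract strictly outside a neighbourhood of $0$, and since $Q$ is finite the state must reach the absorbing value $q = 0$ after finitely many steps, from which only zeros are emitted; alternatively, this can be checked by direct inspection of the 21 states of $Q$.

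The only mild obstacle I expect is careful bookkeeping in the Case~2 branch, where three input positions and three output positions advance in lockstep, and in particular checking that the $3$-NAF window property survives at the seam between a Case~1 step and a subsequent Case~2 step. Once it is observed that the Case~2 output is always of the form $00d$ and that Case~1 emits only a single $0$, this seam argument is routine, and the heavy combinatorial lifting (the existence and uniqueness of the pair $(d,q')$ in the Case~2 update) has already been absorbed into Lemma~\ref{prepis}.
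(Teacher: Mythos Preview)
Your proof is correct and follows essentially the same approach as the paper: the loop invariant $V^{in}_k = V^{out}_k + q_k\beta^k$ is exactly the paper's equation~\eqref{mezikrok} (rewritten), the $3$-NAF argument matches the paper's condition~\eqref{mezikrok2}, and termination is shown in both cases by strict decrease of $|q|$ once only zeros are read. The paper is slightly more explicit in the termination step, splitting Case~2 into the subcases $|q|=1$ and $|q|=\sqrt{5}$, but your uniform bound $|q'|\le(|q|+1)/(2\sqrt{2})$ already yields $|q'|<|q|$ for every nonzero $q\in Q$, so nothing is missing.
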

%%%%%%%%%%%%%%%%%%%%%%%%%%%%%%%%%%%%%%%%%%%%%%%%%%
\begin{proof} 
Let us show that the transducer produces a string $\cdots 0 0 0 y_M y_{M-1} \cdots y_1 y_0$ of digits $y_j \in \D$, which is the $3$-NAF representation of the number $x = \sum_{k=0}^N x_k \beta^k$.   

The construction of the transducer implies that, in each state we visit during the calculation, the number of digits read on the input equals the number of digits written on the output - i.e., either 1~digit or 3~digits. We show that, if $q_n \in Q$ is the state after reading $n$~input digits and writing $n$~output digits, then   
\begin{equation}\label{mezikrok}
x = \sum_{k=0}^{n-1} y_k \beta^k + q_n \beta^n + \sum_{k=n}^N x_k \beta^k \, ,
\end{equation}
where $y_{n-1} y_{n-2} \cdots y_0$ is the string written on output (so far). Moreover, for each $k \in \N$, $k < n$, the string satisfies  
\begin{equation}\label{mezikrok2}
\text{if}\ \ y_k\neq 0, \ \text{then} \ k\leq n-3\ \ \text{and} \ \ y_{k+1} = y_{k+2} = 0 \, .
\end{equation}

We proceed by induction on~$n$. For $n=0$, there is nothing to prove, as the initial state is $q_0 = 0$, and thus $x = q_0 + \sum_{k=0}^N x_k \beta^k$. Next, let us assume that the claims \eqref{mezikrok} and~\eqref{mezikrok2} are satisfied for $n \in \N$. The transducer is in state~$q_n$ and reads the next new input digit~$x_n$. 

If $q_n + x_n \in \beta Q$, then, by definition of the transducer, the new state $q_{n+1} \in Q$ satisfies $q_{n+1} \beta = x_{n} + q_n$ and the new output digit is $y_n = 0$. Hence
$$x = \sum_{k=0}^{n-1} y_k \beta^k + \underbrace{ (q_n + x_n)}_{q_{n+1} \beta} \beta^n + \sum_{k=n+1}^N x_k \beta^k = \sum_{k=0}^{n-1} y_k \beta^k + 0 \cdot \beta^{n} + q_{n+1} \beta ^{n+1} + \sum_{k=n+1}^N x_k \beta^k \, .$$
Thus the equation~\eqref{mezikrok} is satisfied for the new state~$q_{n+1}$ and the output digits $0 y_{n-1} y_{n-2} \cdots y_0$. Clearly, the condition~\eqref{mezikrok2} is satisfied as well. 

If $q_n + x_n \notin \beta Q$, the transducer reads the next two digits $x_{n+2} x_{n+1}$. Then it moves to the state $q_{n+3} \in Q$ and writes output digits $y_n \in \{ \pm 1, \pm \ii \}$, $y_{n+1} = 0$, $y_{n+2} = 0$ such that $q_n + x_{n} + x_{n+1} \beta + x_{n+2} \beta^2 = q_{n+3} \beta^3 + y_{n}$. Therefore,
$$x = \sum_{k=0}^{n-1} y_k \beta^k + \underbrace{(q_n + x_n + x_{n+1} \beta + x_{n+2} \beta^2)}_{q_{n+3} \beta^3 + y_{n}} \beta^n + \sum_{k=n+3}^N x_k \beta^k =$$
$$= \sum_{k=0}^{n-1} y_k \beta^k + y_{n} \beta^{n} + 0 \cdot \beta^{n+1} + 0 \cdot \beta^{n+2}+ q_{n+3} \beta ^{n+3} + \sum_{k=n+3}^N x_k \beta^k \, .$$
The new string on output $00 y_{n} y_{n-1} \cdots y_0$ obviously satisfies both \eqref{mezikrok} and~\eqref{mezikrok2}.

It remains to show that, starting from some $M \in \N$, the digits $y_n$ on output are zero for each $n > M$. It suffices to prove that there exists an index $M > N$ such that the transducer gets into the state $q_M = 0$. If that is the case, then all the next states must be equal to zero, as  $x_j = 0$, and only zeros are written on the output $y_j =0$, on all positions $j > M$.

Assume that $n$~digits, for $n \geq N$, are already written on output and the transducer is in a state $q \neq 0$. We show that the new state $q' \in Q$, to which we get from the initial state $q$, fulfils $|q'| < |q|$. For $n > N$, we read $a = 0$ on input, and thus $w = q + a = q \in Q$. Let us discuss two cases: 
\begin{itemize}
    \item If $q = q' \beta$ for some state $q' \in Q$, then the output digit is $y = 0$, and the new state $q'$ satisfies $|q'| = |\beta|^{-1} |q| = \frac{1}{\sqrt{2}} |q| < |q|$.
    \item If $q \notin \beta Q$, then additional two zeros are read on the input, we move to the new state $q' \in Q$, and we write on output the triplet of digits $00d$ such that $q = q' \beta^3 + d$, where $d \in \{ \pm 1, \pm \ii \}$ and $q' \in Q$. The relation $q \notin \beta Q$ implies that
    \begin{itemize}
        \item either $q \in \{ \pm 1, \pm \ii \}$, and then $d = q$ and $q' = 0$, so clearly $|q'| = 0 < 1 = |q|$;
        \item otherwise $q \in \{ \pm 1 \pm 2\ii, \pm 2 \pm \ii \}$, so $|q| = \sqrt{5}$ and $|q - d| \leq \sqrt{10}$ for any $d \in \D$, therefore $|q'| = |\beta|^{-3} |q - d| \leq \frac{1}{2\sqrt{2}}\sqrt{10} = \frac{1}{2}\sqrt{5} = \frac{1}{2}|q| < |q|$.
    \end{itemize}
\end{itemize}
Since the number of states $q \in Q$ is finite, it is clear that, after a finite number of steps, the transducer reaches the state $q_M = 0$ for some $M > N$.

In a summary, the transducer produces the output string of the form $\cdots 0 0 0 y_M y_{M-1} \cdots y_1 y_0$ for some $M > N$, with $y_j \in \D$ for each index~$j$. Fulfilment of the properties \eqref{mezikrok} and~\eqref{mezikrok2} guarantees that $y_M y_{M-1} \cdots y_1 y_0$ is the $3$-NAF representation of~$x$.   
\end{proof}
%%%%%%%%%%%%%%%%%%%%%%%%%%%%%%%%%%%%%%%%%%%%%%%%%%

%%%%%%%%%%%%%%%%%%%%%%%%%%%%%%%%%%%%%%%%%%%%%%%%%%%%%%%%%%%%%%%%%%%%%%%%%%%%%%%%%%%%%%%%%%%%%%%%%%%%
\smallskip
\
\subsection{Transducer  as an Oriented Graph}\label{sub:graphG}
\
\medskip
%%%%%%%%%%%%%%%%%%%%%%%%%%%%%%%%%%%%%%%%%%%%%%%%%%%%%%%%%%%%%%%%%%%%%%%%%%%%%%%%%%%%%%%%%%%%%%%%%%%%

The finite transducer introduced in Definition~\ref{D:transducer} is usually interpreted as an oriented graph with labeled edges. 

%%%%%%%%%%%%%%%%%%%%%%%%%%%%%%%%%%%%%%%%%%%%%%%%%%
\begin{definition}\label{D:graph-G}
Let $\beta = \ii - 1$ and $\D = \{0, \pm 1, \pm \ii\}$. We denote by $G = (Q, E)$ the graph representing the transducer converting  $(\beta, \D)$-representations to $3$-NAF representations. The set~$Q$ of states of the transducer stands for the set of vertices of~$G$, while the state $q = 0 \in Q$ is the initial vertex. The set~$E$ of edges is defined as follows: \begin{enumerate}
    \item If $a + q = q' \beta \in \beta Q$, then $e = (q, q') \in E$ and the label of the edge~$e$ is $a\,|\,0$. The digits $a$ and~$0$ are denoted $In(e)$ and $Out(e)$, respectively, representing the input and the output. 
    \item If $c \beta^2 + b \beta + a + q = q' \beta^3 + d \in \beta^3 Q + \{\pm 1, \pm \ii\}$, then $e = (q, q') \in E$ and the label of the edge~$e$ is $cba\,|\,00d$. The triplets $cba$ and $00d$ are denoted $In(e)$ and $Out(e)$, respectively, again representing the input and the output. 
\end{enumerate}
An oriented path~$P$ of length~$\ell$ in the graph~$G$ is a sequence $q_0 e_1 q_1 e_2 q_2 e_3 q_3 \cdots e_{\ell} q_{\ell}$ such that $e_k = (q_{k-1}, q_k) \in E$ for each $k = 1, 2, \ldots, \ell$. 
\end{definition}
%%%%%%%%%%%%%%%%%%%%%%%%%%%%%%%%%%%%%%%%%%%%%%%%%%

%%%%%%%%%%%%%%%%%%%%%%%%%%%%%%%%%%%%%%%%%%%%%%%%%%
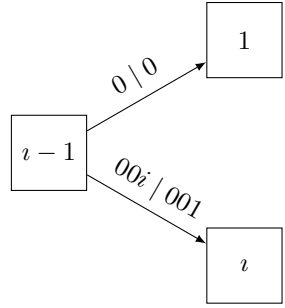
\begin{figure}
    \centering
    \begin{tikzpicture}[scale=1.5]
        \node[shape=rectangle,draw=black,minimum size=10mm] (i) at (0,0) {\(\ii \)};
        \node[shape=rectangle,draw=black,minimum size=10mm] (one) at (0,2) {\(1\)};
        \node[shape=rectangle,draw=black,minimum size=10mm] (i-minus-one) at (-{sqrt(3)},1) {\(\ii -1\)};
        \path[-latex] (i-minus-one) edge node[above,sloped] {$0 \mid 0$} (one);
        \path[-latex] (i-minus-one) edge node[above,sloped] {$00i \mid 001$} (i);
    \end{tikzpicture}
    \caption{Example of edges and their labels in graph~$G$ from Definition~\ref{D:graph-G} representing the transducer converting general $(\beta, \D)$-representations into $3$-NAF representation, with $\beta = \ii - 1$ and $\D = \{0, \pm 1, \pm \ii \}$.}\label{Fig:edgesInG} 
\end{figure}
%%%%%%%%%%%%%%%%%%%%%%%%%%%%%%%%%%%%%%%%%%%%%%%%%%

%%%%%%%%%%%%%%%%%%%%%%%%%%%%%%%%%%%%%%%%%%%%%%%%%%
\begin{remark}\label{re:cestaG}
Transformation of a general $(\beta, \D)$-representation $x_{N} x_{N-1} \cdots x_1 x_0$ of $x \in \Z[\ii]$ into its $3$-NAF representation corresponds to an oriented path $q_0 e_1 q_1 e_2 q_2 e_3 q_3 \cdots e_{\ell} q_{\ell}$ in $G$, where 
\begin{itemize}
    \item $q_0 = 0$ and $q_{\ell} = 0$;
    \item $In(e_\ell) In(e_{\ell -1}) \cdots In(e_{0})$ coincides with the $(\beta, \D)$-representation $x_N \cdots x_1 x_0$ of~$x$ (up to possible leading zeros);   
    \item $Out(e_\ell) Out(e_{\ell -1}) \cdots Out(e_{0})$ is the $3$-NAF representation of~$x$ in $(\beta, \D)$.
\end{itemize}
On the other hand, any oriented path in~$G$ starting and ending in~$q_0 = 0$ corresponds to transformation of some representation of a Gaussian integer into its $3$-NAF representation.  
\end{remark}
%%%%%%%%%%%%%%%%%%%%%%%%%%%%%%%%%%%%%%%%%%%%%%%%%%

Note that the usual notation of a path in a graph is from left to right - i.e., starting on the left with the initial vertex and ending on the right with the final vertex of the path, which is the opposite to the direction how representations of numbers are read by the transducer - i.e., from the right to the left end of the representation.

Let us recall that the set of states~$Q$ and the digit set~$\D$ are invariant under multiplication by~$\ii$ and under complex conjugation. Moreover, $\overline{\beta} = \ii \beta $. These facts imply a number of symmetries in the graph~$G$. 

%%%%%%%%%%%%%%%%%%%%%%%%%%%%%%%%%%%%%%%%%%%%%%%%%%
\begin{lemma}\label{le:HranoveSym}
Let $G = (Q, E)$ be the graph described in Definition~\ref{D:graph-G}. Let $e = (q, q') \in E$ have the label $a\,|\,0$, and let $f = (p, p') \in E$ have the label $cba\,|\,00d$. Then
\begin{enumerate}
    \item $(\ii q, \ii q') \in E$ and its label is $(\ii a)\,|\,0$; 
    \item $(\overline{q}, \ii \overline{q'}) \in E$ and its label is $\overline{a}\,|\,0$;
    \item $(\ii p, \ii p') \in E$ and its label is $(\ii c)(\ii b)(\ii a)\,|\,00(\ii d)$;
    \item $(\overline{p}, \overline{\ii p'}) \in E$ and its label is $(-\overline{c})(\ii \overline{b}) \overline{a}\,|\,00\overline{d}$.
\end{enumerate}    
\end{lemma}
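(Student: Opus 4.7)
The plan is to reduce all four statements to the single algebraic identity $\overline{\beta}=\ii\beta$ (which holds because $\beta=\ii-1$ gives $\overline{\beta}=-1-\ii=\ii(\ii-1)$), together with the symmetries $\ii Q=Q$, $\overline{Q}=Q$, $\ii\D=\D$, $\overline{\D}=\D$, and the derived fact that $\beta Q$ is invariant under both multiplication by $\ii$ and complex conjugation. From $\overline{\beta}=\ii\beta$ one immediately gets $\overline{\beta}^{2}=-\beta^{2}$ and $\overline{\beta}^{3}=-\ii\beta^{3}$, which will govern all the sign/phase twists in the labels.

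For parts (1) and (3), I would simply multiply the defining identities of the edges by $\ii$. For a type-$1$ edge $(q,q')$ with label $a\,|\,0$ the equation $a+q=q'\beta$ becomes $\ii a+\ii q=(\ii q')\beta$, and since $\ii$ sends $Q$ to $Q$, $\D$ to $\D$, and $\beta Q$ to $\beta Q$, this is exactly the defining condition of a type-$1$ edge from $\ii q$ to $\ii q'$ with label $(\ii a)\,|\,0$. For a type-$2$ edge the analogous multiplication converts $c\beta^{2}+b\beta+a+p=p'\beta^{3}+d$ into $(\ii c)\beta^{2}+(\ii b)\beta+(\ii a)+\ii p=(\ii p')\beta^{3}+(\ii d)$; one must also verify that $\ii p+\ii a\notin\beta Q$, which follows from $p+a\notin\beta Q$ by $\ii$-invariance of $\beta Q$, and that $\ii d\in\{\pm1,\pm\ii\}$, which is clear.

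For parts (2) and (4), I would apply complex conjugation to the defining identities and then substitute $\overline{\beta}=\ii\beta$. In the type-$1$ case, $a+q=q'\beta$ conjugates to $\overline{a}+\overline{q}=\overline{q'}\cdot\overline{\beta}=(\ii\overline{q'})\beta$, giving the edge $(\overline{q},\ii\overline{q'})$ with label $\overline{a}\,|\,0$. In the type-$2$ case, conjugating $c\beta^{2}+b\beta+a+p=p'\beta^{3}+d$ and using $\overline{\beta}^{2}=-\beta^{2}$, $\overline{\beta}^{3}=-\ii\beta^{3}$ yields
\begin{equation*}
(-\overline{c})\beta^{2}+(\ii\overline{b})\beta+\overline{a}+\overline{p}=(-\ii\overline{p'})\beta^{3}+\overline{d},
\end{equation*}
and since $-\ii\overline{p'}=\overline{\ii p'}$, this is precisely the edge condition for $(\overline{p},\overline{\ii p'})$ with label $(-\overline{c})(\ii\overline{b})\overline{a}\,|\,00\overline{d}$. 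The side condition $\overline{p}+\overline{a}\notin\beta Q$ again follows from conjugation-invariance of $\beta Q$ (which itself uses $\overline{\beta}Q=\ii\beta Q=\beta Q$).

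I do not anticipate any real obstacle: the only point that needs genuine care is tracking the signs produced by $\overline{\beta}^{2}=-\beta^{2}$ and $\overline{\beta}^{3}=-\ii\beta^{3}$, which accounts for the asymmetric look of the label in part (4) compared with part (2). Once the identity $\overline{\beta}=\ii\beta$ is in hand, each of the four statements is a one-line verification, and the closure conditions on $Q$, $\D$, and $\beta Q$ handle the remaining bookkeeping.
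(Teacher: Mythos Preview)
Your proposal is correct and follows essentially the same route as the paper: multiply the defining edge equations by $\ii$ for parts (1) and (3), conjugate and substitute $\overline{\beta}=\ii\beta$ (hence $\overline{\beta}^{2}=-\beta^{2}$, $\overline{\beta}^{3}=-\ii\beta^{3}$) for parts (2) and (4). You are in fact slightly more thorough than the paper in explicitly verifying the side condition $a+p\notin\beta Q$ is preserved under both symmetries via the invariance $\ii\beta Q=\beta Q=\overline{\beta Q}$.
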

%%%%%%%%%%%%%%%%%%%%%%%%%%%%%%%%%%%%%%%%%%%%%%%%%%
\begin{proof}
The fact that $e = (q, q') \in E$ has the label $a\,|\,0$ means that $a + q = q' \beta \in \beta Q$. By multiplication with~$\ii$ and by complex conjugation, and due to $\overline{\beta} = \ii \beta$, we get
$$ \text{$\ii a + \ii q = \ii q' \beta $ \quad and \quad $\overline{a} + \overline{q} = \overline{q'} \overline{\beta} = \ii \overline{q'} \beta$ ,}$$
which proves the claims (1) and~(2).

The fact that $f = (p, p') \in E$ has the label $cba\,|\,00d$ means that $c \beta^2 + b \beta + a + p = p' \beta^3 + d \in \beta^3 Q + \{\pm 1, \pm \ii\}$. Hence
$$ \ii c \beta^2 + \ii b \beta + \ii a + \ii p = \ii p' \beta^3 + \ii d \quad \text{and} \quad \overline{c} \overline{\beta^2} + \overline{b} \overline{\beta} + \overline{a} +\overline{p} = -\overline{c} \beta^2 + \ii \overline{b} \beta + \overline{a} + \overline{p} = \overline{p'} \overline{\beta}^3 + \overline{d} = \overline{\ii p'} \beta^3 + \overline{d} ,$$
which proves the claims (3) and~(4). 
\end{proof}
%%%%%%%%%%%%%%%%%%%%%%%%%%%%%%%%%%%%%%%%%%%%%%%%%%

The symmetries of the transducer, as shown above, enable to simplify the description of its graph~$G$ and help to understand its structure.

%%%%%%%%%%%%%%%%%%%%%%%%%%%%%%%%%%%%%%%%%%%%%%%%%%
\begin{definition}
Let $p, q \in Q$ be two vertices in the graph~$G$ representing two states of the transducer. We write
$$ p \sim q \text{\qquad if \  \ either}  \ \  \ p = \ii^k q \quad \text{or} \quad p = \ii^k\, \overline{q} \quad \text{\ for some } k \in \Z .$$    
\end{definition}

%%%%%%%%%%%%%%%%%%%%%%%%%%%%%%%%%%%%%%%%%%%%%%%%%%

Obviously, the relation~$\sim$ is an equivalence on~$Q$. It is easy to see that there exist 5 classes of the equivalence, namely
\begin{equation}\label{E:5EqClasses}
    \begin{split}
        [0] & = \{0\} \\
        [1] & = \{+1, +\ii, -1, -\ii\} \\
        [1+\ii] & = \{+1+\ii, -1+\ii, -1-\ii, +1-\ii\} \\
        [2] & = \{+2, +2\ii, -2, -2\ii\} \\
        [2+\ii] & = \{+2+\ii, -2+\ii, -2-\ii, +2-\ii, +1+2\ii, -1+2\ii, -1-2\ii, +1-2\ii\}
    \end{split}
\end{equation}

%%%%%%%%%%%%%%%%%%%%%%%%%%%%%%%%%%%%%%%%%%%%%%%%%%%%%%%%%%%%%%%%%%%%%%%%%%%%%%%%%%%%%%%%%%%%%%%%%%%%
\smallskip
\
\subsection{Restriction of the Transducer to Accept Only Optimal Representations}
\
\medskip
%%%%%%%%%%%%%%%%%%%%%%%%%%%%%%%%%%%%%%%%%%%%%%%%%%%%%%%%%%%%%%%%%%%%%%%%%%%%%%%%%%%%%%%%%%%%%%%%%%%%

Let us recall that a $(\beta, \D)$-representation of a Gaussian integer $x \in \Z[\ii]$ is called optimal, if the number of non-zero digits in the representation equals the number of non-zero digits in the $3$-NAF representation of~$x$.

%%%%%%%%%%%%%%%%%%%%%%%%%%%%%%%%%%%%%%%%%%%%%%%%%%
\begin{example}\label{ex:priklad2}
The string $1 0 0 \underline{\imath}$ is the $3$-NAF representation of the Gaussian integer $x = 2 + \imath$. It is easy to check that the strings $\imath 0 \imath$ and $\underline{\imath} 1$ also represent the same number~$x$. All the three representations of~$x$ are optimal, as they have the minimal Hamming weight~$3$. 
\end{example}
%%%%%%%%%%%%%%%%%%%%%%%%%%%%%%%%%%%%%%%%%%%%%%%%%%

In this section, we introduce a subgraph~$\G$ of the graph $G$ presented in Definition~\ref{D:graph-G}, which shall transform any optimal representation of an arbitrary Gaussian integer $x \in \Z[\ii]$ into $3$-NAF representation, and, at the same time, none of the non-optimal representations is accepted by this subgraph~$\G$.

%%%%%%%%%%%%%%%%%%%%%%%%%%%%%%%%%%%%%%%%%%%%%%%%%%
\begin{definition}
Let $e = (q, q') \in E$ be an edge of the graph ~$G$ with label $In(e)\,|\,Out(e)$. The number of non-zero digits in $In(e)$ minus the number of non-zero digits in $Out(e)$ is called the weight of~$e$, and denoted by~$w(e)$. The weight~$w(P)$ of an oriented path $P$ in~$G$ is defined as the sum of weights of all edges in~$P$. 
\end{definition}
%%%%%%%%%%%%%%%%%%%%%%%%%%%%%%%%%%%%%%%%%%%%%%%%%%

%%%%%%%%%%%%%%%%%%%%%%%%%%%%%%%%%%%%%%%%%%%%%%%%%%
\begin{remark}\label{re:vahy}
Let $In(e)\,|\,Out(e)$ be the label of an edge $e = (q, q')$ in graph~$G$. 
\begin{enumerate}
   \item If $In(e)\,|\,Out(e) = a\,|\,0$, then obviously $w(e) = |a| \in \{0, 1\}$.
   \item If $In(e)\,|\,Out(e) = cba\,|\,00d$, then $w(e) \in \{-1, 0, 1, 2\}$, as $d \in \{\pm 1, \pm \ii\}$ and $a, b, c \in \D$. Moreover,
    \begin{itemize}
        \item $w(e) = -1$ only if $a = b = c = 0$;
        \item $w(e) = 0$ if exactly two digits among $a, b, c$ are zero; 
        \item $w(e) = 1$ if exactly one  digit among $a, b, c$ is zero; and
        \item $w(e) = 2$ if $a, b, c \neq 0$.
    \end{itemize}
\end{enumerate}
\end{remark}
%%%%%%%%%%%%%%%%%%%%%%%%%%%%%%%%%%%%%%%%%%%%%%%%%%

The fact that the $3$-NAF representation of a Gaussian integer~$x$ contains the minimal number of non-zero digits among all representations of~$x$ can be reformulated as a property of the graph~$G$.   

%%%%%%%%%%%%%%%%%%%%%%%%%%%%%%%%%%%%%%%%%%%%%%%%%%
\begin{lemma}\label{le:nezaporneCesty}
Let $P$ be an oriented path in the graph~$G$ such that $P$~starts and ends in the vertex $q_0 = 0$. Then the weight of~$P$ is non-negative.   
\end{lemma}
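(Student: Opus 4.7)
My plan is to reduce the statement directly to \cref{thm:MinimalHamming} via the interpretation of paths given in \cref{re:cestaG}. The weight function $w(e) = \#\{\text{non-zeros in } In(e)\} - \#\{\text{non-zeros in } Out(e)\}$ is additive along a path, so for any path $P = q_0 e_1 q_1 \cdots e_\ell q_\ell$ we have
\[
w(P) = \sum_{k=1}^{\ell} w(e_k) = \bigl(\text{Hamming weight of } In(e_\ell)\cdots In(e_1)\bigr) - \bigl(\text{Hamming weight of } Out(e_\ell)\cdots Out(e_1)\bigr).
\]
This is the key telescoping observation; no detailed inspection of edge labels is needed for it.

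Next, I would invoke \cref{re:cestaG}. Under the hypothesis that $P$ starts and ends in the vertex $0$, the concatenated input word $In(e_\ell)\cdots In(e_1)$ is (possibly up to leading zeros, which do not affect the Hamming weight) a valid $(\beta,\D)$-representation of some Gaussian integer $x \in \Z[\ii]$, and the concatenated output word $Out(e_\ell)\cdots Out(e_1)$ is the $3$-NAF representation of the same~$x$. Hence $w(P)$ equals the Hamming weight of that particular $(\beta,\D)$-representation of~$x$ minus the Hamming weight of the $3$-NAF representation of~$x$.

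Finally, \cref{thm:MinimalHamming} asserts precisely that the $3$-NAF representation of any $x \in \Z[\ii]$ has the minimal Hamming weight among all $(\beta,\D)$-representations of~$x$. Consequently the difference computed above is non-negative, which gives $w(P) \geq 0$, as required.

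I do not anticipate a serious obstacle here, since all the heavy lifting has already been done: \cref{th:transducer} guarantees that the transducer (and hence the graph~$G$) really does produce the $3$-NAF on any input, \cref{re:cestaG} translates closed paths at~$0$ into pairs (arbitrary representation, $3$-NAF) of the same number, and \cref{thm:MinimalHamming} supplies the minimality. The only subtlety worth mentioning explicitly is that the input string read along $P$ may begin with zeros (the transducer pads the input with leading zeros), but since zero digits contribute neither to the Hamming weight of the input nor to that of the corresponding represented integer, this causes no loss.
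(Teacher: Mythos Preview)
Your proof is correct and follows exactly the same approach as the paper: invoke \cref{re:cestaG} to interpret a closed path at $0$ as converting some $(\beta,\D)$-representation of an $x\in\Z[\ii]$ into its $3$-NAF, and then apply \cref{thm:MinimalHamming} to conclude $w(P)\ge 0$. Your version is simply more explicit about the telescoping identity for $w(P)$ and the harmless leading zeros, but the argument is the same.
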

%%%%%%%%%%%%%%%%%%%%%%%%%%%%%%%%%%%%%%%%%%%%%%%%%%
\begin{proof}
By Remark~\ref{re:cestaG}, every path $P$ in~$G$ which starts and ends in $q_0 = 0$ corresponds to transformation of a general $(\beta, \D)$-representation of a Gaussian integer to its $3$-NAF representation. By Theorem \ref{thm:MinimalHamming}, the weight of~$P$ is non-negative.
\end{proof}
%%%%%%%%%%%%%%%%%%%%%%%%%%%%%%%%%%%%%%%%%%%%%%%%%%

%%%%%%%%%%%%%%%%%%%%%%%%%%%%%%%%%%%%%%%%%%%%%%%%%%
\begin{lemma}\label{le:stejneVahy}
Let $P$ be a path in the graph~$G$ starting in vertex~$u$ and ending in vertex~$v$, and let $u' \in [u]$. Then there exists a path $P'$ in~$G$ of the same length and the same weight as~$P$, such that $P'$~starts in~$u'$ and ends in some $v' \in [v]$.    
\end{lemma}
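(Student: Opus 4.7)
The plan is to construct $P'$ from $P$ by applying the edge symmetries of Lemma~\ref{le:HranoveSym} step by step. Write $u' = \ii^k u$ or $u' = \ii^k \overline{u}$ for some $k \in \Z$; by composing the two kinds of transformations it is enough to handle each case separately.

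\emph{Rotation.} If $u' = \ii^k u$, iterating Lemma~\ref{le:HranoveSym}(1) and~(3) shows that the map $\sigma_k \colon q \mapsto \ii^k q$ is an automorphism of the labeled graph $G$: every edge $e=(q,q')$ maps to an edge $(\ii^k q, \ii^k q')$ whose label is obtained from the label of $e$ by multiplying each input and output digit by $\ii^k$. Applying $\sigma_k$ edgewise to $P$ produces a path of the same length from $\ii^k u = u'$ to $\ii^k v \in [v]$. Multiplying a digit by $\ii^k \in \{\pm 1, \pm \ii\}$ does not affect whether it is zero, so edge weights — and hence $w(P')$ — are preserved.

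\emph{Conjugation.} By composition with the previous case, it suffices to treat $u' = \overline{u}$. I build $P' = r_0 f_1 r_1 \cdots f_\ell r_\ell$ inductively, maintaining a twist factor $\alpha_j \in \{\pm 1, \pm \ii\}$ with $r_j = \alpha_j \overline{q_j}$ and $\alpha_0 = 1$. Given $r_{j-1} = \alpha_{j-1}\overline{q_{j-1}}$, apply Lemma~\ref{le:HranoveSym}(2) if $e_j$ is a short edge, or~(4) if it is a triple edge, to obtain an edge from $\overline{q_{j-1}}$ to $\ii\overline{q_j}$, respectively to $-\ii\overline{q_j}$. A further application of Lemma~\ref{le:HranoveSym}(1) or~(3) with rotation factor $\alpha_{j-1}$ turns this into an edge $f_j$ from $r_{j-1}$ to $r_j := \pm \alpha_{j-1}\ii\,\overline{q_j}$, and I set $\alpha_j := \pm \alpha_{j-1}\ii$. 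All $r_j$ lie in~$Q$ because $Q$ is closed under conjugation and multiplication by~$\ii$. Since each step multiplies $\alpha_j$ by a unit, $\alpha_\ell$ is a power of $\ii$ times $\pm 1$, so $r_\ell = \alpha_\ell \overline{v} \in [v]$.

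\emph{Weights.} Each label of $f_j$ is obtained from the label of $e_j$ by complex conjugation followed by multiplication of individual digits by units in $\Z[\ii]$ (and possible sign changes). All these operations preserve the nullity of each digit, so $w(f_j) = w(e_j)$ for every $j$, and summing yields $w(P') = w(P)$. The main subtlety lies in the conjugate case: one must verify that the cumulative twist $\alpha_j$ introduced by the asymmetry between Lemma~\ref{le:HranoveSym}(2) (factor $\ii$) and~(4) (factor $-\ii$) remains a unit throughout; this is precisely what the recurrence $\alpha_j = \pm \alpha_{j-1}\ii$ guarantees, and it ensures that the terminal vertex lands in $[v]$ rather than in a larger orbit.
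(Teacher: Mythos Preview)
Your proof is correct and follows essentially the same approach as the paper, which simply states that the result follows from Lemma~\ref{le:HranoveSym} because those edge symmetries preserve equivalence classes of the endpoints and preserve weights. You make explicit what the paper leaves implicit: in particular, you carefully track the cumulative twist factor $\alpha_j$ that arises because conjugation sends a short edge $(q,q')$ to $(\overline q,\ii\overline{q'})$ but a triple edge to $(\overline q,-\ii\overline{q'})$, so the conjugation symmetry is not a single graph automorphism and must be applied edge by edge with rotations interleaved; this is a genuine detail the paper elides.
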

%%%%%%%%%%%%%%%%%%%%%%%%%%%%%%%%%%%%%%%%%%%%%%%%%%
\begin{proof}
It follows from Lemma~\ref{le:HranoveSym}, as any transformation of an edge by means of complex conjugation or multiplication with~$\ii$ preserves the equivalence classes of vertices forming the edge, and also preserves the weight of the edge.
\end{proof}
%%%%%%%%%%%%%%%%%%%%%%%%%%%%%%%%%%%%%%%%%%%%%%%%%%

%%%%%%%%%%%%%%%%%%%%%%%%%%%%%%%%%%%%%%%%%%%%%%%%%%
\begin{lemma}\label{le:eliminace}
Let $q_0 e_1 q_1 e_2 q_2 e_3 q_3 \cdots e_{\ell} q_{\ell}$ be a path in the graph~$G$ which transforms an optimal $(\beta, \D)$-representation of $x \in \Z[\ii]$ into the $3$-NAF representation. Let $j, n \in \N, 0 \leq j < n \leq \ell$, $u \in [q_j]$ and $v \in [q_n]$.
Then no path (of any length) in~$G$ starting in~$u$ and ending in~$v$ has weight strictly smaller than the weight of the path $q_{j} e_{j+1} q_{j+1} e_{j+2} q_{j+2} \cdots e_n q_n$.  
\end{lemma}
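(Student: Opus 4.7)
The plan is to argue by contradiction, using Lemma~\ref{le:stejneVahy} to transplant an allegedly cheaper path into the walk we started with, and then invoking Lemma~\ref{le:nezaporneCesty} on a closed walk at $0$ of negative weight.

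Suppose some path $P'$ in~$G$ from $u$ to $v$ has weight strictly less than the weight of the sub-path $R := q_j e_{j+1} q_{j+1} \cdots e_n q_n$. Since $q_j \in [u]$, Lemma~\ref{le:stejneVahy} produces a path $\tilde P$ of the same length and the same weight as $P'$ that starts at $q_j$ and ends at some $v'' \in [v] = [q_n]$. Applying Lemma~\ref{le:stejneVahy} once more to the tail $T := q_n e_{n+1} q_{n+1} \cdots e_\ell q_\ell$, but with its starting vertex $q_n$ replaced by $v'' \in [q_n]$, I obtain a path $T'$ of the same weight as $T$ ending at some vertex in $[q_\ell] = [0] = \{0\}$; hence $T'$ ends at $0$.

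Now splice the initial segment $q_0 e_1 q_1 \cdots e_j q_j$, the path $\tilde P$, and the path $T'$. End-vertices match at $q_j$ and at $v''$, so the concatenation is a genuine walk in~$G$ from $0$ back to $0$, of total weight
$$w(q_0 e_1 \cdots e_j q_j) \,+\, w(P') \,+\, w(T).$$
Since the original path $q_0 e_1 \cdots e_\ell q_\ell$ transforms an optimal $(\beta,\D)$-representation of~$x$ into its $3$-NAF representation, the number of non-zero input digits equals the number of non-zero output digits, so the whole original path has weight~$0$. Consequently $w(q_0 \cdots q_j) + w(T) = -w(R)$, and the spliced closed walk at $0$ has weight $w(P') - w(R) < 0$, contradicting Lemma~\ref{le:nezaporneCesty}.

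The one delicate point is the equivalence-class bookkeeping in the double invocation of Lemma~\ref{le:stejneVahy}: one must arrange the two transplants so that the class-shift introduced at the end of $\tilde P$ (landing at $v'' \in [q_n]$ rather than at $q_n$ itself) is absorbed by re-starting the transformed tail at $v''$. This works precisely because $[q_\ell] = [0] = \{0\}$ is a singleton, which forces the terminal vertex of $T'$ to coincide with $0$ and thus yields a genuine closed walk at the origin on which Lemma~\ref{le:nezaporneCesty} can be applied.
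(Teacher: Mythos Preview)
Your argument is correct and follows essentially the same route as the paper: split the optimal path into a prefix, the middle segment $R$, and a tail; use Lemma~\ref{le:stejneVahy} first to shift the hypothetical cheaper path so that it starts at $q_j$, and then again to shift the tail so that it starts at the new endpoint in $[q_n]$; finally concatenate and invoke Lemma~\ref{le:nezaporneCesty}. Your closing observation that $[0]=\{0\}$ is a singleton, forcing the shifted tail to genuinely terminate at~$0$, is exactly the point that makes the splice valid, and is implicit in the paper's proof.
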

%%%%%%%%%%%%%%%%%%%%%%%%%%%%%%%%%%%%%%%%%%%%%%%%%%
\begin{proof}
As the path $P = q_0 e_1 q_1 e_2 q_2 e_3 q_3 \cdots e_{\ell} q_{\ell}$ transforms an optimal representation to the $3$-NAF representation, its weight must be $w(P) = 0$, and the vertices $q_0 = 0$ and $q_{\ell} = 0$. Let us split the path~$P$ into three parts:
$$
P_1 = q_0 e_1 q_1 e_2 q_2 \cdots e_j q_j \, , \quad 
P_2 = q_{j} e_{j+1} q_{j+1} e_{j+2} q_{j+2} \cdots e_n q_n \, , \quad
P_3 = q_{n} e_{n+1} q_{n+1} e_{n+2} q_{n+2} \cdots e_{\ell} q_{\ell} . 
$$
Clearly, $w(P_1) + w(P_2) + w(P_3) = w(P) = 0$. 

Assume, contrary to what we aim to prove, that there exists a path~$P_2'$ starting in~$u$ and ending in~$v$ such that $w(P_2') < w(P_2)$.

By Lemma \ref{le:stejneVahy}, there exists a path $P''_2$ starting in $q_j \in [u]$ and ending in some vertex $v' \in [v] = [q_n]$ such that $w(P''_2) = w(P'_2)$. Due to the same lemma, there exists a path $P'_3$ starting in~$v'$ and ending in vertex~$q_l = 0$ such that $w(P'_3) = w(P_3)$.  

Since the ending vertex of~$P_1$ coincides with the starting vertex of~$P''_2$, and the ending vertex of~$P_2''$ coincides with the starting vertex of~$P_3'$, the concatenation of the three parts produces a new path $P_1 P''_2 P_3'$ starting and ending in vertex~$q_0 = q_l = 0$. The weight of this new path is $$w(P_1) + w(P''_2) + w(P_3') = w(P_1) + w(P'_2) + w(P_3) < w(P_1) + w(P_2) + w(P_3) = 0 \, ,$$
which is a contradiction with Lemma \ref{le:nezaporneCesty}.
\end{proof}
%%%%%%%%%%%%%%%%%%%%%%%%%%%%%%%%%%%%%%%%%%%%%%%%%%

%%%%%%%%%%%%%%%%%%%%%%%%%%%%%%%%%%%%%%%%%%%%%%%%%%
\begin{definition}
A path $P$ in the graph~$G$ which transforms an optimal representation of a Gaussian integer to its $3$-NAF representation is said to be optimal.
\end{definition}
%%%%%%%%%%%%%%%%%%%%%%%%%%%%%%%%%%%%%%%%%%%%%%%%%%

By application of Lemma~\ref{le:eliminace} onto $n = j+1$, we obtain a simple observation:  

%%%%%%%%%%%%%%%%%%%%%%%%%%%%%%%%%%%%%%%%%%%%%%%%%%
\begin{corollary}\label{le: optimHrany}
Let $e = (p, q)$ and $e' = (p', q')$ be two edges of graph~$G$ such that $p \sim p'$ and $q \sim q'$. If $w(e) < w(e')$, then no optimal path in~$G$ uses the edge~$e'$.\

Let $f = (p, p')$ be an edge of~$G$ such that $p \sim p'$ and $w(f) > 0$. Then no optimal path in~$G$ uses the edge~$f$.  
\end{corollary}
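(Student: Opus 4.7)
The plan is to deduce both statements directly from Lemma~\ref{le:eliminace} by specializing to $n = j+1$, so that the sub-path $q_j e_{j+1} q_{j+1}$ from the lemma consists of a single edge $e_{j+1}$ of weight $w(e_{j+1})$. In both parts I would argue by contradiction, assuming that the offending edge lies on some optimal path.

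For the first statement, suppose the edge $e' = (p', q')$ is traversed by an optimal path $q_0 e_1 q_1 \cdots e_\ell q_\ell$, say as $e' = e_{j+1}$, so that $q_j = p'$ and $q_{j+1} = q'$. The hypotheses $p \sim p'$ and $q \sim q'$ then amount to $p \in [q_j]$ and $q \in [q_{j+1}]$, which are precisely the conditions required to set $u = p$ and $v = q$ in Lemma~\ref{le:eliminace}. But the single edge $e = (p, q)$ is itself a path from $u$ to $v$, of weight $w(e) < w(e') = w(e_{j+1})$, contradicting the minimality assertion of Lemma~\ref{le:eliminace}.

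For the second statement, suppose the edge $f = (p, p')$ with $p \sim p'$ and $w(f) > 0$ appears on some optimal path as $e_{j+1}$. Then $q_j = p$ and $q_{j+1} = p'$, so $q_j \sim q_{j+1}$ and hence $[q_j] = [q_{j+1}]$. Applying Lemma~\ref{le:eliminace} with $u = q_j$ and $v = q_j$ (which is legal because $q_j$ lies in $[q_{j+1}]$), the length-$0$ path consisting of the single vertex $q_j$ has weight $0 < w(f) = w(e_{j+1})$, again contradicting the lemma.

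The only minor point I anticipate as a possible obstacle is whether length-$0$ paths qualify under the phrasing ``path (of any length)'' in Lemma~\ref{le:eliminace}. If one prefers to avoid the empty path, the fix is to concatenate any single outgoing edge from $q_j$ with its image under Lemma~\ref{le:stejneVahy} that brings the endpoint back into $[q_{j+1}] = [q_j]$, producing a genuine non-trivial path whose weight is still strictly below $w(f)$. Either variant yields the desired contradiction, so the argument is essentially a one-line specialization of the previous lemma.
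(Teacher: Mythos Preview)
Your proof is correct and follows exactly the approach the paper intends: the corollary is obtained by specializing Lemma~\ref{le:eliminace} to $n=j+1$, using the single edge $e$ (resp.\ the length-$0$ path) as the competing path. Your concern about length-$0$ paths is unnecessary, since Definition~\ref{D:graph-G} explicitly allows $\ell=0$; the alternative fix you sketch is vague and not needed.
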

%%%%%%%%%%%%%%%%%%%%%%%%%%%%%%%%%%%%%%%%%%%%%%%%%%

In Lemma~\ref{le: kondenzovanyGraf1} we describe vertices of the graph~$G$ which can be visited by an optimal path. For this purpose, we introduce another graph, denoted by~$\Gamma$, operating with the equivalence classes~$[u]$ on the set of states~$Q$:

%%%%%%%%%%%%%%%%%%%%%%%%%%%%%%%%%%%%%%%%%%%%%%%%%%
\begin{definition}\label{D:graph-Gamma}
Let $\Gamma$ be a graph, whose vertices are the equivalence classes~$[u]$ on the set of states~$Q$, as defined in~\eqref{E:5EqClasses}. It means that $\Gamma$~has five vertices: $[0], [1], [1+\ii], [2], [2+\ii]$.
Two vertices $[u], [v]$ of~$\Gamma$ are connected with an edge $\eta = ([u], [v])$ according to the following rules: 
\begin{itemize}
    \item {\bf Case} $\mathbf{[u] \neq [v]}$: An edge $\eta = ([u], [v])$ with weight $w(\eta) = m$ belongs to the graph~$\Gamma$, if
        \begin{itemize}
            \item there exist $u' \in [u]$ and $v' \in [v]$ such that $e' = (u', v')$ is an edge in the graph~$G$ with weight $w(e') = m$, and
            \item there is no edge $e'' = (u'', v'')$ in the graph~$G$ with $u'' \in [u], v'' \in [v]$ and with weight $w(e'') < m$.
        \end{itemize}
    \item {\bf Case} $\mathbf{[u] = [v]}$: A loop $\eta = ([u], [u])$ with weight~$w(\eta) = 0$ belongs to the graph~$\Gamma$, if there exist $u', v' \in [u]$ such that $e' = (u', v')$ is an edge in the graph~$G$ with weight $w(e') = 0$. 
\end{itemize}
\end{definition}
%%%%%%%%%%%%%%%%%%%%%%%%%%%%%%%%%%%%%%%%%%%%%%%%%%

%%%%%%%%%%%%%%%%%%%%%%%%%%%%%%%%%%%%%%%%%%%%%%%%%%
\begin{lemma}\label{lem:kondenzovanyG}
The graph $\Gamma$ introduced in Definition~\ref{D:graph-Gamma} is the graph depicted in Figure~\ref{F:graph-Gamma-condensed}.
\end{lemma}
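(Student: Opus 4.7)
The plan is to verify the claim by exhaustive case analysis on the $25$ ordered pairs of equivalence classes, using the symmetries of Lemma~\ref{le:HranoveSym} to collapse the work to five canonical starting vertices.

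First, I would fix one representative for each class, say $u \in \{0, 1, 1+\ii, 2, 2+\ii\}$. Lemma~\ref{le:HranoveSym} shows that the action of the group generated by $z \mapsto \ii z$ and $z \mapsto \overline{z}$ on the vertex set $Q$ lifts to a weight-preserving action on the edges of $G$ that sends equivalence classes of sources to equivalence classes of sources, and equivalence classes of targets to equivalence classes of targets. Consequently, for any $u' \in [u]$, the multiset of pairs $(w(e), [\text{target of } e])$ attained by outgoing edges $e$ of $u'$ depends only on $[u]$, not on the particular representative $u'$. Hence it is enough to enumerate outgoing edges from each of the five canonical representatives.

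Second, for each canonical $u$, I enumerate outgoing edges of $G$ by running through every input digit $a \in \D$. If $a + u = q'\beta \in \beta Q$, this produces a Type~1 edge $(u, q')$ of label $a \mid 0$ and weight $|a| \in \{0, 1\}$. Otherwise, for each of the $25$ choices $(b, c) \in \D \times \D$, Lemma~\ref{prepis} yields a unique Type~2 edge $(u, q')$ of label $cba \mid 00d$, whose weight is read off from Remark~\ref{re:vahy}. I tabulate each such edge as a triple $(u, [q'], w(e))$.

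Third, for each ordered pair $([u], [v])$ I extract from the table the minimum weight $m$ attained with source $u$ and target in $[v]$. If $[u] \neq [v]$, Definition~\ref{D:graph-Gamma} declares an edge $([u], [v])$ of weight $m$ in $\Gamma$. If $[u] = [v]$, a loop at $[u]$ of weight $0$ is placed in $\Gamma$ precisely when some tabulated edge with source $u$ and target in $[u]$ has weight $0$. Reading off $\Gamma$ from the table and comparing with Figure~\ref{F:graph-Gamma-condensed} finishes the argument.

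The only real obstacle is bookkeeping: each canonical vertex contributes at most $5$ Type~1 and up to $5 \cdot 25$ Type~2 outgoing edges, so a few hundred triples must be computed and sorted by source class, target class, and weight. There are no conceptual subtleties beyond the unique decomposition of Lemma~\ref{prepis} and the weight-preserving symmetries of Lemma~\ref{le:HranoveSym}; once the table is assembled, the verification against the figure is mechanical.
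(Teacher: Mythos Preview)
Your proposal is correct: the symmetry reduction via Lemma~\ref{le:HranoveSym} is valid (the edge symmetries act transitively on each source class $[u]$ while preserving weights and target classes), and the exhaustive enumeration you describe will certainly produce the graph $\Gamma$ and confirm it matches Figure~\ref{F:graph-Gamma-condensed}.

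The paper takes a different route. Rather than tabulating all outgoing edges and extracting minima, it organises the argument into three parts: (i) for pairs $([u],[v])$ with no edge in the figure, it proves directly that no edge in $G$ can exist between those classes, using magnitude estimates such as $|u'+a| = |v'||\beta|$ or $|u'+c\beta^2+b\beta+a| \geq |v'||\beta^3|-1$; (ii) for each edge drawn in the figure, it exhibits one explicit witness in $G$ (the content of Table~\ref{fig:kondenzG}); (iii) for each drawn edge of weight $m$, it rules out any edge of weight $<m$ in $G$ between the same classes, again by magnitude arguments. What this buys is that the non-existence and minimality claims are handled by short analytical inequalities rather than by sifting through hundreds of triples; the paper never has to list more than one edge per class pair. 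Your approach, by contrast, front-loads all the work into a single mechanical tabulation and then reads everything off at once. Both are complete; yours is more uniform and easier to verify by computer, the paper's is shorter on the page and gives some insight into \emph{why} certain edges cannot exist or cannot have smaller weight.
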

%%%%%%%%%%%%%%%%%%%%%%%%%%%%%%%%%%%%%%%%%%%%%%%%%%
\begin{proof}

Firstly, we need to show for every pair of vertices $[u]$ and~$[v]$ which are not connected by any edge in the graph in Figure~\ref{F:graph-Gamma-condensed}, that there exist no $u' \in [u]$ and $v' \in [v]$ connected by some edge in the graph~$G$.

\begin{itemize}
    
    \item There is no edge in~$\Gamma$ starting in $[u] \in \{[0], [1]\}$ and ending in $[v] \in \{[2], [2+\ii]\}$. If there existed any edge $e = (u', v')$ in~$G$ with $u' \in [u]$ and $v' \in [v]$, then either $u' + a = v' \beta$, or $u' + c \beta^2 + b \beta + a = v' \beta^3 + d$ for some $a, b, c \in \D$ and $d \in \{\pm 1, \pm \ii\}$. The first possibility gives $2 \geq |u' + a|= |v'| |\beta| \geq 2\sqrt{2}$ -- a contradiction. If the first possibility does not occur, then $|u' + a| = 1$, and hence $\sqrt{17} \geq |u' + c \beta^2 + b \beta + a| = |v' \beta^3 + d| \geq 5$ -- so again a contradiction.

    \item Only vertices $[0]$ and $[1]$ can have a loop. A vertex~$[u]$ has a loop in~$\Gamma$, only if there exists an edge $(u', v')$ of weight~$0$ in~$G$ with $u', v' \in [u]$:
    \begin{itemize}
        \medskip
        
        \item either with a label $0\,|\,0$, in this case $u' = v' \beta$, which implies $u' = 0$ due to $|u'| = |v'|$;

        \medskip
        
        \item or with a label $cba\,|\,00d$, where $u' + a \notin \beta Q$ and exactly one digit in~$cba$ being non-zero, in this case $u' + c \beta^2 + b \beta + a = v' \beta^3 + d$. We use again the fact that $|u'| = |v'|$.\
        
        If $a \neq 0$, then $b = c = 0$, consequently $|u'| + 1 \geq |u' + a| = |v' \beta^3 + d| \geq |v'| |\beta^3| - 1 = 2 \sqrt{2} |u'| - 1 \, ,$ and thus $|u'| \leq \frac{2}{2\sqrt{2}-1} < \sqrt{2}$.  Therefore, $u' \in [0]\cup [1]$.

        If $a = 0$, then $u' + a \notin \beta Q$ implies $u' \in [1] \cup [2+\ii]$. Due to $|c| + |b| = 1$, we get the inequality $|u'| + 2 \geq |u' + c \beta^2 + b \beta| = |v' \beta^3 + d| \geq |v'| |\beta^3| - 1 = 2 \sqrt{2} \,|u'| - 1 \, ,$ which forces $|u'| \leq \frac{3}{2\sqrt{2}-1} < \sqrt{5} = |2 + \ii|$, and hence $u' \notin [2+\ii]$.

    \end{itemize}
\end{itemize}

Secondly, we have to prove for any edge $\eta = ([u], [v])$ with label~$m$ in the graph depicted in Figure~\ref{F:graph-Gamma-condensed}, that there exists an edge $e = (u',v')$ in graph~$G$ with weight $w(e) = m$. All such edges are listed in Table~\ref{fig:kondenzG}. 

%%%%%%%%%%%%%%%%%%%%%%%%%%%%%%%%%%%%%%%%%%%%%%%%%%
\begin{table}[h]
    \centering
    \setlength{\tabcolsep}{3pt}
    \renewcommand{\arraystretch}{1.2}
    \begin{tabular}{|c|c|c|c|c|c|c|}
    \hline
    vertex $[u]$ & vertex $[v]$ & $u'\in [u]$ & $v'\in [v]$ & equation fulfilled by & label in $G$ & weight \\
    in $\Gamma$ & in $\Gamma$ & in $G$ & in $G$ & $e = (u',v')$ in $G$ &  $In(e)\,|\,Out(e)$ & $w(e)$ \\
    \hline \hline
    $[0]$ & $[0]$ & $0$ & $0$ & $u' = v' \beta$ & $0 \,|\, 0$ & $0$ \\
    \hline
    $[0]$ & $[1]$ & $0$ & $1$ & $u' + \ii\beta^2 + \ii = v' \beta^3 - \ii$ & $\ii 0 \ii \,|\, 0 0 \underline{\ii}$ & $1$ \\
    \hline
    $[0]$ & $[1+\ii]$ & $0$ & $-1+\ii$ & $u' - \ii\beta^2 - 1 = v' \beta^3 + 1$ & $\underline{\ii} 0 \underline{1} \,|\, 0 0 1$ & $1$ \\
    \hline
    $[1]$ & $[0]$ & $1$ & $0$ & $u' = v' \beta^3 + 1$ & $0 0 0 \,|\, 0 0 1$ & $-1$ \\
    \hline
    $[1]$ & $[1]$ & $1$ & $1$ & $u' - \beta^2 = v' \beta^3 - 1$ & $\underline{1} 0 0 \,|\, 0 0 \underline{1}$ & $0$ \\
    \hline
    $[1]$ & $[1+\ii]$ & $1$ & $1-\ii$ & $u' + \ii\beta^2 = v' \beta^3 - 1$ & $\ii 0 0 \,|\, 0 0 \underline{1}$ & $0$ \\
    \hline
    $[1+\ii]$ & $[0]$ & $1+\ii$ & $0$ & $u' - \ii = v' \beta^3 + 1$ & $0 0 \underline{\ii} \,|\, 0 0 1$ & $0$ \\
    \hline
    $[1+\ii]$ & $[1]$ & $-1+\ii$ & $1$ & $u' = v' \beta$ & $0 \,|\, 0$ & $0$ \\
    \hline
    $[1+\ii]$ & $[2]$ & $1+\ii$ & $2$ & $u' - \beta^2 - \ii\beta + 1 = v' \beta^3 - 1$ & $\underline{1} \underline{\ii} 1 \,|\, 0 0 \underline{1}$ & $2$ \\
    \hline
    $[1+\ii]$ & $[2+\ii]$ & $1+\ii$ & $2+\ii$ & $u' - \beta^2 - \ii\beta + \ii = v' \beta^3 - \ii$ & $\underline{1} \underline{\ii} \ii \,|\, 0 0 \underline{\ii}$ & $2$ \\
    \hline
    $[2]$ & $[0]$ & $2$ & $0$ & $u' - 1 = v' \beta^3 + 1$ & $0 0 \underline{1} \,|\, 0 0 1$ & $0$ \\
    \hline
    $[2]$ & $[1]$ & $2$ & $1$ & $u' + \ii = v' \beta^3 - \ii$ & $0 0 \ii \,|\, 0 0 \underline{\ii}$ & $0$ \\
    \hline
    $[2]$ & $[1+\ii]$ & $2\ii$ & $1+\ii$ & $u' + \ii = v' \beta^3 - \ii$ & $0 0 \ii \,|\, 0 0 \underline{\ii}$ & $0$ \\
    \hline
    $[2]$ & $[2+\ii]$ & $2\ii$ & $2+\ii$ & $u' - \beta^2 - \ii\beta + 1 = v' \beta^3 - \ii$ & $\underline{1} \underline{\ii} 1 \,|\, 0 0 \underline{\ii}$ & $2$ \\
    \hline
    $[2+\ii]$ & $[0]$ & $2+\ii$ & $0$ & $u' - \ii\beta^2 = v' \beta^3 + \ii$ & $\underline{\ii} 0 0 \,|\, 0 0 \ii$ & $0$ \\
    \hline
    $[2+\ii]$ & $[1]$ & $2+\ii$ & $1$ & $u'=  v' \beta^3-\ii$ & $0 0 0 \,|\, 0 0 \underline{\ii}$ & $-1$ \\
    \hline
    $[2+\ii]$ & $[1+\ii]$ & $2+\ii$ & $1-\ii$ & $u' - \beta = v' \beta^3 - 1$ & $0 \underline{1} 0 \,|\, 0 0 \underline{1}$ & $0$ \\
    \hline
    $[2+\ii]$ & $[2]$ & $2+\ii$ & $2$ & $u' - \beta^2 - \ii\beta = v' \beta^3 - 1$ & $\underline{1} \underline{\ii} 0 \,|\, 0 0 \underline{1}$ & $1$ \\
    \hline
    \end{tabular}
\smallskip
\caption{List of all edges of graph~$\Gamma$ introduced by Definition \ref{D:graph-Gamma}, for $\beta = \ii - 1$ and $\D = \{0, \pm 1, \pm \ii\}$.}\label{fig:kondenzG}
\end{table}
%%%%%%%%%%%%%%%%%%%%%%%%%%%%%%%%%%%%%%%%%%%%%%%%%%

Lastly, to complete the proof, we show for any pair $[u], [v]$ of vertices connected by an edge of weight~$m$ in~$\Gamma$ that no pair $u' \in [u]$ and $v' \in [v]$ is connected in~$G$ by any edge of weight smaller than~$m$. By Remark~\ref{re:vahy}, the weight of any edge in~$G$ belongs to $\{-1, 0,1,2\}$. 

\begin{itemize}

    \item  Only two edges in~$\Gamma$ have the weight~$-1$, namely $([1], [0])$ and $([2+\ii], [1])$. Indeed, the label of an edge of weight~$-1$ must necessarily have the form $000\,|\,00d$ with $|d|=1$. The relation $u' = v' \beta^3 + d$ implies $\sqrt{5} \geq |u'| = |v' \beta^3 + d| \geq |v'| |\beta^3| - 1 = 2\sqrt{2} |v'| - 1 \, ,$ and hence $|v'| \leq \frac{1 + \sqrt{5}}{2\sqrt{2}} < \sqrt{2}$, so $v' \in [0] \cup [1]$. If $v' = 0$, then the equation $u' = v' \beta^3 + d$ forces $u' \in [1]$. If $v' \in [1]$, then $u' = v' \beta^3 + d$ forces $u' \in [2+\ii]$.

    \item Each edge in~$\Gamma$ starting in $[u] \in [1+\ii]$ and ending in $[v] \in \{[2], [2+\ii]\}$ has weight~$2$. We show that any edge $e = (u', v')$ in~$G$ with $u' \in [u]$ and $v' \in [v']$ has a label of the form $cba\,|\,00d$ with all digits $c, b, a, d$ being non-zero. In order to deduce it, we use the fact that $|u'| = \sqrt{2}$ and $|v'| \geq 2$. Indeed, 
    \begin{itemize}
        
        \item $a\,|\,0$ cannot be the label of~$e$, since $u' + a = v' \beta$ implies $\sqrt{2} + 1 \geq |u' + a| = |v' \beta| \geq 2\sqrt{2}$, which is a contradiction;
        
        \item $cba\,|\,00d$ having one of the digits $c, b, a$ zero cannot be the label of~$e$, as $u' + c \beta^2 + b \beta + a = v' \beta^3 + d$ would imply $\sqrt{2} + \sqrt{10} \geq |u'| + |c \beta^2 + b \beta + a| \geq |u' + c \beta^2 + b \beta + a| = |v' \beta^3 + d| \geq 4 \sqrt{2} - 1$, so again a contradiction.
   
    \end{itemize}

    \item No edge in~$G$ of weight smaller than~$2$ starts in $u' \in [2]$ and ends in $v' \in [2+\ii]$. Considering a label $a\,|\,0$, we obtain the relation $3 \geq |u' + a| = |v' \beta| = \sqrt{10}$, so a contradiction. Considering a label $cba\,|\,00d$ with at most two digits among $c, b, a$ being non-zero, we obtain the relation $\sqrt{26} \geq |u' + c \beta^2 + b \beta + a| = |v' \beta^3 + d| \geq |v'| |\beta^3| - 1 = 2\sqrt{10} - 1$, again a contradiction.

    \item We show that no edge in~$G$ of weight~$0$ starts in $u' = 0$ and ends in $v' \in [1] \cup [1+\ii]$.  If such an edge existed, then by Remark~\ref{re:vahy}, its label is $cba\,|\,00d$ and at most one digit among $c, b, a$ is non-zero. The relation $u' + c \beta^2 + b \beta + a = v' \beta^3 + d$ implies $2 \geq |0 + c \beta^2 + b \beta + a| = |v'\beta^3+d| \geq \sqrt{5}$, a contradiction.

    \item The edge $([2+\ii], [2])$ has weight~$1$ in~$\Gamma$. We show that no edge in~$G$ of weight~$0$ starts in $u' \in [2+\ii]$ and ends in $v' \in [2]$. If such an edge existed, then its label could not be $a\,|\,0$, since necessarily $a = 0$, but $\sqrt{5} = |2+\ii| = |u' + a| \neq |v' \beta| = 2 \sqrt{2}$. Considering a label $cba\,|\,00d$ with at most one digit among $c, b, a$ being non-zero, we obtain the relation $\sqrt{17} \geq |u' + c \beta^2 + b \beta + a| = |v' \beta^3 + d| \geq |v'| |\beta^3| - 1 \geq 4\sqrt{2} - 1$, a contradiction.

\end{itemize}
\end{proof}
%%%%%%%%%%%%%%%%%%%%%%%%%%%%%%%%%%%%%%%%%%%%%%%%%%

%%%%%%%%%%%%%%%%%%%%%%%%%%%%%%%%%%%%%%%%%%%%%%%%%%
\begin{figure}
    \centering
    \begin{tikzpicture}[scale=2]
        \node[shape=circle,draw=black,minimum size=13mm] (zero) at (0,0) {\([0]\)};
        \node[shape=circle,draw=black,minimum size=13mm] (one) at (2,0) {\([1]\)};
        \node[shape=circle,draw=black,minimum size=13mm] (two) at (4,0) {\([2]\)};
        \node[shape=circle,draw=black,minimum size=13mm] (i-plus-one) at (1,-{sqrt(3)}) {\([1+\ii]\)};
        \node[shape=circle,draw=black,minimum size=13mm] (i-plus-two) at (3,-{sqrt(3)}) {\([2+\ii]\)};
        \path[-latex]
            (zero) edge[bend right=10] node[left] {$+1$} (i-plus-one)
            (i-plus-one) edge[bend right=10] node[right] {$0$} (zero)
            (zero) edge[bend right=10] node[below] {$+1$} (one)
            (one) edge[bend right=10] node[above] {$-1$} (zero)
            (i-plus-one) edge[bend right=10] node[right] {$0$} (one)
            (one) edge[bend right=10] node[left] {$0$} (i-plus-one)
            (zero) edge[>=latex,loop left] node[left] {$0$} ()
            (one) edge[>=latex,loop above] node[below left=7pt] {$0$} ()
            (two) edge node[above] {$0$} (one)
            (two) edge[bend right=10] node[below left=6pt] {$+2$} (i-plus-two)
            (i-plus-two) edge[bend right=10] node[right] {$+1$} (two)
            (i-plus-two) edge node[above right=-5pt] {$-1$} (one)
            (i-plus-one) edge[bend right=10] node[below] {$+2$} (i-plus-two)
            (i-plus-two) edge[bend right=10] node[above=-2pt] {$0$} (i-plus-one)
            (i-plus-one) edge[bend right=10] node[below left=6pt] {$+2$} (two)
            (two) edge[bend right=10] node[above right=6pt] {$0$} (i-plus-one)
            (two) edge[bend right=35] node[right=30pt] {$0$} (zero)
            (i-plus-two) edge[bend left=95] node[below] {$0$} (zero)
        ;
    \end{tikzpicture}
    \caption{Graph~$\Gamma$ for $\beta = \ii - 1$ and $\D = \{0, \pm 1, \pm \ii\}$ introduced in Definition~\ref{D:graph-Gamma}, and fully described in the proof of Lemma~\ref{lem:kondenzovanyG}. Each edge~$e = ([u], [v])$ of~$\Gamma$ is marked with its weight~$w(e) \in \{-1, 0, 1, 2\}$.}
\label{F:graph-Gamma-condensed}
\end{figure}
%%%%%%%%%%%%%%%%%%%%%%%%%%%%%%%%%%%%%%%%%%%%%%%%%%

%%%%%%%%%%%%%%%%%%%%%%%%%%%%%%%%%%%%%%%%%%%%%%%%%%
\begin{lemma}\label{le: kondenzovanyGraf1}
Let $P=q_0 e_1 q_1 e_2 q_2 e_3 q_3 \cdots e_{\ell} q_{\ell}$ be an optimal path in graph~$G$. Then $q_k \in [0] \cup [1] \cup [1+\ii]$ for every $k \in \{0, 1, \ldots, \ell\}$. Moreover, if $q_k \in [1+\ii]$, then $q_{k+1} \neq 0$.   
\end{lemma}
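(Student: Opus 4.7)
The plan is to lift the optimal path~$P$ to a closed walk in the condensed graph~$\Gamma$ and then run a potential/discharging argument there. By Corollary~\ref{le: optimHrany} together with Definition~\ref{D:graph-Gamma}, for every edge $e_k = (q_{k-1}, q_k)$ of an optimal path the weight $w(e_k)$ equals the weight of the corresponding $\Gamma$-edge $\eta_k = ([q_{k-1}], [q_k])$; otherwise a parallel $G$-edge of strictly smaller weight would exclude $e_k$ from any optimal path. Consequently $\Pi := [q_0][q_1]\cdots[q_\ell]$ is a closed walk at~$[0]$ in~$\Gamma$ with $w(\Pi) = w(P) = 0$.

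Next I introduce the potential $\phi\colon V(\Gamma) \to \mathbb{Z}$ defined by $\phi([0]) = 0$, $\phi([1]) = \phi([1+\ii]) = 1$, and $\phi([2]) = \phi([2+\ii]) = 2$. A routine scan of all the edges listed in Table~\ref{fig:kondenzG} (equivalently, Figure~\ref{F:graph-Gamma-condensed}) verifies the slackness inequality $w(\eta) \geq \phi([v]) - \phi([u])$ for every edge $\eta = ([u],[v])$ of~$\Gamma$, so the slack $s(\eta) := w(\eta) - \phi([v]) + \phi([u])$ is non-negative. Because $\Pi$ is closed at~$[0]$, the potential differences telescope to zero, giving
\[
0 \,=\, w(\Pi) \,=\, \sum_{k=1}^{\ell} w(\eta_k) \,=\, \sum_{k=1}^{\ell} s(\eta_k),
\]
and since each $s(\eta_k) \geq 0$, every edge of~$\Pi$ must be tight.

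A direct inspection identifies the tight edges of~$\Gamma$ as precisely $[0]\to[0]$, $[1]\to[1]$, $[0]\to[1]$, $[1]\to[0]$, $[0]\to[1+\ii]$, $[1]\to[1+\ii]$, $[1+\ii]\to[1]$, and $[2+\ii]\to[1]$. No tight edge enters $[2]$ or $[2+\ii]$, so starting from~$[0]$ the walk~$\Pi$ can never reach those two classes, which proves the first claim. Moreover, the only tight edge leaving~$[1+\ii]$ is $[1+\ii]\to[1]$, so whenever $q_k \in [1+\ii]$ one has $q_{k+1} \in [1] = \{\pm 1, \pm \ii\}$; in particular $q_{k+1} \neq 0$, which proves the second claim. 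The main obstacle is guessing the correct values of~$\phi$: they must simultaneously make the weight-$(+2)$ entry edges from~$[1+\ii]$ into $\{[2], [2+\ii]\}$ non-tight and the weight-$0$ edge $[1+\ii]\to[0]$ non-tight, while still respecting the weight-$(-1)$ exit edge $[2+\ii]\to[1]$; the assignment $0,1,1,2,2$ is essentially forced by these competing constraints.
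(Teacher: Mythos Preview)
Your proof is correct and takes a genuinely different route from the paper's.

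The paper argues by contradiction via Lemma~\ref{le:eliminace}: it isolates a maximal excursion $P'$ of the optimal path into $[2]\cup[2+\ii]$, reads off from Figure~\ref{F:graph-Gamma-condensed} that any such excursion must start in $[1+\ii]$ with an edge of weight~$+2$, stay non-negative inside, and exit with weight~$\geq -1$, hence $w(P')\geq 1$; then it exhibits a concrete path of weight $\leq 0$ between the same endpoints in $[0]\cup[1]\cup[1+\ii]$ and invokes Lemma~\ref{le:eliminace} for a contradiction. The second claim is handled the same way, by exhibiting a two-step path $[1+\ii]\to[1]\to[0]$ of weight~$-1$ that beats the direct weight-$0$ edge $[1+\ii]\to[0]$.

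Your potential argument replaces this case-by-case shortcut hunting with a single global certificate: once one checks from Table~\ref{fig:kondenzG} that $s(\eta)=w(\eta)-\phi([v])+\phi([u])\geq 0$ on every $\Gamma$-edge, the telescoping identity on the closed weight-$0$ walk forces every edge to be tight, and the lemma drops out from the list of tight edges. This is cleaner and more uniform (both assertions follow from the same tight-edge list), and it avoids direct appeal to Lemma~\ref{le:eliminace}, using only its one-step consequence Corollary~\ref{le: optimHrany}. The paper's approach, on the other hand, is more self-explanatory to a reader who has not seen discharging arguments, and makes visible \emph{why} the excursions are wasteful rather than just certifying it. Your potential values $0,1,1,2,2$ are in fact exactly the minimum $\Gamma$-weights from $[0]$ to each class, so the two arguments are dual: the paper compares with explicit shortest paths, while you encode those shortest-path distances into~$\phi$ and let the slack do the work.
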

%%%%%%%%%%%%%%%%%%%%%%%%%%%%%%%%%%%%%%%%%%%%%%%%%%
\begin{proof}
Since the path~$P$ is optimal, Corollary~\ref{le: optimHrany} implies for each $k = 1, 2, \ldots, \ell$ that $([q_{k-1}], [q_k])$ is an edge in~$\Gamma$ with weight~$w(e_k)$. Hence, any optimal path~$P$ in~$G$ has a corresponding path in~$\Gamma$ with the same weight.

First part of the lemma is shown by contradiction. Assume that at least one vertex of the optimal path belongs to $[2] \cup [2+\ii]$. It means that $P$~has a subpath $P' = p_0 f_1 p_1 f_2 \cdots f_j p_j$, where $2 \leq j \leq \ell$, $p_0, p_j \in [0] \cup [1] \cup [1+\ii]$ and $p_1, p_2, \ldots, p_{j-1} \in [2] \cup [2+\ii]$. The form of the graph~$\Gamma$, as depicted in Figure~\ref{F:graph-Gamma-condensed},  implies the following:
\begin{itemize}
    \item $p_0 \in [1+\ii]$ and weight $w(f_1) = 2$;
    \item $w(f_k) \geq 1$ for each $k = 2, 3, \ldots, j-1$; 
    \item weight $w(f_j) \geq -1$. 
\end{itemize}
Consequently, $w(P') = \sum_{k=1}^j w(f_k) \geq 1$.

On the other hand, there exists a path of weight $0$ or~$-1$ starting in $p_0 \in [1+\ii]$ and ending in $p_j \in [0] \cup [1] \cup [1+\ii]$, which contradicts Lemma~\ref{le:eliminace}. 

To complete the proof, we observe that there is a path of length~$2$ starting in~$[1+\ii]$, ending in~$[0]$, and having weight~$-1$. Lemma~\ref{le:eliminace} implies that the edge $([1+\ii], [0])$ of weight~$0$ cannot be used in the optimal path.
\end{proof}
%%%%%%%%%%%%%%%%%%%%%%%%%%%%%%%%%%%%%%%%%%%%%%%%%%

%%%%%%%%%%%%%%%%%%%%%%%%%%%%%%%%%%%%%%%%%%%%%%%%%%
\begin{corollary}\label{co:zmensenyGraf}
Let $P = q_0 e_1 q_1 e_2 q_2 e_3 q_3 \cdots e_{\ell} q_{\ell}$ be a path in~$G$ with $q_0 = q_\ell = 0$ and let $\GG$ be the graph depicted on Figure~\ref{F:graph-Gamma-limited}. The path~$P$ is optimal if and only if, for every $k \in \{0, 1, \ldots, \ell\}$, the two properties are fulfilled:
$$q_k \in [0] \cup [1] \cup [1+\ii] \quad \text{and} \quad w(e_k) \text{\ equals the weight of the edge \ } w([q_{k-1}], [q_{k}]) \in \GG \, .$$
\end{corollary}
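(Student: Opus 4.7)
The plan is to prove the two implications separately, relying on \cref{le: kondenzovanyGraf1} and \cref{le: optimHrany} for the forward direction, and on a simple potential-function argument for the reverse direction.

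For the $(\Rightarrow)$ direction, assume $P$ is optimal. The inclusion $q_k \in [0] \cup [1] \cup [1+\ii]$ is the first assertion of \cref{le: kondenzovanyGraf1}. The second assertion of the same lemma says that whenever $q_{k-1} \in [1+\ii]$, then $q_k \neq 0$; consequently the sequence $[q_0], [q_1], \ldots, [q_\ell]$ never traverses the edge $([1+\ii],[0])$ of $\Gamma$, so each transition $([q_{k-1}], [q_k])$ is in fact an edge of $\GG$. To match the weights, I would invoke \cref{le: optimHrany}: since $P$ is optimal, the weight $w(e_k)$ cannot exceed the minimum weight over all edges $(u', v') \in G$ with $u' \sim q_{k-1}$ and $v' \sim q_k$, and this minimum is exactly the weight of $([q_{k-1}], [q_k])$ by the very definition of $\Gamma$ (\cref{D:graph-Gamma}).

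For the $(\Leftarrow)$ direction, the key observation is that the weights of $\GG$ come from a potential function on its three vertices. Define $\phi([0]) = 0$ and $\phi([1]) = \phi([1+\ii]) = 1$. A direct check of the seven edges of $\GG$ (obtained from \cref{F:graph-Gamma-condensed} by deleting $[2]$, $[2+\ii]$, and the edge $([1+\ii],[0])$) shows that each edge $\eta = ([u], [v])$ satisfies $w(\eta) = \phi([v]) - \phi([u])$. Hence, under the two stated hypotheses, the weight of $P$ telescopes:
\begin{equation*}
w(P) = \sum_{k=1}^{\ell} w(e_k) = \sum_{k=1}^{\ell}\bigl(\phi([q_k]) - \phi([q_{k-1}])\bigr) = \phi([q_\ell]) - \phi([q_0]) = 0.
\end{equation*}
Because $P$ starts and ends at $q_0 = q_\ell = 0$, \cref{re:cestaG} says that $P$ transforms some $(\beta, \D)$-representation of a Gaussian integer into its $3$-NAF representation, and $w(P) = 0$ means the Hamming weight is preserved, so the input representation is optimal and thus $P$ itself is optimal by definition.

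The only delicate point I expect is understanding why precisely the edge $([1+\ii],[0])$ is the one that must be removed from $\Gamma$ when forming $\GG$: its $\Gamma$-weight is $0$, whereas the potential difference $\phi([0]) - \phi([1+\ii]) = -1$ would disagree, so including it would allow cycles from $[0]$ to $[0]$ of strictly negative weight, contradicting \cref{le:nezaporneCesty}. The exclusion imposed by the second clause of \cref{le: kondenzovanyGraf1} is exactly what makes the telescoping argument close.
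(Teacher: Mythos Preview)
Your proof is correct and follows essentially the same line as the paper. For the forward direction you invoke \cref{le: kondenzovanyGraf1} and \cref{le: optimHrany} exactly as the paper does (implicitly, via the phrase ``follows from the proof of Lemma~\ref{le: kondenzovanyGraf1}''); for the reverse direction the paper simply asserts that every closed path in $\GG$ through $[0]$ has weight~$0$, and your potential function $\phi$ is a clean explicit certificate of precisely that fact.

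One small point worth tightening in the forward direction: when you say that $w(e_k)$ ``cannot exceed the minimum weight'' over edges with endpoints in the same equivalence classes, you should also remark that $e_k$ itself is such an edge, so $w(e_k)$ equals that minimum. For the loop case $[q_{k-1}]=[q_k]$ the definition of $\Gamma$ fixes the loop weight at~$0$ rather than at the minimum, so one should note (as the paper does inside the proof of Lemma~\ref{lem:kondenzovanyG}) that no weight-$(-1)$ loop exists in $[0]$ or $[1]$ and no weight-$0$ loop exists in $[1+\ii]$; together with the second clause of \cref{le: optimHrany} this guarantees that an optimal edge with $q_{k-1}\sim q_k$ really corresponds to a loop of $\GG$. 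These are details already established in the paper, so your references cover them, but making the loop case explicit would remove any ambiguity.
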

%%%%%%%%%%%%%%%%%%%%%%%%%%%%%%%%%%%%%%%%%%%%%%%%%%
\begin{proof}
It follows from the proof of Lemma~\ref{le: kondenzovanyGraf1} that optimal paths can use only vertices and edges from the subgraph~$\GG$. The opposite implication follows from the fact that all paths in~$\GG$, which start and end in~$[0]$, have weight~$0$, and thus are optimal.
\end{proof}
%%%%%%%%%%%%%%%%%%%%%%%%%%%%%%%%%%%%%%%%%%%%%%%%%%

%%%%%%%%%%%%%%%%%%%%%%%%%%%%%%%%%%%%%%%%%%%%%%%%%%
\begin{figure}
    \centering
    \begin{tikzpicture}[scale=1.5]
        \node[shape=circle,draw=black,minimum size=13mm] (zero) at (0,0) {\([0]\)};
        \node[shape=circle,draw=black,minimum size=13mm] (one) at (2,0) {\([1]\)};
        \node[shape=circle,draw=black,minimum size=13mm] (i-minus-one) at (1,-{sqrt(3)}) {\([1+\ii]\)};
        \path[-latex]
            (zero) edge node[left] {$+1$} (i-minus-one)
            (zero) edge[bend right=10] node[below] {$+1$} (one)
            (one) edge[bend right=10] node[above] {$-1$} (zero)
            (i-minus-one) edge[bend right=10] node[right] {$0$} (one)
            (one) edge[bend right=10] node[left] {$0$} (i-minus-one)
            (zero) edge[>=latex,loop left] node[left] {$0$} ()
            (one) edge[>=latex,loop right] node[right] {$0$} ()
        ;
    \end{tikzpicture}
    \caption{Graph $\GG$ -- a sub-graph of graph~$\Gamma$ (given on Figure~\ref{F:graph-Gamma-condensed}) for $\beta = \ii - 1$ and $\D = \{0, \pm 1, \pm \ii\}$, limited to optimal paths only, labeled with weights of its edges.}
    \label{F:graph-Gamma-limited}
\end{figure}
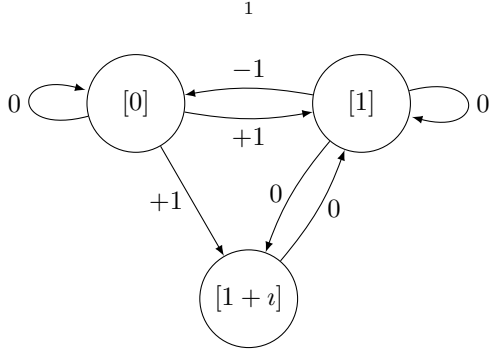
%%%%%%%%%%%%%%%%%%%%%%%%%%%%%%%%%%%%%%%%%%%%%%%%%%

%%%%%%%%%%%%%%%%%%%%%%%%%%%%%%%%%%%%%%%%%%%%%%%%%%%%%%%%%%%%%%%%%%%%%%%%%%%%%%%%%%%%%%%%%%%%%%%%%%%%
\smallskip
\
\subsection{Number of Optimal Representations}
\
\medskip
%%%%%%%%%%%%%%%%%%%%%%%%%%%%%%%%%%%%%%%%%%%%%%%%%%%%%%%%%%%%%%%%%%%%%%%%%%%%%%%%%%%%%%%%%%%%%%%%%%%%

In this subsection, we create a matrix formula, which assigns the number of optimal representations  to any Gaussian integer $x \in \Z[\ii]$.  In fact, Proposition \ref{prop:maticovy vzorecek} says that the mapping which assigns to the $3$-NAF representation of~$x$ the number of  its optimal representations is a recognisable series over the alphabet $\mathcal{C} =\{0\}\cup\{00d: d \in \{1,-1,\ii,-\ii\}\}$.   

We recall the notion of the recognisable series as introduced  in \cite{BeRe2010}.  Let  $\mathbb{F}$ be a field and $\mathcal{C}$ be a finite alphabet.  A mapping $f : \mathcal{C}^* \mapsto \mathbb{F}$ is said to be a recognisable
series if there are a non-negative integer $k$, a~family $(M_c), c \in \mathcal{C}$ of $k\times k$  matrices
over $\mathbb{F}$ and vectors $\vec{u} \in \mathbb{F}^{1\times k}$ and $\vec{v} \in \mathbb{F}^{k\times 1}$ such that for all words $w = w_0 w_1 \cdots w_{\ell-1} \in \mathcal{C}^*$, we have
$$f(w) = \vec{u} M_w \vec{v}^\top \quad \text{with}\quad 
M_w := M_{w_0}M_{w_1}\cdots  M_{w_\ell}.$$ 
We call $(\vec{u},M,\vec{v})$ a linear representation of $f$ and $k$ the dimension of the linear
representation of $f$.

\medskip

Corollary~\ref{co:zmensenyGraf} enables to define a subgraph~$\G$ of the graph~$G$, such that there exists a bijection between the set of all paths in~$\G$ starting and ending in~$0$ and the set of all optimal representations of Gaussian integers. 

%%%%%%%%%%%%%%%%%%%%%%%%%%%%%%%%%%%%%%%%%%%%%%%%%%
\begin{definition}\label{D:graph-GG}
We denote by~$\G$ the maximal subgraph of~$G$ satisfying the following rules:
    \begin{itemize}
        \item $q$ is a vertex of~$\G$ when $[q]$ is a vertex of~$\GG$, and
        \item $e = (q, q')$ of weight~$m$ is an edge of~$\G$ when $([q], [q'])$ is an edge of~$\GG$ with label~$m$. 
    \end{itemize}
If $P = q_0 e_1 q_1 e_2 q_2 e_3 q_3 \cdots e_{\ell} q_{\ell}$ is a path in~$\G$, then the word  $Out(e_{\ell}) \cdots Out(e_1)$  is said to be the trace of~$P$.
\end{definition}
%%%%%%%%%%%%%%%%%%%%%%%%%%%%%%%%%%%%%%%%%%%%%%%%%%

The graph~$\G$ is depicted on Figure~\ref{fig: optimalCestyP}, and all its edges are listed in Table~\ref{quaternaryT}.

%%%%%%%%%%%%%%%%%%%%%%%%%%%%%%%%%%%%%%%%%%%%%%%%%%
\begin{table}[h]
    \centering
    \setlength{\tabcolsep}{3pt}
    \renewcommand{\arraystretch}{1.2}

\begin{tabular}{|c|c|c|c|c|}
    \hline
    $q$     & $q'$ & equation fulfilled by $e$ & label $In(e)\,|\,Out(e)$ & weight $w(e)$ \\
    \hline \hline
    $1$         & $1$       & $q - \beta^2 = q' \beta^3 - 1$            & $(-1)00\,|\,00(-1)$       & $0$\\
    \hline
    $1$         & $1$       & $q - \ii\beta = q' \beta^3 - \ii$         & $0(-\ii)0\,|\,00(-\ii)$   & $0$\\
    \hline
    $1$         & $-\ii$    & $q + \beta^2 = q' \beta^3 - 1$            & $100\,|\,00(-1)$          & $0$\\
    \hline
    $1$         & $-\ii$    & $q - \beta = q' \beta^3 + \ii$            & $0(-1)0\,|\,00 \ii$       & $0$\\
    \hline
    $1$         & $1 - \ii$ & $q + \ii\beta^2 = q' \beta^3 - 1$         & $\ii 00\,|\,00(-1)$       & $0$\\
    \hline
    $1$         & $0$       & $q = q' \beta^3 + 1$                      & $000\,|\,001$             & $-1$\\
    \hline
    $\ii - 1$   & $\ii$     & $q - 1 = q' \beta^3 - \ii$                & $00(-1)\,|\,00(-\ii)$     & $0$\\
    \hline
    $\ii - 1$   & $\ii$     & $q + \ii = q' \beta^3 + 1$                & $00 \ii\,|\,001$          & $0$\\
    \hline
    $\ii - 1$   & $1$       & $q = q' \beta$                            & $0\,|\,0$                 & $0$\\
    \hline
    $0$         & $1$       & $q - \ii\beta + 1 = q' \beta^3 - \ii$     & $0(-\ii)1\,|\,00(-\ii)$   & $1$\\
    \hline
    $0$         & $1$       & $q - \ii\beta + \ii = q' \beta^3 - 1$     & $0(-\ii)\ii\,|\,00(-1)$   & $1$\\
    \hline
    $0$         & $1$       & $q - \beta^2 + 1 = q' \beta^3 -1 $        & $(-1)01\,|\,00(-1)$       & $1$\\
    \hline
    $0$         & $1$       & $q + \ii\beta^2 + \ii = q' \beta^3 - \ii$ & $\ii 0 \ii\,|\,00(-\ii)$  & $1$\\
    \hline
    $0$         & $\ii -1$  & $q - \ii\beta^2 - 1 = q' \beta^3 + 1$     & $(-\ii)0(-1)\,|\,001$     & $1$\\
    \hline
    $0$         & $0$       & $q = q' \beta$                            & $0\,|\,0$                 & $0$\\
    \hline
    $0$         & $0$       & $q + d = q' \beta^3 + d$                  & $00d\,|\,00d$             & $0$\\
    \hline
\end{tabular}
\smallskip
\caption{List of all edges of graph~$\G$, for $\beta = \ii - 1$ and $\D = \{0, \pm 1, \pm \ii\}$, up to multiplication by~$\ii^k$. An edge $e = (q, q')$ and its label $In(e)\,|\,Out(e)$ have to fulfil the equation defining edges of~$G$ from Definition~\ref{D:graph-G}, and, simultaneously, $e$~must satisfy the requirements of Definition~\ref{D:graph-GG}. Due to the symmetries described in Lemma~\ref{le:HranoveSym}, it is sufficient to describe only edges with $q = 0$ and $q' \in \{0, 1, \ii - 1\}$, and all edges with $q \in  \{1, \ii-1\}$. The parameter $d$ belongs to $\{\pm 1, \pm \ii\}$.}  
\label{quaternaryT}
\end{table}
%%%%%%%%%%%%%%%%%%%%%%%%%%%%%%%%%%%%%%%%%%%%%%%%%%

%%%%%%%%%%%%%%%%%%%%%%%%%%%%%%%%%%%%%%%%%%%%%%%%%%
\begin{figure}
    \centering
    \begin{tikzpicture}[scale=0.65]
      \newcommand{\sides}{0/$1$,90/$\imath$,180/$-1$,270/$-\imath$}
      \newcommand{\corners}{0/$1+\imath$,90/$-1+\imath$,180/$-1-\imath$,270/$1-\imath$}
      \node[shape=rectangle,draw=black,minimum size=1.3cm] (zero) at (0,0) {\(0\)};
      \foreach \r/\n in \sides
        \draw[rotate=\r] (4,0) node[shape=rectangle,draw=black,minimum size=1.3cm] {\n};
      \foreach \r/\n in \corners
        \draw[rotate=\r] (4,4) node[shape=rectangle,draw=black,minimum size=1.3cm] {\n};
      \foreach \r/\n in \sides {
        \begin{scope}[rotate=\r,style=\r]
          \draw (3,0) -- (1,0);
          \draw (-0.4,1) -- (-0.4,3);
          \draw (-0.8,1) -- (-0.8,3);
          \draw (-1,0.4) -- (-3,0.4);
          \draw (-1,0.8) -- (-3,0.8);
          \draw (-1,1) -- (-3,3);
          \draw (-3,4.4) -- (-1,4.4);
          \draw (-4,1) -- (-4,3);
          \draw (-4,-3) -- (-4,-1);
          \draw (-3.2,1) to[bend left=25] (-1,3.2);
          \draw (-1,-3.6) to[bend left=25] (-3.6,-1);
          \draw plot[smooth] coordinates {(1,4.8) (2,4.7) (2,4.5) (1,4.4)};
          \draw plot[smooth] coordinates {(-5,-0.8) (-5.2,-0.4) (-5.2,0.4) (-5,0.8)};
          \draw plot[smooth] coordinates {(-1,1) (-2.5,1.5) (-1.5,2.5) (-1,1)};
          \draw[style=zero] plot[smooth] coordinates {(-3,4.8) (0,5.4) (5.2,5.2) (4.8,1)};
        \end{scope}
      }
      \draw[style=zero] plot[smooth] coordinates {(-0.1,-0.8) (-0.56,-0.56) (-0.8,0) (-0.56,0.56) (0,0.8) (0.56,0.56) (0.8,0) (0.56,-0.56) (0.1,-0.8)};
    \end{tikzpicture}

    \caption{Scheme of graph~$\G$, for $\beta = \ii - 1$ and $\D = \{0, \pm 1, \pm \ii\}$, whose full set of edges is listed in Table~\ref{quaternaryT}. For better transparency, the full labels $In(e)\,|\,Out(e)$ of the edges~$e$ are replaced with colouring of the edges by five different colours, according to the $Out(e)$-part of the label, as follows:\\
    \centerline{$
        001 \mapsto \protect\tikz[baseline=-0.65ex] \protect\draw[0] (0,0) -- (1,0);\, ,\ 
        00\ii \mapsto \protect\tikz[baseline=-0.65ex] \protect\draw[90] (0,0) -- (1,0);\, ,\ 
        00\underline{1} \mapsto \protect\tikz[baseline=-0.65ex] \protect\draw[180] (0,0) -- (1,0);\, ,\ 
        00\underline{\ii} \mapsto \protect\tikz[baseline=-0.65ex] \protect\draw[270] (0,0) -- (1,0);\, ,\ 
        0 \mapsto \protect\tikz[baseline=-0.65ex] \protect\draw[zero] (0,0) -- (1,0);\, .
    $}
    }\label{fig: optimalCestyP} 
\end{figure}
%%%%%%%%%%%%%%%%%%%%%%%%%%%%%%%%%%%%%%%%%%%%%%%%%%

\medskip

The $3$-NAF representation of a non-zero Gaussian integer $x \in \Z[\ii]$ obtained by the transducer has a prefix $00d$, with $d \in \{\pm 1, \pm \ii\}$, and it is a concatenation of words from the set
\begin{equation}\label{E:G-edges}
\CC = \{0\} \cup \bigl\{00d : d \in \{\pm 1, \pm \ii\}\bigr\} = \{Out(e) : e \text{ is an edge of }G\} \, .
\end{equation}
Let us stress that decomposition of the $3$-NAF representation into elements of~$\CC$ is unique.\

The following lemma is a direct consequence of Corollary~\ref{co:zmensenyGraf}: 

%%%%%%%%%%%%%%%%%%%%%%%%%%%%%%%%%%%%%%%%%%%%%%%%%%
\begin{lemma}\label{le:OptimalVGrafu}
Let $x \in \Z[\ii]$. Then the number of optimal representations of~$x$ equals the number of paths~$P$ in~$\G$ such that $P$~starts and ends in vertex~$0$ and the trace of~$P$ in~$\G$ is the $3$-NAF representation of~$x$.  
\end{lemma}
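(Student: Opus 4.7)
The plan is to stitch together three earlier results: Remark~\ref{re:cestaG} to put representations in bijection with paths in $G$, the definition of weight together with Theorem~\ref{thm:MinimalHamming} to characterize optimal representations as weight-zero paths, and Corollary~\ref{co:zmensenyGraf} to identify weight-zero paths in $G$ (starting and ending at $0$) with paths in $\G$.

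First I would fix a $(\beta,\D)$-representation $r = x_N x_{N-1} \cdots x_1 x_0$ of $x \in \Z[\ii]$. By Remark~\ref{re:cestaG}, feeding~$r$ (padded with leading zeros if needed) into the transducer of Theorem~\ref{th:transducer} produces a unique path $P_r = q_0 e_1 q_1 \cdots e_\ell q_\ell$ in $G$ with $q_0 = q_\ell = 0$, with $In(e_\ell) \cdots In(e_1)$ giving~$r$, and with $Out(e_\ell) \cdots Out(e_1)$ giving the $3$-NAF representation of~$x$. Conversely, any path in $G$ from~$0$ to~$0$ arises in this way from its input, since the defining equations of the edges (Definition~\ref{D:graph-G}) force the input string to represent the same element of $\Z[\ii]$ as the output. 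This already gives a bijection between $(\beta,\D)$-representations of $x$ and paths in $G$ from~$0$ to~$0$ whose output (read in the appropriate order) is the $3$-NAF of~$x$.

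Next I would transport the optimality criterion through this bijection. By construction of the weight,
\[
 w(P_r) \;=\; \#\{\text{non-zero digits in } r\} \;-\; \#\{\text{non-zero digits in the } 3\text{-NAF of } x\}.
\]
Theorem~\ref{thm:MinimalHamming} guarantees $w(P_r) \geq 0$, and $r$ is optimal precisely when $w(P_r) = 0$. Thus optimal representations of $x$ correspond exactly to paths $P$ in $G$ from $0$ to $0$ of weight~$0$ whose trace is the $3$-NAF of~$x$.

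Finally I would invoke Corollary~\ref{co:zmensenyGraf}, which asserts that a path from $0$ to $0$ in $G$ is optimal if and only if all its vertices lie in $[0] \cup [1] \cup [1+\ii]$ and the weight of each edge matches the weight of the corresponding edge of $\GG$. By Definition~\ref{D:graph-GG}, these are exactly the edges belonging to~$\G$. Hence weight-zero paths from~$0$ to~$0$ in~$G$ are precisely paths from~$0$ to~$0$ in~$\G$, and the claim follows. There is no real obstacle here; the only subtlety worth double-checking is that the bijection truly is a bijection rather than a mere injection, which is clear because the input word of any path in $\G$ starting and ending at~$0$ is itself a $(\beta,\D)$-representation of $x$ (the edge equations preserve the algebraic value), and distinct paths yield distinct inputs.
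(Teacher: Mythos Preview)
Your argument is correct and is exactly the route the paper takes: the paper simply declares the lemma ``a direct consequence of Corollary~\ref{co:zmensenyGraf}'', and you have written out that deduction in full, via Remark~\ref{re:cestaG} for the bijection between representations and paths in~$G$, the weight/Hamming-weight correspondence for optimality, and Corollary~\ref{co:zmensenyGraf} together with Definition~\ref{D:graph-GG} to pass from weight-zero paths in~$G$ to paths in~$\G$.
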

%%%%%%%%%%%%%%%%%%%%%%%%%%%%%%%%%%%%%%%%%%%%%%%%%%

To establish the number of paths~$P$ in~$\G$ with a given trace, we use a standard method from graph theory.

Each element of the set $\CC = \{0, 001, 00\ii, 00\underline{1}, 00\underline{\ii}\}$ introduced in~\eqref{E:G-edges} is represented by a matrix $A_d \in \Z^{9 \times 9}$, where $d \in \D = \{0, \pm 1, \pm \ii\}$ is the last digit of~$Out(e)$. 

For every pair $q, q'$ of vertices in~$\G$, we define
\begin{equation}\label{eq:matrix-Ad}
\bigl( A_d \bigr)_{q', q} = \text{ the number of edges $e = (q, q')$ in~$\G$ with $d$ as the last letter of $Out(e)$.}
\end{equation}

When the nine vertices of~$\G$ are ordered as follows:    
\begin{equation}\label{E:matrix-Ad-order}
0, 1, \ii, -1, -\ii, 1 + \ii, -1 + \ii, -1 - \ii, 1 - \ii \, ,
\end{equation}
then the green (dashed) edges with $Out(e) = 001$ and the grey edges with $Out(e) = 0$ on Figure~\ref{fig: optimalCestyP} give
$$
A_1=\left( \begin{array}{c|cccc|cccc}
    1  &  1 & 0 & 0 & 0  &  0 & 0 & 0 & 0 \\
    \hline
    0  &  0 & 0 & 0 & 0  &  0 & 0 & 0 & 0 \\
    2  &  0 & 1 & 1 & 0  &  0 & 1 & 0 & 0 \\
    2  &  0 & 0 & 1 & 1  &  0 & 0 & 1 & 0 \\
    0  &  0 & 0 & 0 & 0  &  0 & 0 & 0 & 0 \\
    \hline
    0  &  0 & 0 & 0 & 0  &  0 & 0 & 0 & 0 \\
    1  &  0 & 0 & 1 & 0  &  0 & 0 & 0 & 0 \\
    0  &  0 & 0 & 0 & 0  &  0 & 0 & 0 & 0 \\
    0  &  0 & 0 & 0 & 0  &  0 & 0 & 0 & 0
\end{array}\right) \qquad \text{and} \qquad 
A_0=\left( \begin{array}{c|cccc|cccc}
    1  &  0 & 0 & 0 & 0  &  0 & 0 & 0 & 0 \\
    \hline
    0  &  0 & 0 & 0 & 0  &  0 & 1 & 0 & 0 \\
    0  &  0 & 0 & 0 & 0  &  0 & 0 & 1 & 0 \\
    0  &  0 & 0 & 0 & 0  &  0 & 0 & 0 & 1 \\
    0  &  0 & 0 & 0 & 0  &  1 & 0 & 0 & 0 \\
    \hline
    0  &  0 & 0 & 0 & 0  &  0 & 0 & 0 & 0 \\
    0  &  0 & 0 & 0 & 0  &  0 & 0 & 0 & 0 \\
    0  &  0 & 0 & 0 & 0  &  0 & 0 & 0 & 0 \\
    0  &  0 & 0 & 0 & 0  &  0 & 0 & 0 & 0
\end{array}\right) \, .
$$
The remaining matrices $A_d$ can be obtained, starting from matrix~$A_1$, by use of the symmetries described in Lemma~\ref{le:HranoveSym}. The task is facilitated by the symmetries fulfilled by the matrix set.\

Let us denote by $P_1, P_2, P_3 \in \Z^{4\times 4}$ the permutation matrices
$$ 
P_1=\left( \begin{array}{cccc}
0&1&0&0\\
0&0&1&0\\
0&0&0&1\\
1&0&0&0
\end{array}\right) \, , \quad P_2=\left( \begin{array}{cccc}
1&0&0&0\\
0&0&0&1\\
0&0&1&0\\
0&1&0&0
\end{array}\right) \, ,
\quad P_3=\left( \begin{array}{cccc}
0&0&0&1\\
0&0&1&0\\
0&1&0&0\\
1&0&0&0
\end{array}\right) \, ,
$$
and let us consider the following block-diagonal permutation matrices $R , C \in \Z^{9 \times 9}$:
$$
R = \left( \begin{array}{c|c|c}
   1 & & \\
   \hline
    & P_1 & \\
   \hline
    & & P_1 \\
\end{array}\right) \qquad \text{and} \qquad
C= \left( \begin{array}{c|c|c}
    1 & & \\
    \hline
    & P_2 & \\
    \hline
    &&P_3
\end{array}\right) \, .
$$
Note that
\begin{equation}\label{eq:MaticeRC}
C = C^\top \, , \qquad R^\top R = I = C^2 = R^4 \qquad \text{and} \qquad R C = C R^\top \, .
\end{equation}

Therefore, the group~$\P$ of permutation matrices generated by the two matrices $R$ and $C$ has just eight elements, namely:
\begin{equation}\label{eq:grupaPermutMatic}
    \P = \{I, R, R^2, R^3, C, C R, C R^2, C R^3\} \subset \Z^{9 \times 9} \, .
\end{equation}

%%%%%%%%%%%%%%%%%%%%%%%%%%%%%%%%%%%%%%%%%%%%%%%%%%
\begin{lemma}\label{le:vztahyMaticA_d}
Let $A_0, A_1, A_{-1}, A_{\ii}, A_{-\ii}$ be the matrices defined in~\eqref{eq:matrix-Ad}. Then 
\begin{itemize}   
    \item $A_{id} = R^\top A_d R$ for $d\in \{0, 1, i, -1, -\ii\} \,$;
    \item $A_0 = C R A_0 C \,$;
    \item $A_{\overline{d}} = R C A_d C$ for $d \in \{1, i, -1, -\ii\} \,$.   
\end{itemize}
\end{lemma}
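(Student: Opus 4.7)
The plan is to translate each of the three matrix identities into an equivalent pointwise equality of entries of $A_d$, which amounts to an edge-counting statement for the graph~$\G$, and then verify it using Lemma~\ref{le:HranoveSym}.

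First I would identify which permutations of the vertex set the matrices $R$ and $C$ realise under the ordering~\eqref{E:matrix-Ad-order}. A direct inspection of the blocks $P_1, P_2, P_3$ shows that $R$ corresponds to the permutation $\rho\colon q\mapsto -\ii q$, and $C$ to complex conjugation $\gamma\colon q\mapsto\overline q$, both acting on the nine vertices of~$\G$. This is well defined because the classes $[1]$ and $[1+\ii]$ are closed under multiplication by~$\ii$ and under conjugation, while $0$ is fixed by both. Using the elementary rule $(PAQ)_{ij}=A_{\pi^{-1}(i),\tau(j)}$ for permutation matrices realising $\pi,\tau$, the three claims of the lemma are seen to be equivalent to the pointwise identities
\[
(A_{\ii d})_{q',q}=(A_d)_{-\ii q',-\ii q},\quad (A_{\overline d})_{q',q}=(A_d)_{-\ii\overline{q'},\overline q},\quad (A_0)_{q',q}=(A_0)_{\ii\overline{q'},\overline q}.
\]

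The first identity is immediate from parts~(1) and~(3) of Lemma~\ref{le:HranoveSym}: the map $(q,q')\mapsto(\ii q,\ii q')$ is a bijection on edges of~$G$ that sends edges with last output digit~$d$ to edges with last output digit~$\ii d$; inverting it gives exactly the claimed equality. The remaining two identities reflect the fact, built into parts~(2) and~(4) of Lemma~\ref{le:HranoveSym}, that complex conjugation acts on the two kinds of edges with different twists on the target vertex. For a short edge $(q,q')$ with label $a\mid 0$ the partner is $(\overline q,\ii\overline{q'})$, still with last output digit~$0$, which yields $(A_0)_{q',q}=(A_0)_{\ii\overline{q'},\overline q}$ and hence the factor $CR$ on the left. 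For a long edge $(q,q')$ with label $cba\mid 00d$, $d\neq 0$, the partner is $(\overline q,-\ii\overline{q'})$ with last digit $\overline d$, which yields $(A_{\overline d})_{q',q}=(A_d)_{-\ii\overline{q'},\overline q}$ and hence the factor $RC$. The asymmetry between the two cases is precisely what forces the different positioning of $R$ and $C$ in the second and third formulas.

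The only real bookkeeping to be done is to check that these bijections restrict from~$G$ to the subgraph~$\G$. This reduces to observing that the edge-admissibility conditions in Definition~\ref{D:graph-GG} depend only on the $\sim$-classes of the endpoints and on the edge weight, both of which are preserved by multiplication by~$\ii$ and by conjugation (the weight preservation being already contained in Lemma~\ref{le:HranoveSym} together with Corollary~\ref{co:zmensenyGraf}). I expect this compatibility check to be the only delicate step, since everything else is a mechanical substitution. Once it is in place, the three identities follow at once.
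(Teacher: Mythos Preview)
Your proposal is correct and follows essentially the same route as the paper: reduce the matrix identities to the pointwise equalities $(A_{\ii d})_{q',q}=(A_d)_{-\ii q',-\ii q}$, $(A_0)_{q',q}=(A_0)_{\ii\overline{q'},\overline q}$, and $(A_{\overline d})_{q',q}=(A_d)_{-\ii\overline{q'},\overline q}$, then read these off from Lemma~\ref{le:HranoveSym}. Your explicit identification of $R$ and $C$ with the vertex permutations $q\mapsto -\ii q$ and $q\mapsto\overline q$, and your remark that the symmetries of Lemma~\ref{le:HranoveSym} preserve both the $\sim$-class and the edge weight and hence restrict from $G$ to $\G$, are exactly the ingredients the paper leaves implicit.
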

%%%%%%%%%%%%%%%%%%%%%%%%%%%%%%%%%%%%%%%%%%%%%%%%%%
\begin{proof}
The relations follow from the ordering of vertices in~\eqref{E:matrix-Ad-order} and from Lemma~\ref{le:HranoveSym}, which implies:
\begin{itemize}
    \item $\bigl( A_{id} \bigr)_{iq', iq} = \bigl( A_d \bigr)_{q',q}$ \ for $d \in \{0, 1, i, -1, -\ii\} \,$;
    \item $\bigl (A_{0} \bigr)_{i\overline{q'}, \overline{q}} = \bigl( A_0 \bigr)_{q', q} \,$;
    \item $\bigl( A_{\overline{d}} \bigr)_{\overline{iq'}, \overline{q}} = \bigl( A_d \bigr)_{q', q}$ for $d \in \{1, i, -1, -\ii\} \,$. 
\end{itemize}   
\end{proof}
%%%%%%%%%%%%%%%%%%%%%%%%%%%%%%%%%%%%%%%%%%%%%%%%%%

%%%%%%%%%%%%%%%%%%%%%%%%%%%%%%%%%%%%%%%%%%%%%%%%%%
\begin{remark}\label{re:prohozeniMatic}
The form of the group~$\P$, together with relations between the matrices stated in Lemma~\ref{le:vztahyMaticA_d}, ensure the following properties:
\begin{itemize}
    \item for each $S_1 \in \P$, $d_1 \in \{\pm 1, \pm \ii\}$ there exist $S_2 \in \P$, ${d_2} \in \{\pm 1, \pm \ii\}$ such that $S_1 A_{d_1} = A_{d_2} S_2 \,$;
    \item for each $S_1 \in \P$ there exists $S_2 \in \P$ such that $S_1 A_{0} = A_{0} S_2 \,$.
\end{itemize}
\end{remark}
%%%%%%%%%%%%%%%%%%%%%%%%%%%%%%%%%%%%%%%%%%%%%%%%%%

The number of paths in~$\G$ via edges of a given colour with both initial and final vertex equal to zero can be expressed by means of a matrix formula. The following statement is a matrix interpretation of Lemma~\ref{le:OptimalVGrafu}.

%%%%%%%%%%%%%%%%%%%%%%%%%%%%%%%%%%%%%%%%%%%%%%%%%%
\begin{proposition}\label{prop:maticovy vzorecek}
Let $u_1 u_2 u_3 \cdots u_n \in \mathcal{C}^*$ be the $3$-NAF representation of $x \in \Z[\ii]$ and let $d_\ell$ be the last letter of $u_\ell$ for every $\ell = 1, 2, \ldots, n$.
Then the number of non-zero digits in the $3$-NAF representation is $|d_1| + |d_2| + \cdots + |d_n|$  and the number of optimal representations of~$x$ equals 
$$\vec{e} A_{d_1} A_{d_2} \cdots A_{d_n} \vec{e}^\top , \ \text{ where } \vec{e} = (1, 0, 0, \ldots, 0) \in \Z^9 \, .$$
\end{proposition}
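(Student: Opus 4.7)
The first assertion is immediate from the structure of $\CC = \{0\}\cup\{00d : d\in\{\pm 1,\pm\ii\}\}$: every word in $\CC$ is uniquely determined by its last letter $d$, and contains exactly $|d|$ non-zero digits. Summing over $\ell=1,\ldots,n$ gives that the $3$-NAF representation $u_1u_2\cdots u_n$ has $\sum_{\ell=1}^n|d_\ell|$ non-zero digits.

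For the second assertion, the plan is to combine Lemma~\ref{le:OptimalVGrafu} with the classical fact that entries of products of adjacency matrices count weighted walks. By Lemma~\ref{le:OptimalVGrafu}, the number of optimal representations of $x$ equals the number of paths $P$ in $\G$ starting and ending at the vertex $0$ whose trace coincides with the $3$-NAF representation of $x$. Before invoking the matrices, I would first unpack what this means edge by edge. A path $P = q_0e_1q_1\cdots e_m q_m$ in $\G$ has trace $Out(e_m)\cdots Out(e_1)$, and because the decomposition of a word over $\D$ into letters of $\CC$ is unique, the trace equals $u_1u_2\cdots u_n$ iff $m=n$ and $Out(e_{n+1-k})=u_k$ for each $k$. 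Since each word in $\CC$ is uniquely determined by its last letter, this is equivalent to requiring that the last letter of $Out(e_j)$ equals $d_{n+1-j}$ for $j=1,\ldots,n$.

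With that reformulation in hand, the remainder is bookkeeping on matrix indices. By definition~\eqref{eq:matrix-Ad}, $(A_d)_{q',q}$ counts the edges from $q$ to $q'$ whose $Out$-label ends with $d$. Unrolling the product gives
$$
\vec{e}\, A_{d_1}A_{d_2}\cdots A_{d_n}\,\vec{e}^{\top} = \sum_{j_1,\ldots,j_{n-1}}(A_{d_1})_{0,j_1}(A_{d_2})_{j_1,j_2}\cdots (A_{d_n})_{j_{n-1},0},
$$
which enumerates sequences $(0, j_{n-1}, \ldots, j_1, 0)$ together with a choice, for each $k$, of an edge going from $j_k$ to $j_{k-1}$ (where $j_0=j_n=0$) whose output ends in $d_k$. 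Reindexing $q_0=q_n=0$ and $q_m=j_{n-m}$ flips the orientation and turns this into an enumeration of paths $q_0 e_1 q_1\cdots e_n q_n$ in $\G$ with $q_0=q_n=0$ such that the last letter of $Out(e_j)$ equals $d_{n+1-j}$. By the preceding paragraph, these are exactly the paths in $\G$ whose trace is $u_1u_2\cdots u_n$, and the count agrees with the number of optimal representations of $x$.

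The only delicate point I foresee is the bookkeeping on orientations: the trace of $P$ is the reverse concatenation $Out(e_n)\cdots Out(e_1)$, while the $3$-NAF is read left-to-right as $u_1\cdots u_n$, and the matrix convention $(A_d)_{q',q}$ is indexed by (target, source). These three conventions must be reconciled carefully, but once the substitution $q_m=j_{n-m}$ is performed the identification is forced. Everything else is a direct appeal to Lemma~\ref{le:OptimalVGrafu} and to the definition of $A_d$.
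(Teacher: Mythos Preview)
Your argument is correct and follows exactly the approach the paper intends: the paper states only that the proposition is ``a matrix interpretation of Lemma~\ref{le:OptimalVGrafu}'' via ``a standard method from graph theory,'' and you have supplied precisely those details, including the careful index-reversal needed to reconcile the trace convention $Out(e_\ell)\cdots Out(e_1)$ with the (target, source) indexing of $A_d$.
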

%%%%%%%%%%%%%%%%%%%%%%%%%%%%%%%%%%%%%%%%%%%%%%%%%%

%%%%%%%%%%%%%%%%%%%%%%%%%%%%%%%%%%%%%%%%%%%%%%%%%%%%%%%%%%%%%%%%%%%%%%%%%%%%%%%%%%%%%%%%%%%%%%%%%%%%
\smallskip
\
\subsection{Computer Experiments on the Number of Optimal Representations}
\
\medskip
%%%%%%%%%%%%%%%%%%%%%%%%%%%%%%%%%%%%%%%%%%%%%%%%%%%%%%%%%%%%%%%%%%%%%%%%%%%%%%%%%%%%%%%%%%%%%%%%%%%%

Even though we mostly focus on calculating the number of optimal representations in the numeration system (1), let us now regard the number of all representations of a given number~$x$ in a numeration system with  $\beta$ being  an algebraic integer, imaginary quadratic, and let $\D$ be its minimal norm representative digit set. Of course,  a string $d_N d_{N-1} \cdots d_0$ represents the same number as the string $0 d_N d_{N-1} \cdots d_0$. In the following lemma, we shall consider only so-called  normalized representations, whose leading coefficient is non-zero. If $\D \subset \Z[\beta]$ is a  minimal norm representative digit set for~$\beta$,  then no representation of $x = 0$ contains any non-zero digit. Hence, by the previous convention, the representation of $x = 0$ is only the empty string. 

%%%%%%%%%%%%%%%%%%%%%%%%%%%%%%%%%%%%%%%%%%%%%%%%%%
\begin{lemma}\label{le:r(x,k)}
Let $\beta$ be an algebraic integer, imaginary quadratic, and let $\D$ be its minimal norm representative digit set.   
For a given $x \in \Z[\beta]$ and $k \in \N$, denote by $r(x, k)$ the number of normalized $(\beta, \D)$-representations of~$x$ using exactly~$k$ non-zero digits and having non-zero leading coefficient.  
\begin{itemize}
    \item Then $r(0, 0) = 1$ and $r(0, k+1) = 0$.
    \item If $x \neq 0$, then $r(x, 0) = 0$ and    
        \begin{equation*}
            r \left( x, k+1 \right) =\left\{ 
                \begin{array}{ll}
                    r \left( \frac{x}{\beta}, k+1 \right) & \text{if } \beta \text{ divides } x \, , \\ &\\
                    \sum\limits_{\substack{d \in \D \\ \beta | (x-d)}} r \left( \frac{x-d}{\beta}, k \right) & \text{otherwise} \, .
                \end{array}\right.
        \end{equation*}
\end{itemize}
\end{lemma}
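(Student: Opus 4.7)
The proof proceeds by a direct case analysis on the trailing digit $d_0$ of a normalized representation $d_N d_{N-1} \cdots d_1 d_0$ (with $d_N \neq 0$) of $x \in \Z[\beta]$. The key algebraic fact, inherited from Definition~\ref{de:minimalNorm}, is that every non-zero element of $\D$ represents a residue class modulo $\beta^W$ that is not divisible by $\beta$; in particular, $0$ is the \emph{unique} element of $\D$ divisible by $\beta$. Since $d_0$ must satisfy $\beta \mid (x - d_0)$, this forces $d_0 = 0$ when $\beta \mid x$, and $d_0 \neq 0$ when $\beta \nmid x$.

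The base cases are immediate under the convention that the empty string is the unique normalized representation of $x = 0$: namely, $r(0, 0) = 1$ while $r(0, k+1) = 0$ for all $k \geq 0$, since any non-empty normalized string (having non-zero leading coefficient) represents a non-zero element; and $r(x, 0) = 0$ for $x \neq 0$, since the empty string represents only $0$.

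For $x \neq 0$ I would split into the two cases above. In Case~1 ($\beta \mid x$), the forced choice $d_0 = 0$ together with the deletion map $d_N \cdots d_1 0 \mapsto d_N \cdots d_1$ gives a weight-preserving bijection between normalized $(k+1)$-weight representations of $x$ and normalized $(k+1)$-weight representations of $x/\beta$ (note $x/\beta \neq 0$), producing $r(x, k+1) = r(x/\beta, k+1)$. In Case~2 ($\beta \nmid x$), the trailing digit $d_0$ ranges over precisely those $d \in \D$ with $\beta \mid (x - d)$, and for each such $d$ the prefix $d_N \cdots d_1$ represents $(x - d)/\beta$ with exactly $k$ non-zero digits; summing the resulting counts yields the stated formula.

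The only subtlety is that, when $N = 0$, the prefix $d_N \cdots d_1$ is empty and corresponds to the single-digit representation $x = d_0$; this boundary case is precisely what the convention $r(0, 0) = 1$ accounts for, so the recurrence remains uniform. I expect this bookkeeping at the empty-string boundary — ensuring that the empty representation of $0$ is counted by $r(0,0)$ and not double-counted elsewhere — to be the only non-routine point in an otherwise elementary digit-by-digit argument.
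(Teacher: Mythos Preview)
Your proposal is correct and follows essentially the same approach as the paper: both argue by examining the trailing digit, using the fact (from Definition~\ref{de:minimalNorm}) that $0$ is the unique digit of $\D$ divisible by $\beta$, so that $d_0 = 0$ is forced when $\beta \mid x$ and $d_0 \neq 0$ is forced otherwise. Your treatment is in fact more thorough than the paper's rather terse proof, which does not explicitly discuss the base cases or the empty-prefix boundary; the paper handles the latter implicitly via the remark preceding the lemma that no representation of $0$ can contain a non-zero digit.
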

%%%%%%%%%%%%%%%%%%%%%%%%%%%%%%%%%%%%%%%%%%%%%%%%%%
\begin{proof}
By Definition~\ref{de:minimalNorm}, if $d \in \D$ and $d \neq 0$, then $\beta$ does not divide~$d$. Hence, if $\beta$ divides~$x$, then necessarily the rightmost digit of any representation of~$x$ must be~$d_0 = 0$.

If $\beta$ does not divide~$x$ and a digit $d\in \D$ has the property $\beta | (x-d)$, then any string $s'$ containing $k$~non-zero digits and representing the number $x' = \frac{x-d}{\beta}$ can be used to compose a string~$s'd$, which represents the number~$x$ in $(\beta, \D)$.  
\end{proof}
%%%%%%%%%%%%%%%%%%%%%%%%%%%%%%%%%%%%%%%%%%%%%%%%%%

%%%%%%%%%%%%%%%%%%%%%%%%%%%%%%%%%%%%%%%%%%%%%%%%%%
\begin{example}\label{exm:stejnePocty}

Let us consider $\beta = \ii - 1$ and $\D = \{0, \pm 1, \pm \ii\}$. Since there are only two residual classes modulo~$\beta$ in~$\Z[\ii]$, the formula from Lemma~\ref{le:r(x,k)} has  for $x \neq 0$ the following form:
$$ r \left( x, k+1 \right) =
        \begin{cases}
            r \left( \frac{x}{\beta}, k+1 \right) & \ \text{if } \beta \text{ divides } x \, , \\
            \sum\limits_{d \in \{\pm 1, \pm \ii\}} r \left( \frac{x-d}{\beta}, k \right) & \text{otherwise} \, .
        \end{cases}
$$
Our digit set $\D = \{0, \pm 1, \pm \ii\}$ has the property that for every $a \in \D$ and every $\ell \in \N$ there exists $b \in \D$ such that $a \overline{\beta}^\ell = b {\beta}^\ell$. 
Hence, for every $x \in \Z[\ii]$ and $k \in \N$, the following relations hold:
\begin{equation}\label{eq:stejner(x,k)}
    r \left( x, k \right) = r \left( x \beta , k \right) = r \left( \ii\, x, k \right) = r \left( \overline{x}, k \right) \, .
\end{equation}

\noindent Let us calculate $r(x, k+1)$ for $x = 1$. If $k=1$, then 
$$r(1, 1)  =  r(0, 0) + r(2, 0) + r(1-\ii, 0) + r(1+\ii, 0) = 1.  $$
For $k\in \N$ we obtain \begin{eqnarray*}
    r(1, k+1) & = & r(0, k) + r(2, k) + r(1-\ii, k) + r(1+\ii, k) = \\
            & = & 0 + r(\ii, k) + r(-1, k) + r(-\ii, k) \ = \ 3 r(1, k)\, .
\end{eqnarray*}
By  mathematical induction we get $r(1, k+1) = 3^{k}$ for every $k \in \N$. Hence the number $x = 1$, as well as all non-zero Gaussian integers have infinitely many representations in the system $\beta = \ii - 1$ and $\D = \{0, \pm 1, \pm \ii\}$.   
\end{example}
%%%%%%%%%%%%%%%%%%%%%%%%%%%%%%%%%%%%%%%%%%%%%%%%%%

Let us return to the optimal representations. If we choose  $k$ to be the number of non-zero digits in the $3$-NAF representation of a Gaussian integer~$x$, then clearly $r(x, k)$ is the number of optimal representations of~$x$.

%%%%%%%%%%%%%%%%%%%%%%%%%%%%%%%%%%%%%%%%%%%%%%%%%%
\begin{example}\label{Ex:poctyPro2,3}
Using the formula for $r(x,k)$ from Example~\ref{exm:stejnePocty}, and by means of a computer program, we ascertain that:
\begin{enumerate}
    \item Any Gaussian integer with just 2~non-zero digits in its $3$-NAF representation has at most 3~optimal representations. It is for example the number $x = 2+\ii$,  its optimal representations are listed in Example \ref{ex:priklad2}.    
    Thanks to equation~\eqref{eq:stejner(x,k)}, each of the eight  Gaussian integers  $ \pm 1 \pm 2\ii, ¨ \pm 2 \pm \ii$  has 3~optimal representations. The  same holds also for multiples of these numbers by~$\beta^\ell$ with $\ell \in \N$. For any other Gaussian integer with just 2~non-zero digits in its $3$-NAF representation, the number of optimal representations is less than~3.
    \item Any Gaussian integer with just 3~non-zero digits in its $3$-NAF representation has at most 8~optimal representations. See e.g. $x = 5\ii-2$ with $(x)_{3-NAF} = 100\underline{1}00\underline{\ii} \, $. Analogously, the count of 8~optimal representations is reached only for $\pm x \beta^\ell, \pm \ii x \beta^\ell, \pm \overline{x} \beta^\ell, \pm \ii \overline{x} \beta^\ell \,$, with $\ell \in \N \,$. 
    \item In Table~\ref{tab:rN}, obtained by computer calculations, we provide the upper bound $\mathrm{Max}(N)$ on the number of optimal representations of Gaussian integers having just $N$~non-zero digits in their $3$-NAF representations.  
\end{enumerate}
%%%%%%%%%%%%%%%%%%%%%%%%%%%%%%%%%%%%%%%%%%%%%%%%%%
\begin{table}[h]
    \centering
    \setlength{\tabcolsep}{3pt}
    \renewcommand{\arraystretch}{1.2}
\begin{tabular}{|c||c|c|c|c|c|c|c|c|c|c|c|c|c|}
    \hline
    $N$ &$1$ &$2$ & $3$ & $4$ & $5$ & $6$ & $7$ & $8$ & $9$ & $10$ & $11$ & $12$ & $13$ \\
    \hline 
   $\mathrm{Max}(N)$ &$1$& $3$ & $8$ & $17$ & $39$ & $89$ & $201$ & $457$ & $1\,037$ & $2\,353$ & $5\,341$ & $12\,121$ & $27\,509$ \\
    \hline
\end{tabular}
\bigskip
\caption {$\mathrm{Max}(N)$ denotes the maximal number of optimal representations of Gaussian integers having just $N$~non-zero digits in their $3$-NAF representation.}\label{tab:rN}
\end{table}
%%%%%%%%%%%%%%%%%%%%%%%%%%%%%%%%%%%%%%%%%%%%%%%%%%
\end{example}
%%%%%%%%%%%%%%%%%%%%%%%%%%%%%%%%%%%%%%%%%%%%%%%%%%

Computer experiments also hint for which numbers~$x \in \Z[\ii]$ the value $\mathrm{Max}(N)$ is attained. Denote
$$ g_1 = 0 0 1 \, 0 0 \underline{\ii} \, 0 0 \underline{\ii} \, , \ \ 
g_2 = 0 0 1 \, 0 0 \underline{1} \, 0 0 \underline{\ii} \ \quad \text{and} \ \quad 
h_1 = 0 0 1 \, 0 0 \underline{\ii} \, 0 0 \underline{1} \, 0 0 \ii \, , \ \ 
h_2 = 0 0\underline{\ii} \, 0 0 \ii \, ,$$
and define four infinite eventually periodic sequences as follows:
$$\mathcal{S}_1 = g_1 \, h_1 \, h_1 \, h_1 \cdots, \quad \mathcal{S}_2 = g_2  \, h_1 \, h_1 \, h_1 \cdots, \quad \mathcal{S}_3 = g_1 \, h_2 \, h_2 \, h_2 \cdots, \quad \mathcal{S}_4 = g_2 \, h_2 \, h_2 \, h_2 \cdots .$$
For a fixed $N \in \N$, we consider without loss of generality such $x \in \Z[\ii]$ whose $3$-NAF representation has $1$ as the leading coefficient and contains just $N$ non-zero digits. Computer experiments suggest that such $x$ has $\mathrm{Max}(N)$ optimal representations if and only if the $3$-NAF representation of~$x$ is a~finite prefix of $\mathcal{S}_1, \mathcal{S}_2, \mathcal{S}_3$ or $\mathcal{S}_4$. This observation helps us to deduce the formula for values $\mathrm{Max}(N)$, see Theorem \ref{th:main}.

%%%%%%%%%%%%%%%%%%%%%%%%%%%%%%%%%%%%%%%%%%%%%%%%%%

%%%%%%%%%%%%%%%%%%%%%%%%%%%%%%%%%%%%%%%%%%%%%%%%%%

%%%%%%%%%%%%%%%%%%%%%%%%%%%%%%%%%%%%%%%%%%%%%%%%%%%%%%%%%%%%%%%%%%%%%%%%%%%%%%%%%%%%%%%%%%%%%%%%%%%%
\smallskip
\
\subsection{Gaussian Integers with Maximal Number of Optimal Representations}
\
\medskip
%%%%%%%%%%%%%%%%%%%%%%%%%%%%%%%%%%%%%%%%%%%%%%%%%%%%%%%%%%%%%%%%%%%%%%%%%%%%%%%%%%%%%%%%%%%%%%%%%%%%
%%%%%%%%%%%%%%%%%%%%%%%%%%%%%%%%%%%%%%%%%%%%%%%%%%%%%%%%%%%%%%%%%%%%%%%%%%%%%%%%%%%%%%%%%%%%%%%%%%%%

In this part, we derive an upper bound on the number of different optimal $(\beta, \D)$-representations of a Gaussian integer~$x$, depending on the number of non-zero digits in its $3$-NAF representation. We also show that this upper bound is sharp. In the case of binary signed digit representations (in base $\beta = 2$) introduced by G.~W.~Reitwiesner, the sharp upper bound is given by the sequence of Fibonacci numbers. In the numeration system we consider here, the analogous crucial role is played by the sequence $(r_N)_{N \geq 0}$ defined recursively, as follows:  
\begin{equation}\label{eq:NasFibo}
    \begin{array}{l}
        r_0 = r_1 = 1 \, , \ r_2 = 3 \, , \ r_3 = 8 \, , \ r_4 = 17 \, , \\
        r_{N+3} = r_{N+2} + 2r_{N+1} + 2r_{N} \quad \text{ for each } N \in \N \, , N \geq 2 \, .
    \end{array}
\end{equation}

\noindent Several initial elements of the sequence $(r_N)_{N \geq 1}$ coincide with the numbers listed in Table~\ref{tab:rN}. The roots of the characteristic polynomial $X^3 - X^2 - 2X - 2$ of the linear recurrent sequence $(r_N)_{N \geq 1}$ are $\lambda_1 \sim 2.26953$ and $\lambda_2 = \overline{\lambda_3}$ with $|\lambda_2| < 1$. Hence $r_N = c_1 \lambda_1^{N-1} + o(1)$, where $c_1 \sim 1.47313$.

%%%%%%%%%%%%%%%%%%%%%%%%%%%%%%%%%%%%%%%%%%%%%%%%%%

%%%%%%%%%%%%%%%%%%%%%%%%%%%%%%%%%%%%%%%%%%%%%%%%%%
\begin{theorem}\label{th:main} 
Let $N \in \N, N \geq 1$, and let us consider $(\beta, \D)$-representations of Gaussian integers in base $\beta = \ii - 1$ and alphabet $\D = \{0, \pm 1, \pm \ii\}$.
\begin{enumerate}
    \item If the $3$-NAF representation of a Gaussian integer $x \in \Z[\ii]$ contains exactly $N$~non-zero digits, then $x$~has at most $r_N$~optimal representations. 
    \item For $N\geq 4$, there are exactly 16~different Gaussian integers $x \in \Z[\ii] \setminus \beta \Z[\ii]$ whose $3$-NAF representation contains just $N$~non-zero digits and which have $r_N$ optimal representations.     
    \end{enumerate}
\end{theorem}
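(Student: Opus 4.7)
The starting point is Proposition~\ref{prop:maticovy vzorecek}: if the $3$-NAF of~$x$ decomposes into $\CC$-blocks $u_1 u_2 \cdots u_n$ with last letters $d_1, \ldots, d_n$, the count of optimal representations equals $\vec{e}\, A_{d_1}\cdots A_{d_n}\, \vec{e}^\top$. Direct inspection of~$A_0$ gives $\vec{e} A_0 = \vec{e}$ and $A_0\vec{e}^\top = \vec{e}^\top$, and both generators $R,C$ of~$\P$ preserve the first coordinate, so every $S\in\P$ fixes $\vec{e}$ and $\vec{e}^\top$. Combined with Remark~\ref{re:prohozeniMatic}, this shows that the bilinear form is invariant under a uniform unit-rotation $d_\ell \mapsto \ii^k d_\ell$; hence without loss of generality the leading non-zero digit of the $3$-NAF is~$1$.

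I would then reduce to three equivalence-class quantities. By Lemma~\ref{le: kondenzovanyGraf1} optimal paths in~$\G$ visit only vertices of $[0]\cup[1]\cup[1+\ii]$, and by Lemma~\ref{le:vztahyMaticA_d} the bilinear form depends on each~$d_\ell$ only through its $\P$-orbit. For each $N$ and each class $[q]\in\{[0], [1], [1+\ii]\}$, set
\[
    m_N([q]) := \max\bigl\{(\vec{e}\, A_{d_1}\cdots A_{d_n})_{q'} : q'\in[q],\ u_1\cdots u_n\in\CC^*\ \text{a $3$-NAF with $N$ non-zero digits}\bigr\}.
\]
Reading off the edges of~$\G$ from Table~\ref{quaternaryT} and maximising over the choice of the next digit (after verifying that intercalating an $A_0$-factor, which on row vectors kills the $[1+\ii]$-coordinates, never improves the bound), I would obtain linear inequalities
\[
    m_{N+1}([q]) \le \alpha_q\, m_N([0]) + \beta_q\, m_N([1]) + \gamma_q\, m_N([1+\ii])
\]
with non-negative integer coefficients. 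Iterating three steps and eliminating the classes $[1]$ and $[1+\ii]$ should yield
\[
    m_{N+3}([0]) \le m_{N+2}([0]) + 2\, m_{N+1}([0]) + 2\, m_N([0]),
\]
matching the recursion~\eqref{eq:NasFibo}. A direct computation on~$\G$ of the base values $m_0([0]),\ldots,m_3([0])$ against $r_0,\ldots,r_3$ then closes part~(1) by induction.

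For part~(2) I would chase the equality case in the inequalities above. Forcing tightness at each step pins down the next non-zero digit uniquely in terms of the current equivalence class, and unwinding the forced choices produces a $3$-periodic pattern after a short transient. Matching transients and periods against the edges of~$\G$ should give exactly the two prefixes $g_1, g_2$ and the two periodic continuations $h_1, h_2$ introduced before the theorem, and hence the four sequences $\mathcal{S}_1, \ldots, \mathcal{S}_4$. Applying the four unit rotations $x \mapsto \ii^k x$ to each base extremal preserves optimality and produces $4\times 4 = 16$ distinct Gaussian integers (the hypothesis $N\ge 4$ ensures the four base patterns have already diverged, so the four rotation-orbits are disjoint). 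All 16 lie in $\Z[\ii]\setminus\beta\Z[\ii]$ since each $\mathcal{S}_i$ has non-zero rightmost digit. As an internal consistency check, the identity $A_{\overline{d}} = RC\, A_d\, C$ from Lemma~\ref{le:vztahyMaticA_d} implies the extremal set is closed under complex conjugation, which is visible on the list of sixteen as a pairing of two pairs of rotation-orbits.

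The main technical obstacle is extracting from Table~\ref{quaternaryT} the precise coefficients that produce the $(1,2,2)$ third-order recurrence, and ruling out that inserting $A_0$-factors between non-zero blocks can ever strictly increase the maximum. The equality-case unfolding in part~(2) is then fairly mechanical, though one must still verify that the four identified patterns exhaust all extremal sequences once $N\ge 4$, i.e.\ no sporadic extremals escape the deterministic forced-choice dynamics on the three-class system.
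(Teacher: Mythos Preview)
Your overall strategy is sensible—translate the problem into a vector recursion via Proposition~\ref{prop:maticovy vzorecek}, bound the growth, and then chase equality cases—but the specific reduction you propose does not give a tight bound, and this is the central gap.

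The problem is the collapse to the three class maxima $m_N([0]), m_N([1]), m_N([1+\ii])$. Write $v = \vec{e}\,A_{d_1}\cdots A_{d_k}$ and look at the component $(v A_1)_0$. From the explicit matrix $A_1$ one has
\[
(v A_1)_0 \;=\; v_0 + 2v_{\ii} + 2v_{-1} + v_{-1+\ii},
\]
so the coefficient pattern on class~$[1]$ is $(0,2,2,0)$ on the four positions $(1,\ii,-1,-\ii)$, not a multiple of the all-ones vector. Replacing $v_{\ii}$ and $v_{-1}$ each by $m_N([1])$ gives the bound $m_{N+1}([0]) \le m_N([0]) + 4\,m_N([1]) + m_N([1+\ii])$, and the other rows give analogous inequalities. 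The resulting $3\times 3$ bounding system has spectral radius strictly larger than the root $\lambda_1 \approx 2.27$ of $X^3 - X^2 - 2X - 2$; hence iterating your inequalities can never recover the recursion~\eqref{eq:NasFibo}. Concretely, on the extremal vector $\vec{t}_m$ of Lemma~\ref{le:tNrekurence} the four $[1]$-components are $r_{m-2}+r_{m-3},\, r_{m-2},\, r_{m-1},\, r_{m-3}$, and the recursion $r_{m+1} = r_m + 2r_{m-1} + 2r_{m-2}$ works precisely because the two coefficient-$2$ slots in $(vA_1)_0$ land on $r_{m-1}$ and $r_{m-2}$, \emph{not} on two copies of the class maximum $r_{m-1}$. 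Your class-max collapse erases exactly this positional information.

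What the paper does instead is keep the full nine-dimensional vector, but compare vectors only up to the permutation group $\P$ (Definition~\ref{de:Usporadani}). The crux is Lemma~\ref{le:lepsi}: it shows directly that $\vec{t}_m A_d \prec \vec{t}_{m+1}$ for $d\in\{-1,\ii\}$, that $\vec{t}_m A_{-\ii} A_d \prec \vec{t}_{m+2}$, and that inserting $A_0$-factors is always strictly worse. These are componentwise inequalities between explicit nine-vectors, proved by hand using the closed form of Lemma~\ref{le:tNrekurence}. Together with Lemma~\ref{le:nejlepsi} this forces every competing product to be strictly below $r_N$, giving both the sharp bound and, by tracking the unique surviving choice at each step, the characterisation of the $16$ extremals in part~(2). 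Your equality-case analysis for part~(2) has the right shape, but it only works once part~(1) is established with tight inequalities—which requires the finer comparison, not the three-scalar one.
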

%%%%%%%%%%%%%%%%%%%%%%%%%%%%%%%%%%%%%%%%%%%%%%%%%%

The proof of Theorem~\ref{th:main} is split into several steps. In order to prove Theorem~\ref{th:main}, we have to determine, according to Proposition~\ref{prop:maticovy vzorecek}, the maximum 
\begin{equation}\label{eq:Max}
\mathrm{Max}(N) = \max\Bigl\{\vec{e} A_{d_1} A_{d_2} \cdots A_{d_n} \vec{e}^\top : d_j \in \{0, \pm 1, \pm \ii\} \text{ and } \sum_{j} |d_j| = N \Bigr\} \, .
\end{equation}
The number $\vec{e} A_{d_1} A_{d_2} \cdots A_{d_n} \vec{e}^\top$ can be expressed as a product of the row vector   $\vec{e} A_{d_1} A_{d_2} \cdots A_{d_k}$ and the column vector $A_{d_{k+1}} \cdots A_{d_n} \vec{e}^\top$.

We shall study which choices of the indices $d_1, d_2, \ldots, d_k$ result in the largest components of the row vector. All matrices $A_{d_j}$ have non-negative elements, and the upper left corner element is~$1$. Therefore, also the first component of the row vector $\vec{e} A_{d_1} A_{d_2} \cdots A_{d_k}$ as well as of the column vector $A_{d_{k+1}} \cdots A_{d_n} \vec{e}^\top$ is~$\geq 1$. 

%%%%%%%%%%%%%%%%%%%%%%%%%%%%%%%%%%%%%%%%%%%%%%%%%%
\begin{definition}\label{de:Usporadani}
Let $\vec{u}$ and $\vec{v}$ be row vectors from $\Z^9$.\

We denote by $\vec{u} \leq \vec{v}$ when the $j^{th}$ vector components satisfy  $(\vec{u})_j \leq (\vec{v})_j$ for each $j \in 1, \ldots, 9$.\

We say that $\vec{u}$ is  majorised  by $\vec{v}$, if there exists a permutation matrix $P \in \P$ such that $\vec{u} P \leq \vec{v}$, and we denote this fact by $\vec{u} \preceq \vec{v}$. If, moreover, the first vector components satisfy $(\vec{u}P)_1 < (\vec{v})_1$, we say that $\vec{u}$ is strictly majorised by $\vec{v}$, and denote that by $\vec{u} \prec \vec{v}$.\

When $\vec{u} \preceq \vec{v}$ and $\vec{v} \preceq \vec{u}$, we denote it by $\vec{u} \sim \vec{v}$.
\end{definition}
%%%%%%%%%%%%%%%%%%%%%%%%%%%%%%%%%%%%%%%%%%%%%%%%%%

It is easy to verify that the relation~$\preceq$ is a partial ordering on~$\Z^9$, and the relation~$\sim$ is an equivalence on~$\Z^9$. 

%%%%%%%%%%%%%%%%%%%%%%%%%%%%%%%%%%%%%%%%%%%%%%%%%%

%%%%%%%%%%%%%%%%%%%%%%%%%%%%%%%%%%%%%%%%%%%%%%%%%%
\begin{lemma}\label{le:nejlepsi}
Consider $d_1, \ldots , d_k, f_1, \ldots, f_m \in \{0, \pm 1, \pm \ii\}$ such that $\sum_{j=1}^k |d_j| = \sum_{j=1}^m |f_j|$.

If the vector $\vec{e} A_{d_1} A_{d_2} \cdots A_{d_k}$ is strictly majorised by $\vec{e} A_{f_1} A_{f_2} \cdots A_{f_m}$, then each choice of indices $d_{k+1},\ldots, d_{n} \in \{0, \pm 1, \pm \ii\}$ satisfies
$$\vec{e} A_{d_1} A_{d_2} \cdots A_{d_n} \vec{e}^\top < \mathrm{Max}(N) \, , \text{ where $N = \sum_{j=1}^n |d_j|$}.$$ 
\end{lemma}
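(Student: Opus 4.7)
The plan is to exploit the symmetries encoded by the permutation group~$\P$ together with Remark~\ref{re:prohozeniMatic}, to convert the claimed majorisation of row vectors into a pointwise strict inequality of matrix products. Throughout, write $\vec u = \vec e A_{d_1}\cdots A_{d_k}$ and $\vec v = \vec e A_{f_1}\cdots A_{f_m}$, and let $P\in\P$ realise the strict majorisation, so that $\vec u P\leq \vec v$ componentwise with $(\vec u P)_1<(\vec v)_1$.

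First I would commute $P^{-1}$ to the right of the product $A_{d_{k+1}}\cdots A_{d_n}$. Applying Remark~\ref{re:prohozeniMatic} to each factor in turn, one produces digits $d'_{k+1},\ldots,d'_n\in\D$ and a permutation $P'\in\P$ with
$$P^{-1}A_{d_{k+1}}\cdots A_{d_n}=A_{d'_{k+1}}\cdots A_{d'_n}P'.$$
By inspection of Lemma~\ref{le:vztahyMaticA_d}, each elementary commutation replaces the digit $d$ by $\ii d$, $\overline d$, or a composition of these operations, so $|d'_j|=|d_j|$ for every index. Next I observe that each generator of~$\P$ (namely $R$ and~$C$) has first row and first column equal to $(1,0,\ldots,0)$, since both $P_1,P_2,P_3$ are placed in sub-blocks that do not touch the first coordinate; hence every $Q\in\P$ fixes $\vec e^\top$, in particular $P'\vec e^\top=\vec e^\top$. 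Consequently,
$$\vec e A_{d_1}\cdots A_{d_n}\vec e^\top=(\vec u P)\,A_{d'_{k+1}}\cdots A_{d'_n}\,\vec e^\top.$$

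Set $\vec w = A_{d'_{k+1}}\cdots A_{d'_n}\vec e^\top$. Because all entries of every $A_d$ are non-negative integers and the $(1,1)$-entry of $A_0$ and $A_1$ equals~$1$ (hence, by the conjugation/rotation relations of Lemma~\ref{le:vztahyMaticA_d}, the $(1,1)$-entry of every $A_d$ equals~$1$), a direct induction shows $(\vec w)_1\geq 1$. Then the componentwise inequality $\vec u P\leq \vec v$, strengthened at the first coordinate by $(\vec u P)_1<(\vec v)_1$, yields
$$(\vec u P)\vec w=\sum_{i}(\vec u P)_i(\vec w)_i<\sum_{i}(\vec v)_i(\vec w)_i=\vec v\vec w,$$
since the defect in the first coordinate is multiplied by a strictly positive factor.

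Finally, $\vec v\vec w=\vec e A_{f_1}\cdots A_{f_m}A_{d'_{k+1}}\cdots A_{d'_n}\vec e^\top$ is a product whose total digit weight equals
$$\sum_{j=1}^{m}|f_j|+\sum_{j=k+1}^{n}|d'_j|=\sum_{j=1}^{k}|d_j|+\sum_{j=k+1}^{n}|d_j|=N,$$
using the hypothesis $\sum|d_j|=\sum|f_j|$ and $|d'_j|=|d_j|$. Therefore $\vec v\vec w\leq\mathrm{Max}(N)$ by the very definition~\eqref{eq:Max}, and combining the two previous displays gives the required strict bound. The main obstacle is the first step: one must certify that each elementary commutation supplied by Remark~\ref{re:prohozeniMatic} both preserves the digit modulus and keeps the resulting permutation inside~$\P$; both facts come from the explicit relations in Lemma~\ref{le:vztahyMaticA_d} together with the group identities~\eqref{eq:MaticeRC}, but they are the content of the argument, everything else being direct non-negative-matrix manipulation.
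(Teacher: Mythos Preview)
Your argument is correct and is essentially the same as the paper's: both insert $P P^{-1}$ between $\vec u$ and the tail product, use Remark~\ref{re:prohozeniMatic} to commute $P^{-1}=P^\top$ past $A_{d_{k+1}}\cdots A_{d_n}$ (preserving the digit moduli and landing on another element of~$\P$ that fixes $\vec e^\top$), and then compare $(\vec u P)\vec w$ with $\vec v\vec w$ using non-negativity and the strict first-coordinate gap. The only cosmetic difference is that the paper writes $\vec u\,\vec w=(\vec u P)(P^\top\vec w)$ and rewrites $P^\top\vec w$, whereas you rewrite the matrix product first; the two are identical.
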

%%%%%%%%%%%%%%%%%%%%%%%%%%%%%%%%%%%%%%%%%%%%%%%%%%
\begin{proof}
We denote by $\vec{u}$ and~$\vec{v}$, respectively, the row vectors $\vec{e} A_{d_1} A_{d_2} \cdots A_{d_k}$ and $\vec{e} A_{f_1} A_{f_2} \cdots A_{f_m}$; and we denote by~$\vec{w}$ the column vector $A_{d_{k+1}} A_{d_{k+2}} \cdots A_{d_n} \vec{e}^\top$. The first component of each of these three vectors is positive.\

As $\vec{u} \prec \vec{v}$, there exists  a permutation matrix $P \in \P$ such that $\vec{u} P \leq \vec{v}$ and $(\vec{u}P)_1 < (\vec{v})_1$. Obviously, 
$$\vec{e} A_{d_1} A_{d_2} \cdots A_{d_n} \vec{e}^\top = \vec{u} \cdot \vec{w} = (\vec{u} P) \cdot (P^\top \vec{w}) < \vec{v} \cdot (P^\top \vec{w}) \,.$$
Now, it is enough to show that $\vec{v} \cdot (P^\top \vec{w}) \leq \mathrm{Max}(N)$.\

By Remark~\ref{re:prohozeniMatic}, we have the equality $P^\top \vec{w} = A_{h_{k+1}} \cdots A_{h_n} Q \vec{e}^\top$ for a permutation matrix $Q \in \P$ and some indices $h_{k+1}, \ldots, h_{n}$. Moreover,  $|h_j| = |d_j|$ for each $j = k+1, \ldots, n$ and $Q \vec{e}^\top = \vec{e}^\top$.
Hence  
$$\vec{v} \cdot (P^\top \vec{w}) = (\vec{e} A_{f_1} A_{f_2} \cdots A_{f_m}) \cdot (A_{h_{k+1}} \cdots A_{h_n} \vec{e}^\top) \leq \mathrm{Max}(N) \,.$$
In the last inequality, we used the fact that $\sum_{j=1}^n |d_j| = \sum_{j=1}^m |f_j| + \sum_{j=k+1}^n |h_j| = N$.
\end{proof}
%%%%%%%%%%%%%%%%%%%%%%%%%%%%%%%%%%%%%%%%%%%%%%%%%%

%%%%%%%%%%%%%%%%%%%%%%%%%%%%%%%%%%%%%%%%%%%%%%%%%%
\begin{lemma}\label{lem:prvni4}
Let $N \in \{2, 3, 4\}$ and $\vec{u} = \vec{e} A_{d_1} A_{d_2} \cdots A_{d_k}$, where $|d_1| + |d_2| + \cdots + |d_k| = N$ and $d_1 \neq 0$, $d_k \neq 0$. Denote
\begin{itemize}
    \item $B_2 = A_1 A_{-1}$ and $\vec v_2 = \vec e B_2 = (3, 1, 1, 1, 0, 1, 0, 0, 0) \,$;
    \item $B_3 = A_1 A_{-1} A_{-\ii}$ and $\vec v_3 = \vec e B_3 = (8, 1, 3, 1, 3, 1, 1, 0, 0) \,$; 
    \item $B_4 = A_1 A_{-1} A_{-\ii} A_{-\ii}$ and $\vec v_4 = \vec e B_4 = (17, 1, 5, 3, 8, 1, 3, 0, 0) \,$. 
\end{itemize}
Then either $\vec v_N$ strictly majorises $\vec{u}$ or $\vec v_N \sim \vec{u}$. In the latter case, $k=N$ and there exist permutation matrices $P, S \in \P$ such that $S^\top B_N P = A_{d_1} A_{d_2} \cdots A_{d_k}$. 
\end{lemma}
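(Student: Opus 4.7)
The proof is a finite case analysis, organized first by the number $k-N$ of $A_0$-factors in the product $A_{d_1}\cdots A_{d_k}$, and then by the sequence of nonzero digits. The central tool throughout is the invariance $\vec{e}P=\vec{e}$ for every $P\in\P$, which follows from the block-diagonal shape of the elements of~$\P$, whose first block is the $1\times 1$ identity. Combined with Remark~\ref{re:prohozeniMatic}, this yields: whenever two products $M$ and $M'$ satisfy $M'=S^\top M P$ for some $S,P\in\P$, one has $\vec{e}M\sim\vec{e}M'$. By the symmetries $A_{\ii d}=R^\top A_d R$ and $A_{\overline d}=RCA_dC$ of Lemma~\ref{le:vztahyMaticA_d}, simultaneously rotating all digits by a power of~$\ii$ or complex-conjugating them all transforms the product by an element of $\P\times\P$, leaving its $\sim$-class untouched. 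Hence I may assume $d_1=1$ throughout.

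For $k>N$ (at least one $A_0$-factor present) I aim to establish strict majorisation $\vec{u}\prec\vec{v}_N$. Since the first column of any $P\in\P$ equals $\vec{e}^\top$, we have $(\vec{u}P)_1=\vec{u}_1$, so the definition reduces to two requirements: $\vec{u}_1<(\vec{v}_N)_1$, together with the existence of $P\in\P$ with $\vec{u}P\le\vec{v}_N$ coordinate-wise. The key structural observation is that columns $2,3,4,5$ of $A_0$ vanish, so any axis-vertex mass accumulated before an $A_0$-step is annihilated, while corner-vertex mass is merely permuted within the corner block $\{6,7,8,9\}$; only the vertex-$0$ component at position~$1$ survives an $A_0$-application unchanged, and any further growth of that component requires an additional nonzero digit, a budget the weight constraint denies. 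For each $N\in\{2,3,4\}$, the few inequivalent placements of $A_0$'s (assuming $d_1=1$) produce row vectors whose first coordinate is strictly less than $3$, $8$, respectively $17$; the coordinate-wise domination by a suitable $\P$-permute of $\vec{v}_N$ is then verified by direct inspection of the small number of nonzero entries.

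For $k=N$ I enumerate all tuples $(1,d_2,\dots,d_N)$ with $d_j\in\{\pm 1,\pm\ii\}$. For $N=2$ the four cases give: $\vec{e}A_1A_1=(1,1,0,\dots)$ and $\vec{e}A_1A_\ii=(1,0,1,0,\dots)$, both with first coordinate~$1$, hence strictly majorised by $\vec{v}_2$; next $\vec{e}A_1A_{-1}=\vec{v}_2$ by definition; and finally $\vec{e}A_1A_{-\ii}=\vec{v}_2\cdot(CR)$, hence equivalent to $\vec{v}_2$. The matrix identity required by the lemma in the $\sim$-case is obtained by systematically commuting the generators $R,C$ through the $A_d$ using Lemma~\ref{le:vztahyMaticA_d} and~\eqref{eq:MaticeRC}: starting from $A_{-\ii}=RA_1R^3$ and $A_{-1}=R^2A_1R^2$, and using $R^\top=R^{-1}$, $C^\top=C$, and $CR=R^{-1}C$, one derives the explicit equality $A_1A_{-\ii}=(RC)\,B_2\,(R^3C)$, which provides the required $S=CR^3\in\P$ and $P=R^3C\in\P$. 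The cases $N=3$ and $N=4$ proceed in exactly the same spirit: among the $16$, respectively $64$ tuples $(1,d_2,\dots,d_N)$, simultaneous rotation and conjugation of the tail reduce the problem to a handful of essentially different products, each compared with $B_N$ by analogous normal-form manipulations.

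The principal obstacle is the combinatorial bookkeeping for $N=4$: every individual verification is a routine matrix computation, but one must systematically identify which tuples lie in the $\P\times\P$-orbit of $B_N$ and, for each such tuple, produce the explicit factorisation $S^\top B_N P$ rather than merely the vector-level equivalence~$\sim$. The computer experiments of Example~\ref{Ex:poctyPro2,3} single out the canonical words $g_1,g_2,h_1,h_2$ and their rotations and conjugates as the expected maximisers; this prediction narrows the search and effectively reduces the proof to a short finite verification.
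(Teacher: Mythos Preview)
Your approach is correct and is essentially the same as the paper's: both reduce to a finite case check via the symmetries of Lemma~\ref{le:vztahyMaticA_d} together with $\vec eP=\vec e$, and both then declare the remaining computation routine. You actually supply more detail than the paper (the explicit $N=2$ verification and the factorisation $A_1A_{-\ii}=(RC)B_2(R^3C)$), whereas the paper just writes ``an easy, although a bit laborious calculation''. The one reduction the paper states explicitly and you leave implicit is that $A_0^2=\vec e^\top\vec e$, so between any two nonzero digits at most one $A_0$ need be inserted ($\ell_j\in\{0,1\}$); this is what makes ``the few inequivalent placements of $A_0$'s'' genuinely finite, and it follows immediately from your own observation that columns $2$--$5$ of $A_0$ vanish.
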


\begin{proof}
Note that $A_0^2 = \vec{e} \cdot \vec{e}^\top$ -- i.e., the matrix $A_0^2$ has only one non-zero element, namely on the position $(1,1)$. Consequently, $\vec{e} A_0 = \vec{e}$ and $\vec{e} P = \vec{e}$ for every permutation matrix $P \in \P$.
Thus, in order to prove the statement, 
\begin{itemize}
    \item for $N = 2$ it suffices to inspect $\vec{u} = \vec{e} A_1 A^{\ell_1}_0 A_{d_1} \,$,
    \item for $N = 3$ it suffices to inspect $\vec{u} = \vec{e} A_1 A^{\ell_1}_0 A_{d_1} A^{\ell_2}_0 A_{d_2} \,$, 
    \item for $N = 4$ it suffices to inspect $\vec{u} = \vec{e} A_1 A^{\ell_1}_0 A_{d_1} A^{\ell_2}_0 A_{d_2} A_0^{\ell_3} A_{d_3} \,$, 
\end{itemize}
where $\ell_1, \ell_2, \ell_3\in \{0, 1\}$ and $d_1, d_2, d_3 \in \{1, -1, \ii, -\ii\}$.
This is an easy, although a bit laborious calculation.
\end{proof}
%%%%%%%%%%%%%%%%%%%%%%%%%%%%%%%%%%%%%%%%%%%%%%%%%%

%%%%%%%%%%%%%%%%%%%%%%%%%%%%%%%%%%%%%%%%%%%%%%%%%%
\begin{remark}\label{rem:dvectverice}
By inspecting the fact that $\vec{v}_4 = \vec{e} A_1 A_{-1} A_{-\ii} A_{-\ii}$ majorises other vectors, we find out that $\vec{u} = \vec{e} A_1 A_{-1} A_{-\ii} A_{1} = (17,8,3,5,1,0,3,1,0) = \vec{v}_4 C R^3$. This implies that  $\vec{u} \sim \vec{v}_4$.
Analogously, we find out that $\vec{v}_3 \sim \vec{e} A_1 A_{-\ii} A_{-\ii}$.
\end{remark}
%%%%%%%%%%%%%%%%%%%%%%%%%%%%%%%%%%%%%%%%%%%%%%%%%%

Let us now define and study a sequence of row vectors $(\vec{t}_m)_{m \geq 4}$, which fulfils the relation $\mathrm{Max}(N) = \vec{t}_N \vec{e}^\top$ for every $N \in \N$, $N \geq 4$, as we shall prove later.\  

%%%%%%%%%%%%%%%%%%%%%%%%%%%%%%%%%%%%%%%%%%%%%%%%%%
\begin{definition}\label{def:tN}
Let $\vec{t}_4 = \vec{e} A_1 A_{-1} A_{-\ii} A_{-\ii} R^3 = (17, 5, 3, 8, 1, 3, 0, 0, 1) \in \Z^9$. For each $m \in \N$, $m \geq 4$ we set
\begin{equation}\label{D:tN-via-AR}
\vec{t}_{m+1} = \vec{t}_m A_1 R^2 \,.
\end{equation}
\end{definition}
%%%%%%%%%%%%%%%%%%%%%%%%%%%%%%%%%%%%%%%%%%%%%%%%%%

%%%%%%%%%%%%%%%%%%%%%%%%%%%%%%%%%%%%%%%%%%%%%%%%%%
\begin{lemma}\label{le:tNrekurence}
Let $(r_m)_{m \geq 1}$ be the sequence defined in~\eqref{eq:NasFibo}. Then
\begin{equation}\label{eq:tN-via-rN}
t_{m} = (r_{m}, r_{m-2} + r_{m-3}, r_{m-2}, r_{m-1}, r_{m-3}, r_{m-2}, 0, 0, r_{m-3}) \, \text{ for every } m \in \N, m \geq 5 \,.
\end{equation}
\end{lemma}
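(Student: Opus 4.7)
The plan is to prove the formula by induction on $m$, with the key reduction being that the update rule $\vec t_{m+1} = \vec t_m A_1 R^2$ becomes a simple linear map in coordinates once the matrix $A_1$ and the block-diagonal permutation $R^2 = \mathrm{diag}(1, P_1^2, P_1^2)$ are written out.

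First I would compute, once and for all, the action $\vec u \mapsto \vec u A_1 R^2$ on an arbitrary row vector $\vec u = (x_0, x_1, \ldots, x_8)$. Using the explicit form of $A_1$ given just before Lemma~\ref{le:vztahyMaticA_d}, one obtains
\[
\vec u A_1 = (x_0 + 2x_2 + 2x_3 + x_6,\ x_0,\ x_2,\ x_2+x_3+x_6,\ x_3,\ 0,\ x_2,\ x_3,\ 0),
\]
and since $P_1^2$ acts on a row vector of length $4$ by the cyclic shift $(a,b,c,d)\mapsto(c,d,a,b)$, applying $R^2$ yields
\[
\vec u A_1 R^2 = (x_0 + 2x_2 + 2x_3 + x_6,\ x_2+x_3+x_6,\ x_3,\ x_0,\ x_2,\ x_3,\ 0,\ 0,\ x_2).
\]
This single identity is the only computational ingredient needed.

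For the base case $m = 5$, I would plug $\vec t_4 = (17, 5, 3, 8, 1, 3, 0, 0, 1)$ into the identity above to obtain $\vec t_5 = (39, 11, 8, 17, 3, 8, 0, 0, 3)$, and then check by direct substitution that $r_5 = 17 + 2\cdot 8 + 2\cdot 3 = 39$, $r_3 + r_2 = 8 + 3 = 11$, $r_3 = 8$, $r_4 = 17$, $r_2 = 3$, which matches the claimed form $(r_5, r_3{+}r_2, r_3, r_4, r_2, r_3, 0, 0, r_2)$.

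For the inductive step, assume
\[
\vec t_m = (r_m,\ r_{m-2}+r_{m-3},\ r_{m-2},\ r_{m-1},\ r_{m-3},\ r_{m-2},\ 0,\ 0,\ r_{m-3}).
\]
Setting $(x_0, \ldots, x_8)$ equal to these components and inserting into the identity for $\vec u A_1 R^2$, every coordinate of $\vec t_{m+1}$ reads off immediately except position $0$, which becomes $r_m + 2 r_{m-2} + 2 r_{m-1} + 0$. The recurrence $r_{N+3} = r_{N+2} + 2 r_{N+1} + 2 r_N$ with $N = m-2$ gives exactly $r_m + 2 r_{m-1} + 2 r_{m-2} = r_{m+1}$, and the remaining coordinates $(r_{m-1}+r_{m-2},\, r_{m-1},\, r_m,\, r_{m-2},\, r_{m-1},\, 0,\, 0,\, r_{m-2})$ are precisely the claimed components of $\vec t_{m+1}$. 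There is no genuine obstacle here; the only care required is the bookkeeping of the cyclic shift in the two $4$-blocks of $R^2$ and the single invocation of the defining recurrence \eqref{eq:NasFibo}.
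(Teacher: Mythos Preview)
Your proof is correct and follows essentially the same approach as the paper's own proof: induction on $m$, verifying the base case $m=5$ by direct computation of $\vec t_5 = \vec t_4 A_1 R^2$, and then in the inductive step reading off the coordinates of $\vec t_m A_1 R^2$ and invoking the recurrence~\eqref{eq:NasFibo} for the first component. The only difference is cosmetic---you compute the general action $\vec u \mapsto \vec u A_1 R^2$ once up front, whereas the paper performs the same calculation implicitly inside the inductive step.
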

%%%%%%%%%%%%%%%%%%%%%%%%%%%%%%%%%%%%%%%%%%%%%%%%%%
\begin{proof}
We proceed by induction on~$m$.

By the recurrence $r_{N+3} = r_{N+2} + 2r_{N+1} + 2r_{N}$ for each $N \in \N \, , N \geq 2$ defined in~\eqref{eq:NasFibo}, we calculate from four initial values $r_1 = 1$, $r_2 = 3$, $r_3 = 8$, $r_4 = 17$ the next value $r_5 = 39$. The formula~\eqref{D:tN-via-AR} deriving $\vec{t}_{m+1}$ from $\vec{t}_m$ provides $\vec{t}_5 = (39, 11, 8, 17, 3, 8, 0, 0, 3)$, which equals $(r_5, r_3 + r_2, r_3, r_4, r_2, r_3, 0, 0, r_2)$, thus showing the statement for $m = 5$.

Let $m \geq 5$. The induction hypothesis together with definition $\vec{t}_{m+1} = \vec{t}_m A_1 R^2$ produce
$$ \vec{t}_{m+1} = (r_{m} + 2r_{m-1} + 2r_{m-2}, r_{m-1} + r_{m-2}, r_{m-1}, r_{m}, r_{m-2}, r_{m-1}, 0, 0, r_{m-2}) \, . $$
Due to~\eqref{eq:NasFibo}, the first component of $\vec{t}_{m+1}$ equals $r_{m+1}$, thereby the statement holds also for~$m+1$.
\end{proof}
%%%%%%%%%%%%%%%%%%%%%%%%%%%%%%%%%%%%%%%%%%%%%%%%%%

%%%%%%%%%%%%%%%%%%%%%%%%%%%%%%%%%%%%%%%%%%%%%%%%%%
\begin{lemma}\label{le:lepsi}
Let $m, \ell \in \N$, $m \geq 4$, $\ell > 0$ and $d \in \{\pm 1, \pm \ii\}$. Then 
\begin{enumerate}
    \item $ \vec{t}_m A_1 \sim \vec{t}_{m+1} \,$;
    \item $ \vec{t}_m A_{-1} \prec \vec{t}_{m+1}$ and $\vec{t}_m A_{\ii} \prec \vec{t}_{m+1} \,$;
    \item $ \vec{t}_m A_{-\ii} A_d \prec \vec{t}_{m+2} \,$;
    \item $ \vec{t}_m A_0^\ell A_d \prec \vec{t}_{m+1} \,$.
\end{enumerate}
\end{lemma}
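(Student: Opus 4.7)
The plan is to handle the four claims in turn, leveraging the explicit forms of $\vec{t}_m$ from Lemma~\ref{le:tNrekurence} and of $A_1, A_0$ displayed in the excerpt, together with the 8-element group~$\P$ of permutations. Claim~(1) is immediate from Definition~\ref{def:tN}: since $\vec{t}_{m+1} = \vec{t}_m A_1 R^2$ with $R^2 \in \P$, the vectors $\vec{t}_m A_1$ and $\vec{t}_{m+1}$ differ only by the permutation $R^{-2} \in \P$, and therefore $\vec{t}_m A_1 \sim \vec{t}_{m+1}$ by Definition~\ref{de:Usporadani}.

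For claims (2), (3), (4) the uniform strategy will be the following. Using Lemma~\ref{le:vztahyMaticA_d} together with $R^\top = R^3$ (since $R^4 = I$), I rewrite every $A_d$ with $d \in \{\pm 1, \pm \ii\}$ as a conjugate of $A_1$ by elements of~$\P$, namely $A_\ii = R^3 A_1 R$, $A_{-1} = R^2 A_1 R^2$, $A_{-\ii} = R A_1 R^3$. Next I compute $\vec{t}_m R^k$ from the closed form of Lemma~\ref{le:tNrekurence}; since $R$ permutes only the middle and the last block of four coordinates, this is a straightforward reordering. Then I multiply the result by $A_1$ using its explicit columns, compare the first coordinate of the outcome with that of $\vec{t}_{m+1}$ (respectively $\vec{t}_{m+2}$) via the recurrence $r_{N+3} = r_{N+2} + 2 r_{N+1} + 2 r_N$ from~\eqref{eq:NasFibo}, and finally exhibit an explicit permutation from~$\P$ realising the coordinate-wise domination for all remaining coordinates. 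The case $m = 4$ has to be checked separately from $\vec{t}_4 = (17, 5, 3, 8, 1, 3, 0, 0, 1)$, since the closed form of Lemma~\ref{le:tNrekurence} is valid only for $m \geq 5$.

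Claim~(4) admits a further shortcut. The matrix $A_0^2 = \vec{e}^\top \vec{e}$ is rank-one, supported only at entry $(0,0)$, so $\vec{t}_m A_0^\ell = r_m \vec{e}$ for every $\ell \geq 2$; then $r_m \vec{e} A_d$ equals $r_m$ times the first row of~$A_d$, which has only two nonzero entries, both equal to~$1$. Hence the first coordinate of $\vec{t}_m A_0^\ell A_d$ is $r_m < r_{m+1}$ and strict majorisation by $\vec{t}_{m+1}$ follows at once. The remaining case $\ell = 1$ is a one-step direct computation: from the columns of~$A_0$ one reads $\vec{t}_m A_0 = (r_m, 0, 0, 0, 0, r_{m-3}, r_{m-2}+r_{m-3}, r_{m-2}, r_{m-1})$, and applying any~$A_d$ yields a vector whose first coordinate is again strictly below $r_{m+1}$.

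The main obstacle is claim~(3). The tempting shortcut --- combining a bound of the form $\vec{t}_m A_{-\ii} \preceq \vec{t}_{m+1}$ with claim~(1) --- fails, because strict majorisation is not preserved under right multiplication by~$A_d$: the permutation witnessing $\vec{u} \preceq \vec{v}$ need not intertwine with~$A_d$. Instead, I will expand $\vec{t}_m A_{-\ii} A_d = \vec{t}_m R A_1 R^3 S_1 A_1 S_2$ with $S_1, S_2 \in \P$ obtained from the representation of $A_d$ as a conjugate of $A_1$, collapse $R^3 S_1 \in \P$ into a single permutation using Remark~\ref{re:prohozeniMatic}, and then carry out the two successive $A_1$-multiplications explicitly. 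The first coordinate of the outcome is a linear combination of $r_m, r_{m-1}, r_{m-2}, r_{m-3}$ which the recurrence shows to be strictly smaller than $r_{m+2}$. Thanks to the symmetry $A_{id} = R^\top A_d R$, the four values of~$d$ reduce to essentially one computation up to a permutation in~$\P$; producing the witness permutations in~$\P$ that give coordinate-wise domination by $\vec{t}_{m+2}$ in each case is the technically heaviest part of the lemma.
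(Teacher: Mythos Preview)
Your outline for (1), (2), (4) matches the paper's proof: the definition gives (1), direct computation with the closed form of $\vec{t}_m$ gives (2), and the rank-one structure of $A_0^2$ handles $\ell\ge 2$ in (4), with one explicit step for $\ell=1$. Your computation of $\vec{t}_m A_0$ is correct.

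There is a genuine slip in your treatment of~(3). The claim that the four values of~$d$ ``reduce to essentially one computation up to a permutation in~$\P$'' is false. Writing $A_d = S_1 A_1 S_2$ with $S_1,S_2\in\P$, the middle permutation $R^3 S_1$ that appears in $\vec{t}_m R A_1 (R^3 S_1) A_1 S_2$ varies with~$d$, and multiplying $\vec{t}_m R A_1$ by different permutations before hitting~$A_1$ again produces vectors that are \emph{not} $\P$-equivalent. The paper computes the four products $\vec{t}_m A_{-\ii}A_d$ separately; they have different first coordinates and are not permutation-related. What the paper does notice is the pairwise reduction $\vec{t}_m A_{-\ii}A_1\, CR^2 \le \vec{t}_m A_{-\ii}A_{-\ii}\,R^3$ and $\vec{t}_m A_{-\ii}A_\ii\, R \le \vec{t}_m A_{-\ii}A_{-1}\,C$, reducing four cases to two --- but not to one.

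More importantly, you are vague about how to establish the componentwise domination by $\vec{t}_{m+2}$ in~(3). Expanding $r_{m+2}$ via the recurrence and comparing coefficients gets messy because the recurrence has depth~3 and the inequalities do not close after one or two applications; in particular you will be forced into terms like $r_{m-4}$ that are not defined for small~$m$. The paper's device is cleaner: verify the two surviving inequalities $\vec{t}_m A_{-\ii}A_{-\ii}R^3\le\vec{t}_{m+2}$ and $\vec{t}_m A_{-\ii}A_{-1}C\le\vec{t}_{m+2}$ numerically for $m=4,5,6$, and then observe that every component on both sides is a fixed linear combination of the $r_k$'s, so all components satisfy the \emph{same} recurrence $x_{N+3}=x_{N+2}+2x_{N+1}+2x_N$; summing the three base inequalities with weights $2,2,1$ propagates them to all $m$. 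You should adopt this bootstrap; without it the ``technically heaviest part'' you allude to has no clear mechanism.
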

%%%%%%%%%%%%%%%%%%%%%%%%%%%%%%%%%%%%%%%%%%%%%%%%%%
\begin{proof}
We exploit Definition~\ref{def:tN} of the sequence~$(\vec{t}_m)$, Definition~\ref{de:Usporadani} of the equivalence~$\sim$, and Lemma~\ref{le:tNrekurence}. The recurrence~\eqref{eq:NasFibo} implies that $(r_m)$ is strictly increasing.
\begin{enumerate}

    \item $\vec{t}_m A_1 \sim \vec{t}_m A_1 R^2 = \vec{t}_{m+1} \,$. 

    \item Direct calculation provides the equalities 
    $$ \vec{t}_m A_{-1} = (\underbrace{r_{m} + 2r_{m-2} + 5r_{m-3}}_{< r_{m+1}}, \underbrace{r_{m-2} + 3r_{m-3}}_{< r_{m-1} + r_{m-2}}, \underbrace{r_{m-2} + r_{m-3}}_{< r_{m-1}}, r_{m}, \underbrace{r_{m-3}}_{< r_{m-2}}, \underbrace{r_{m-2} + r_{m-3}}_{< r_{m-1}}, 0, 0, \underbrace{r_{m-3}}_{< r_{m-2}}) $$
    and 
    $$ \vec{t}_m A_{\ii} R C = (\underbrace{r_{m} + 2r_{m-1} + 2r_{m-3}}_{< r_{m+1}}, \underbrace{r_{m-1} + r_{m-3}}_{< r_{m-1} + r_{m-2}}, r_{m-1}, r_{m}, \underbrace{r_{m-3}}_{< r_{m-2}}, r_{m-1}, 0, 0, \underbrace{r_{m-3}}_{< r_{m-2}}) \, .~~~~~~~~~~~~~~~~~~~~$$
    By Lemma~\ref{le:tNrekurence}, we get $\vec{t}_{m+1} = (r_{m+1}, r_{m-1} + r_{m-2}, r_{m-1}, r_{m}, r_{m-2}, r_{m-1}, 0, 0, r_{m-2})$. Thus $\vec{t}_m A_{-1}  \leq t_{m+1}$ and $ \vec{t}_m A_{\ii} R C \leq \vec{t}_{m+1}$ with strict inequality in the first component. Hence,  $\vec{t}_m A_{-1} \prec \vec{t}_{m+1}$ and $\vec{t}_m A_{\ii} \prec \vec{t}_{m+1}$.

    \item By direct calculation, we obtain 
    \begin{itemize}
        \item $\vec{t}_m A_{-\ii} A_{1} C R^2 = (r_{m} + 14r_{m-2} + 4r_{m-3}, 5r_{m-2} + r_{m-3}, 3r_{m-2} + r_{m-3}, r_{m} + 5r_{m-2} + 2r_{m-3}, r_{m-2}, 3r_{m-2} + r_{m-3}, 0, 0, r_{m-2}) \,$;
        
        \item $\vec{t}_m A_{-\ii} A_{-\ii} R^3 = (r_{m} + 14r_{m-2} + 7r_{m-3}, 5r_{m-2} + 3r_{m-3}, 3r_{m-2} + r_{m-3}, r_{m} + 5r_{m-2} + 2r_{m-3}, r_{m-2} + r_{m-3}, 3r_{m-2} + r_{m-3}, 0, 0, r_{m-2} + r_{m-3}) \,$;
        
        \item $\vec{t}_m A_{-\ii} A_{\ii} R = (3r_{m} + 7r_{m-2} + 2r_{m-3}, r_{m} + r_{m-2}, r_{m}, r_{m} + 5r_{m-2} + 2r_{m-3}, r_{m-2}, r_{m}, \\ 0, 0, r_{m-2} \,$;
        
        \item $\vec{t}_m A_{-\ii} A_{-1} C = (3r_{m} + 7r_{m-2} + 4r_{m-3}, r_{m} + r_{m-2} + r_{m-3}, r_{m}, r_{m} + 5r_{m-2} + 2r_{m-3}, r_{m-2} + r_{m-3}, r_{m}, 0, 0, r_{m-2}+r_{m-3}) \,$.
    
    \end{itemize}
    
    By comparison of the vector components, we observe the following relations:
    $$\vec{t}_m A_{-\ii} A_1 C R^2 \leq \vec{t}_m A_{-\ii} A_{-\ii} R^3 \, \text{ and } \, \vec{t}_m A_{-\ii} A_{\ii} R \leq \vec{t}_m A_{-\ii} A_{-1} C \, .$$
    If we show that $\vec{t}_m A_{-\ii} A_{-\ii} R^3 \leq \vec{t}_{m+2}$ and $\vec{t}_m A_{-\ii} A_{-1} C \leq \vec{t}_{m+2}$, the proof of item (3) shall be completed. Checking validity of the two inequalities for $m = 4, 5, 6$ is laborious, but quite straightforward, with use of Table~\ref{tab:rN} for this task. The rest of the proof is easy. Indeed, all components of the vectors are formed by linear combinations of the sequence $(r_m)$, which satisfies the recurrence relation $r_{N+3} = r_{N+2} + 2r_{N+1} + 2r_{N}$. Assume validity of the inequalities  
    $$\vec{t}_m A_{-\ii} A_{-\ii} R^3 \leq \vec{t}_{m+2} \, , \quad \vec{t}_{m+1} A_{-\ii} A_{-\ii} R^3 \leq \vec{t}_{m+3} \, , \quad \vec{t}_{m+2} A_{-\ii} A_{-\ii} R^3 \leq \vec{t}_{m+4} \, .$$
    Then, by multiplying the first and the second inequality by~$2$ and the third one by~$1$, and summing up the three multiplied inequalities, we obtain $\vec{t}_{m+3} A_{-\ii} A_{-\ii} R^3 \leq \vec{t}_{m+5} \, .$
  
    \item If $\ell \geq 2$, then $\vec{t}_m A_0^\ell = (r_{m}, \underbrace{0, \ldots, 0}_{8 \times})$, and hence $\vec{t}_m A_0^\ell A_d \sim (r_{m}, 0, 0, r_{m}, 0, 0, 0, 0, 0) \prec \vec{t}_{m+1}$ for every $d \in \{\pm 1, \pm \ii\}$.
    
    \noindent If $\ell = 1$, then $\vec{t}_m A_0 A_1 = (\underbrace{r_{m} + r_{m-2} + r_{m-3}}_{< r_{m+1}}, r_{m}, 0, \underbrace{r_{m-2} + r_{m-3}}_{< r_{m-1} + r_{m-2}}, 0, 0, 0, 0, 0) \leq \vec{t}_{m+1} R^2$.\
    Analogously, $\vec{t}_m A_0 A_d \prec \vec{t}_{m+1}$ for $d = -1, \ii, -\ii$. 

\end{enumerate} 
\end{proof}
%%%%%%%%%%%%%%%%%%%%%%%%%%%%%%%%%%%%%%%%%%%%%%%%%%

%%%%%%%%%%%%%%%%%%%%%%%%%%%%%%%%%%%%%%%%%%%%%%%%%%
\begin{proposition}\label{pro:tnJeNejlepsi} Let $(r_N)_{N\in \N}$  be the sequence defined by~\eqref{eq:NasFibo}. Then $\mathrm{Max}(N) =  r_{N}$ for every $N \in \N$.
\end{proposition}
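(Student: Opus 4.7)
The plan is to prove both $\mathrm{Max}(N) \leq r_N$ and $\mathrm{Max}(N) \geq r_N$ for every $N \in \N$.

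For the lower bound, I would exhibit an explicit sequence of digits attaining $r_N$. For $N \in \{1,2,3\}$ the claim is immediate from Table~\ref{tab:rN}. For $N \geq 4$ I unfold the recurrence of Definition~\ref{def:tN}: starting from $\vec{t}_4 = \vec{e} A_1 A_{-1} A_{-\ii} A_{-\ii} R^3$ and iterating $\vec{t}_{m+1} = \vec{t}_m A_1 R^2$, Remark~\ref{re:prohozeniMatic} lets me push each intervening $R^k$ through the subsequent matrices $A_d$ and collect them as a single $Q \in \P$ placed on the far right. This rewrites $\vec{t}_N = \vec{e} A_{e_1} A_{e_2} \cdots A_{e_N} Q$ for a concrete sequence of digits $e_j \in \{\pm 1, \pm \ii\}$. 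Since $Q \vec{e}^\top = \vec{e}^\top$ for every $Q \in \P$ (because the first column of any element of $\P$ is $\vec{e}^\top$), evaluation at $\vec{e}^\top$ gives $\vec{e} A_{e_1} \cdots A_{e_N} \vec{e}^\top = (\vec{t}_N)_1 = r_N$ by Lemma~\ref{le:tNrekurence}, whence $\mathrm{Max}(N) \geq r_N$.

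For the upper bound, I would prove by strong induction on $N$ the stronger statement: \emph{every vector $\vec{u} = \vec{e} A_{d_1} \cdots A_{d_n}$ with $\sum_{j=1}^n |d_j| = N$ satisfies $\vec{u} \preceq \vec{t}_N$.} Because every $P \in \P$ fixes $\vec{e}^\top$, this majorisation implies $(\vec{u})_1 \leq (\vec{t}_N)_1 = r_N$ and hence $\vec{u} \vec{e}^\top \leq r_N$. The base case $N = 4$ is Lemma~\ref{lem:prvni4} combined with $\vec{v}_4 \sim \vec{t}_4$ from Remark~\ref{rem:dvectverice}; the even smaller cases are handled similarly. For $N \geq 5$ I may assume $d_n \neq 0$, since $A_0 \vec{e}^\top = \vec{e}^\top$. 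I split the sequence at its last block, choosing one of the forms covered by Lemma~\ref{le:lepsi}: a single $A_d$ with $d \in \{1,-1,\ii\}$, a zero-padded block $A_0^\ell A_d$ with $\ell \geq 1$, or the paired block $A_{-\ii} A_d$. Writing $\vec{u} = \vec{v} \cdot B$, applying the inductive hypothesis to $\vec{v}$ (which has $N-1$ or $N-2$ non-zero digits), and invoking the matching part of Lemma~\ref{le:lepsi} should yield $\vec{u} \preceq \vec{t}_N$.

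The main technical obstacle is propagating the majorisation $\vec{v} \preceq \vec{t}_{N-k}$ through right multiplication by the final block $B$. From $\vec{v} P \leq \vec{t}_{N-k}$ one gets $\vec{v} P B \leq \vec{t}_{N-k} B$, and iterating Remark~\ref{re:prohozeniMatic} rewrites $P B$ as $B' P'$, where $B'$ is a block of the same shape as $B$ but with a possibly different digit $d'$. For $d' \in \{1,-1,\ii\}$ one gets $\vec{v} B' \preceq \vec{t}_N$ directly from Lemma~\ref{le:lepsi}; to cover the remaining case $d' = -\ii$ one has to exploit the symmetries recorded in Lemma~\ref{le:vztahyMaticA_d}, which give enough freedom in the choice of the witness $P$ to reroute the drift into a good $d'$, or else to pair the troublesome $A_{-\ii}$ with the preceding non-zero digit and reduce to part~(3) of Lemma~\ref{le:lepsi}. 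A careful case analysis on the position of the second-to-last non-zero digit is what ties the induction together, and combining the two bounds yields $\mathrm{Max}(N) = r_N$.
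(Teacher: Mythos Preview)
Your lower-bound argument is fine and matches the paper's implicit direction. The upper-bound argument, however, has a genuine gap at exactly the point you flag as ``the main technical obstacle.''

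Your inductive hypothesis is that every $\vec{u}=\vec{e}A_{d_1}\cdots A_{d_n}$ with $\sum|d_j|=N$ satisfies $\vec{u}\preceq\vec{t}_N$. In the inductive step you peel off a trailing block and invoke Lemma~\ref{le:lepsi}. The difficulty is that after commuting the witness permutation $P$ through the last factor via Remark~\ref{re:prohozeniMatic}, the transformed digit $d'$ may equal $-\ii$, and Lemma~\ref{le:lepsi} says nothing about $\vec{t}_m A_{-\ii}$ by itself. Your proposed rescue --- ``pair the troublesome $A_{-\ii}$ with the preceding non-zero digit and reduce to part~(3)'' --- does not match: part~(3) controls $\vec{t}_m A_{-\ii}A_d$ (the $-\ii$ comes \emph{first}), whereas your situation produces a product of the form $\vec{t}_{m}A_{d''}A_{-\ii}$ (the $-\ii$ comes \emph{last}). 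These are different objects, and there is no reason the bound should transfer. Likewise, the claim that the symmetries in Lemma~\ref{le:vztahyMaticA_d} give ``enough freedom in the choice of the witness~$P$'' is not justified: the admissible $P$ is constrained by the inductive hypothesis on the shorter prefix, and you have not shown one can always steer $d'$ away from $-\ii$. In fact, the very structure of Lemma~\ref{le:lepsi} --- treating $A_{-\ii}$ only in tandem with a subsequent factor --- strongly suggests that $\vec{t}_m A_{-\ii}\preceq\vec{t}_{m+1}$ fails, so your strengthened inductive statement may simply be false.

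The paper sidesteps this by arguing \emph{forward} rather than backward: after reducing to $\vec{t}_4\Pi\vec{e}^\top$ via Lemma~\ref{lem:prvni4}, it takes $L$ maximal with $\Pi=(A_1R^2)^LB$ and analyses the \emph{first} place where $B$ deviates from the good pattern. Each deviation is then handled by Lemma~\ref{le:lepsi} combined with Lemma~\ref{le:nejlepsi}, which converts a strict majorisation at an intermediate stage directly into $\vec{t}_4\Pi\vec{e}^\top<\mathrm{Max}(N)$, without needing a global $\preceq\vec{t}_N$ claim. The one boundary situation --- when the deviation corresponds to $A_{-\ii}$ occurring as the very last non-zero step --- is disposed of by an explicit computation showing $r_{N-1}+5r_{N-3}+2r_{N-4}<r_N$. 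This forward/first-deviation viewpoint is what makes part~(3) of Lemma~\ref{le:lepsi} land in the right order, and it is the missing idea in your attempt.
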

%%%%%%%%%%%%%%%%%%%%%%%%%%%%%%%%%%%%%%%%%%%%%%%%%%
\begin{proof}

The initial cases $N = 0, 1$ are trivial. For $N = 2, 3, 4$, the statement is proved by Lemmas~\ref{le:nejlepsi} and~\ref{lem:prvni4} and from the fact that the first component of vector $\vec{v}_N$ is equal to~$r_N$ for any $N = 2, 3, 4$. For $N \geq 5$, we proceed as follows.\

In order to find $\mathrm{Max}(N)$, by Lemmas~\ref{le:nejlepsi} and~\ref{lem:prvni4}, without loss of generality we look for the maximum of the product $\vec{t}_4 \Pi \vec{e}^\top$, where $\Pi = A_{d_1} A_{d_2} \cdots A_{d_n}$ for some $d_1, d_2, \ldots, d_n \in \{0, \pm 1, \pm \ii \}$. As $\vec{t}_4 = \vec{e} A_{1} A_{-1} A_{-\ii} A_{-\ii} R^3$, we consider $N = 4 + \sum_{\ell=1}^n |d_\ell|$.\

According to Lemma~\ref{le:vztahyMaticA_d}, the matrix $\Pi$ can be expressed as a matrix product, wherein each of the multiplied matrices is $A_1$ or $R$ or~$A_0$. Denote
$$\B = \Bigl\{B_1 B_2 \cdots B_m : m \in \N, B_k \in \{A_0, A_1, R\} \, \text{ for every } k = 1, 2,\ldots, m \Bigr\} \, ,$$
$$\B_0 = \Bigl\{B_1 B_2 \cdots B_m : m \in \N, B_k \in \{A_0, R\} \, \text{ for every } k = 1, 2,\ldots, m \Bigr\} \, .$$

Let $L \in \N$ be the maximal integer such that $d_1, d_2, \ldots, d_L \neq 0$ and $\Pi =(A_1 R^2)^L B$ for some matrix $B \in \B$. In particular, $\vec{t}_4 \Pi = \vec{t}_{4+L} B$ and the number of matrices~$A_1$ occurring in~$B$ is~$N - 4 - L$. If $L = N - 4$, then $B \in \B_0$. In this case, $B \vec{e}^\top = \vec{e}^\top$ and $\vec{t}_4 \Pi \vec{e}^\top = \vec{t}_N B \vec{e}^\top = \vec{t}_N \vec{e}^\top = r_{N} \, .$\

\medskip

We have already shown that $\vec{t}_N \vec{e}^\top = r_{N} \leq \mathrm{Max}(N)$. In the rest of the proof, we show that the assumption $L < N - 4$ leads to strict inequality $\vec{t}_4 \Pi \vec{e}^\top < \mathrm{Max}(N)$. Thereby, the proof shall be completed.\
Let us discuss several cases of possible forms of the matrix~$B$. Note that our choice of $L$ forbids $B$ to be of the form $B = A_1 \tilde{B}$ for any $\tilde{B} \in \B$.
\begin{description}

    \item [Case] $B = R^j A_1 \tilde{B}$, where $j \in \{2, 3\}$ and $\tilde{B} \in \B \,$.\
    
    \noindent Then $\vec{t}_L R^j A_1 \sim \vec{t}_L A_d$, with $d \in \{-1, \ii\}$. By Lemmas~\ref{le:lepsi} and~\ref{le:nejlepsi}, we obtain the inequality $\vec{t}_4 \Pi \vec{e}^\top < \mathrm{Max}(N) \,$. 
  
   \item [Case] $B = R^1 A_1 R^j A_1 \tilde{B}$, where $j \in \{0, 1, 2, 3\}$ and $\tilde{B} \in \B \,$.\
   
   \noindent Then $\vec{t}_L R^1 A_1 R^j A_1 \sim \vec{t}_L A_{-\ii} A_d$ for some $d \in \{\pm 1, \pm \ii\}$. By Lemmas \ref{le:lepsi} and~\ref{le:nejlepsi}, it holds that $\vec{t}_4 \Pi \vec{e}^\top < \mathrm{Max}(N) \,$. 

    \item [Case] $B = R^1 A_1 R^j A_0^k A_1 \tilde{B}$, where $j \in \{0, 1, 2, 3\}$, $k \in \N, k >0$  and $\tilde{B} \in \B \,$.\

    \noindent Due to matrix elements inequality $(A_0 A_1)_{m, n} \leq (R^2A_1)_{m, n}$ for every $m, n = 1, \ldots, 9$, there is also vector elements inequality $(\vec{t}_L R^1 A_1 R^j A_0^k A_1)_n \leq (\vec{t}_L R^1 A_1 R^{j+2k} A_1)_n$ for every $n = 1, \ldots, 9$, and we apply the previous case.

    \item [Case] $B = R^1 A_1 \tilde{B}$, where $\tilde{B} \in \B_0$.\
  
    \noindent Then, necessarily, $L = N - 5$ and $\vec{t}_4 \Pi \vec{e}^\top = \vec{t}_{N-1} R^1 A_1 \tilde{B} \vec{e}^\top = \vec{t}_{N-1} R^1 A_{1} \vec{e}^\top$. Using the fact that $R^1 A_{1} \vec{e}^\top = (1, 2, 2, 0, 0, 1, 0, 0, 0)^\top$, we deduce that $\vec{t}_4 \Pi \vec{e}^\top = \vec{t}_4 R^1 A_{1} \vec{e}^\top = 36 < 39 = r_{5}$ for $N = 5$. For $N \geq 6$, the Lemma~\ref{le:tNrekurence} implies $\vec{t}_4 \Pi \vec{e}^\top = \vec{t}_{N-1} R^1 A_{1} \vec{e}^\top = r_{N-1} + 5r_{N-3} + 2r_{N-4}$. It can be checked that $r_{N-1} + 5r_{N-3} + 2r_{N-4} < r_{N}$, by recurrence~\eqref{eq:NasFibo}.  

    \item [Case] $B = A_0 \tilde{B}$, where $\tilde{B} \in \B \,$.\

    \noindent As $L < N - 4$, there exists $k \in \N, k \geq 1$ and $j \in \{0, 1, 2, 3\}$ such that $B = A_0^k R^j A_1 \hat{B}$ with $\hat{B} \in \B$. By Lemma~\ref{le:lepsi}, we get $\vec{t}_L A_0^k R^j A_1 \prec \vec{t}_{L+1}$, and thus $\vec{t}_4 \Pi \vec{e}^\top < \mathrm{Max}(N)$, as well.

\end{description}
\end{proof}
%%%%%%%%%%%%%%%%%%%%%%%%%%%%%%%%%%%%%%%%%%%%%%%%%%

%%%%%%%%%%%%%%%%%%%%%%%%%%%%%%%%%%%%%%%%%%%%%%%%%%
\begin{proof}[Proof {\rm(of Theorem \ref{th:main})}]
Lemma~\ref{lem:prvni4} gives the statement for $N = 2, 3, 4 \,$.\

Consider $N > 4$ and $x \in \Z[\ii] \setminus \beta \Z[\ii]$. The $3$-NAF representation of~$x$ is of the form $c_1 c_2\cdots c_n$ with $c_{j} \in \{00d : d \in \{\pm1, \pm \ii\}\} \cup \{ 0\}$ and  $c_n \neq 0$. The number of optimal representations of~$x$, by Proposition~\ref{pro:tnJeNejlepsi}, equals $\vec{e} A_{d_1} \cdots A_{d_n} \vec{e}^\top$. From the proof of Proposition~\ref{pro:tnJeNejlepsi}, we see that $\mathrm{Max}(N)$ is reached only if $\vec{e} A_{d_1} \cdots A_{d_n} \sim \vec{t}_N B$, where $B \in \B_0$. As $d_n \neq 0$, the matrix~$B$ is just a power of the matrix~$R$. Therefore, $\vec{e} A_{d_1} \cdots A_{d_n} \sim \vec{t}_N$.

Lemma \ref{le:lepsi} implies that our choice of $\vec{t}_4 = e A_1 A_{-1} A_{-\ii} A_{-\ii} R^3 $ allows unique prolongation of the string $001 \, 00\underline{1} \, 00\underline{\ii} \, 00\underline{\ii}$ to the $3$-NAF representation of a Gaussian integer having the maximal number of optimal representations. Due to Remark \ref{rem:dvectverice}, we have 4~equivalent possibilities how to choose the starting vector~$\vec{t}_4$. All the four possibilities correspond to the $3$-NAF strings having the leading digit equal to~$1$. Thus the four possibilities multiplied by four non-zero digits in~$\D$ give 16~distinct strings for any number $N \geq 4$ of non-zero digits in the $3$-NAF representation of $x$.
\end{proof}

%%%%%%%%%%%%%%%%%%%%%%%%%%%%%%%%%%%%%%%%%%%%%%%%%%%%%%%%%%%%%%%%%%%%%%%%%%%%%%%%%%%%%%%%%%%%%%%%%%%%
%%%%%%%%%%%%%%%%%%%%%%%%%%%%%%%%%%%%%%%%%%%%%%%%%%%%%%%%%%%%%%%%%%%%%%%%%%%%%%%%%%%%%%%%%%%%%%%%%%%%
\section{Representations of Eisenstein Integers}\label{sec:Eisenstein}
%%%%%%%%%%%%%%%%%%%%%%%%%%%%%%%%%%%%%%%%%%%%%%%%%%%%%%%%%%%%%%%%%%%%%%%%%%%%%%%%%%%%%%%%%%%%%%%%%%%%

In this chapter, we study representations of Eisenstien integers in numeration system $(\beta, \mathcal{D})$, with 
 \begin{equation}\label{eq:Eisen}
        \beta = \omega - 1  \text{  \ \ and \ \  }  \ \D = \{ 0, \pm 1, \pm \omega, \pm \omega^2 \}, \text{ where } \omega = \exp(2\pi\ii / 3)\,.
    \end{equation}
It follows from results of Heuberger and Krenn that any $x \in \mathbb{Z}[\omega]$ has a unique $2$-NAF representation, and this representation has the minimal Hamming weight among all $(\beta, \mathcal{D})$-representations of~$x$. The following theorem determines the number of optimal representations of number~$x \in \Z[\omega]$.

\begin{theorem}\label{thm:mainEisen} Let $(\beta, \mathcal{D})$ be numeration system defined in \eqref{eq:Eisen} and 
let $s_N = \lfloor \frac17 (6 \cdot 2^N +1) \rfloor$ for every  $N \in \Z$. 
\begin{itemize}
    \item If  $2$-NAF representation of $x \in \mathbb{Z}[\omega]$  has   $N$~non-zero digits, then $x$ has at most  $s_N$    optimal representations. 
    \item For $N \geq 4$, there are only 12~Eisenstein integers $x \in \Z[\omega]$, $x \notin \beta\Z[\omega]$ with $2$-NAF representation containing $N$~non-zero digits, and having  $s_{N}$ optimal representations.     
    \end{itemize}
\end{theorem}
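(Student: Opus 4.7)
The plan is to mirror the structure of Section~\ref{sec:Penney}, transplanting each step to the numeration system $(\beta,\D)=(\omega-1,\{0,\pm 1,\pm\omega,\pm\omega^2\})$ and exploiting the larger symmetry of the Eisenstein digit set. The first step is to construct the transducer converting an arbitrary $(\beta,\D)$-representation into its $2$-NAF (the analog of Definition~\ref{D:transducer} with width $W=2$), identify the finite set $Q\subset\Z[\omega]$ of states it can reach, and record that $Q$ is invariant under the dihedral group of order~$12$ generated by multiplication by $\omega$ and by complex conjugation. Since $\overline\beta=-\omega^2\beta$, both symmetries preserve the defining relations of the transducer exactly as in Lemma~\ref{le:HranoveSym}. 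The group action partitions $Q$ into a small number of orbits (the Eisenstein counterpart of~\eqref{E:5EqClasses}), and after establishing the analogs of Lemmas~\ref{le:nezaporneCesty}--\ref{le:eliminace} one may condense the transducer to a graph $\Gamma$ on these orbits and restrict further to an optimal-path subgraph $\GG$.

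The second step is to lift $\GG$ back to $Q$, obtaining the graph $\G$ on $7$ vertices announced in the introduction -- most plausibly the $6$-element orbit of a unit together with the singleton orbit $\{0\}$. Defining $A_d\in\Z^{7\times 7}$ by $(A_d)_{q',q}=\#\{\text{edges }(q,q')\text{ of }\G\text{ with output ending in the digit }d\}$, I would prove -- by direct case analysis as in Lemma~\ref{le:vztahyMaticA_d} -- symmetry relations of the form $A_{\zeta d}=S^\top A_d S$ for each sixth root of unity $\zeta$ and a corresponding permutation matrix $S$ in a group $\P\subset\Z^{7\times 7}$ of order~$12$. The Eisenstein analog of Proposition~\ref{prop:maticovy vzorecek} then reads
\begin{equation*}
\#\{\text{optimal representations of }x\}=\vec e\, A_{d_1}A_{d_2}\cdots A_{d_n}\,\vec e^\top,
\end{equation*}
where $d_1,\ldots,d_n$ are the non-zero trailing digits read off the $2$-NAF of $x$ and $\vec e=(1,0,\ldots,0)\in\Z^{7}$.

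The third and central step is the extremal analysis, running parallel to Proposition~\ref{pro:tnJeNejlepsi}. I would find a short optimal product $\vec t_{m_0}=\vec e\,A_{d_1}\cdots A_{d_{m_0}}$ whose majorisation class (in the sense of Definition~\ref{de:Usporadani}) is maximal, and define $\vec t_{m+1}=\vec t_m A_1 S$ for a fixed $S\in\P$. The Eisenstein version of Lemma~\ref{le:lepsi} -- that every single extension $\vec t_m A_d S'$ by a non-zero digit $d$ and permutation $S'\in\P$ is majorised by $\vec t_{m+1}$, strictly unless $(d,S')$ matches the preferred pattern -- will then force the first component of $\vec t_N$ to equal the maximum in~\eqref{eq:Max}. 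A direct calculation should show that $(\vec t_N)_1$ satisfies the order-$4$ linear recurrence
\begin{equation*}
s_{N+4}=2s_{N+3}+s_{N+1}-2s_N,
\end{equation*}
whose characteristic polynomial $(x-2)(x^3-1)$ has dominant root~$2$ and the cube roots of unity as its other roots; combining this with initial values $s_0=1,s_1=1,s_2=3,s_3=7$ and the fact that $2^N\bmod 7$ has period~$3$, one verifies the closed form $s_N=\lfloor\tfrac{1}{7}(6\cdot 2^N+1)\rfloor$. For the count of extremal integers, the Eisenstein analog of Remark~\ref{rem:dvectverice} should produce $2$ equivalent starting patterns for $\vec t_4$, and multiplying by the $6$ non-zero digits in $\D$ gives exactly $12$ distinct $2$-NAF strings of every length $N\ge 4$.

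The main obstacle will be executing the case enumeration that pins down $\GG$ and verifying the majorisation inequalities against $\vec t_m$: although the symmetry group is twice as large and the width $W=2$ makes edge labels shorter than in the Gaussian setting, one still must test every combination of state, read digit and permutation, and exclude the near-miss patterns that achieve the same Hamming weight but strictly smaller vector growth. A second, milder obstacle is the floor in the closed form: the recurrence above holds for $(\vec t_N)_1$ without exception, but converting this to the stated $\lfloor\cdot\rfloor$-expression requires checking base cases in each residue class of $N\bmod 3$.
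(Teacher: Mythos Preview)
Your overall plan mirrors the paper's approach, and much of it is correct: the transducer, the $7$-vertex graph $\G$ on the six units together with $\{0\}$, the matrix formula, and the final count $12=2\times 6$. However, the central third step contains a genuine gap.

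You propose $\vec t_{m+1}=\vec t_m A_1 S$ for a single \emph{fixed} $S\in\P$, expecting every one-step extension $\vec t_m A_d$ to be majorised by $\vec t_{m+1}$. In the Eisenstein system this fails: the optimal digit to append depends on $m\bmod 3$. The paper's recursion is
\[
\vec t_{N+1}=\begin{cases}\vec t_N\,A_1 R & N\equiv 0\pmod 3,\\ \vec t_N\,A_{-1} R & N\equiv 1,2\pmod 3,\end{cases}
\]
reflecting the extremal word $\mathcal S=ghgh\cdots$ with $g=01\,0\omega\,0\underline{\omega}^2$ and $h=-g$; the successive digit ratios alternate between $\lambda^2$ and $\lambda^5$, so no iterate of a single $A_1 S$ reproduces it. Concretely, at $N\equiv 1\pmod 3$ one has $\vec t_N A_1\prec\vec t_N A_{-1}$, so appending $A_1$ is strictly sub-optimal there; and at $N\equiv 2\pmod 3$ a two-step look-ahead is required, since $\vec t_N A_{\pm 1}$ tie after one step but $\vec t_N A_1 A_d\prec\vec t_{N+2}$ for every $d$. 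It is this bifurcation at every third step --- not an analogue of Remark~\ref{rem:dvectverice} about equivalent starting vectors --- that produces the two extremal words $\mathcal S$ and $g\mathcal S$ and hence the factor~$2$ in $12=2\times 6$. Your order-$4$ recurrence $s_{N+4}=2s_{N+3}+s_{N+1}-2s_N$ is correct for the true sequence, and the factor $(x^3-1)$ in its characteristic polynomial is precisely the footprint of this period-$3$ phenomenon; but it cannot be extracted from powers of a single $A_1 S$.

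Two smaller remarks. The paper uses only the cyclic group $\{R^k:k\in\N\}$ of order~$6$, not the dihedral group of order~$12$ you propose; the reflection symmetry is present (since $\overline\beta=\lambda\beta$) but unnecessary. And the transducer's state set $Q$ turns out to equal $\D$ itself, so there are just two orbits $[0],[1]$ from the start and $\Gamma=\GG$ immediately --- no Eisenstein analogue of the elimination of $[2]$ and $[2+\ii]$ carried out in Lemma~\ref{le: kondenzovanyGraf1} is needed.
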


The same method as used in the proof of Theorem~\ref{th:main} can be exploited to prove Theorem~\ref{thm:mainEisen} as well.  

Let us summarize the transducer converting a $(\beta, \D)$-representation of $x \notin \beta\Z[\omega]$ into the unique $2$-NAF representation of~$x$. The set~$Q$ of all states of the transducer coincides with the digit set, and the transducer can be visualized as a graph $G = (Q, E)$, where $q_0 = 0$ is the initial vertex.    

\begin{itemize}
\item If the transducer is in state $q \in Q$ and reads on input the digit $a \in \D$ such that $a + q = q' \beta \in \beta Q$, then it moves to the state~$q'$ and writes the digit~$0$ on output. The set $E$ of edges of the graph~$G$ contains the edge $e=(q,q')$ with label $a|0$.   
\item If $a + q \notin \beta Q$, then the transducer reads an additional digit~$b$ on input. There exists a unique $d \in \D$ such that $q + a + b \beta = q' \beta^2 + d$. The transducer moves to the state~$q'$ and writes on output the pair of digits~$0d$. In this case, $E$~contains the edge $e = (q, q')$ with label $ba|0d$.
\end{itemize}

Since the set $Q$ is invariant under multiplication by $\lambda := \exp(2\pi \imath/6) = -\omega^2$, a pair $(q,q')$ belongs to~$E$ if and only if $(\lambda q,\lambda q')$ belongs to~$E$. Moreover, we get the label of $(\lambda q, \lambda q')$ by multiplying all digits occurring in the label of $(q,q')$ by $\lambda$.

Using the symmetry of the graph, we define an equivalence relation: for $p, q \in V$ we write  
$$p \sim q  \ \ \text{if}  \ \ p = \lambda^k q \ \text{ for some $k \in \Z$}.$$
The set $Q$ of vertices is split into two equivalence classes: $[0] = \{0\}$ and $[1] = \{\pm 1, \pm \omega, \pm \omega^2 \}$.

\medskip

We can see that the transducer in this case as a lot more simple than the transducer producing $3$-NAF representations of Gaussian integers. Therefore, also the proof of Theorem~\ref{thm:mainEisen} is less technical, and need not be elaborated here in detail. We provide just a sketch of the proof, in particular, we describe only those objects whose analogues in case of Gaussian integers played important role for the proof of Theorem~\ref{th:main}. 

\medskip

%%%%%%%%%%%%%%%%%%%%%%%%%%%%%%%%%%%%%%%%%%%%%%%%%%
\begin{description}

\item[Graph $\Gamma$] \quad The graph $\Gamma$, which is a projection of the graph $G$ onto the equivalence classes $[0]$ and $[1]$ using minimal weight of the edges (analogously as in Definition~\ref{D:graph-Gamma}), in this case coincides with the graph $\GG$ (restricted  to optimal paths only), as depicted on Figure~\ref{fig:Eisenstein-graph-optimal}. It is due to the following set of edges:
        \begin{itemize}
            \item edge $e = (0, 0)$ with the label $0|0$ and  weight $w(e) = 0$;
            \item edge $e = (1, 1)$ with the label $\omega 0|0\omega$ and  weight $w(e) = 0$;
            \item edge $e= (0, 1)$ with the label $\omega 1|0 \omega$ and weight $w(e) = +1$;
            \item edge $e = (1, 0)$ with the label $ 00| 01$ and weight $w(e) = -1$.
        \end{itemize}

%%%%%%%%%%%%%%%%%%%%%%%%%%%%%%%%%%%%%%%%%%%%%%%%%%
\begin{figure}
    \centering
    \begin{tikzpicture}[scale=1.5]
        \node[shape=circle,draw=black,minimum size=13mm] (zero) at (0,0) {\([0]\)};
        \node[shape=circle,draw=black,minimum size=13mm] (one) at (2,0) {\([1]\)};
        \path[-latex]
            (zero) edge[bend right=10] node[below] {$+1$} (one)
            (one) edge[bend right=10] node[above] {$-1$} (zero)
            (zero) edge[>=latex,loop left] node[left] {$0$} ()
            (one) edge[>=latex,loop right] node[right] {$0$} ()
        ;
    \end{tikzpicture}
    \caption{Graph $\GG$ for $\beta = \omega - 1$ and $\D = \{0, \pm 1, \pm \omega, \pm \omega^2 \}$, containing optimal paths only, labeled with weights of its edges.}
    \label{fig:Eisenstein-graph-optimal}
\end{figure}
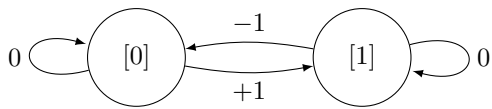
%%%%%%%%%%%%%%%%%%%%%%%%%%%%%%%%%%%%%%%%%%%%%%%%%%

\medskip

\item[Graph $\G$] \quad Analogously to Definition~\ref{D:graph-GG}, we denote by~$\G$ the subgraph of $G$ corresponding to the optimal paths in $\G$, see Fig. \ref{fig:EisensteinColor}

%%%%%%%%%%%%%%%%%%%%%%%%%%%%%%%%%%%%%%%%%%%%%%%%%%
\begin{figure}
    \centering
    \begin{tikzpicture}
      \newcommand{\sides}{0/$1$,60/$-\omega^2$,120/$\omega$,180/$-1$,240/$\omega^2$,300/$-\omega$}
      \node[regular polygon,regular polygon sides = 6,shape border rotate=90,draw=black,minimum size={2cm * 2 / sqrt(3)}] (zero) at (0,0) {\(0\)};
      \foreach \r/\n in \sides
        \draw[rotate=\r] (3,0) node[regular polygon,regular polygon sides = 6,shape border rotate=90,draw=black,minimum size={2cm * 2 / sqrt(3)}] {\n};
      \foreach \r/\n in \sides {
        \begin{scope}[rotate=\r,style=\r]
          \draw (2,0) -- (1,0);
          \draw (-1,-0.2) -- (-2,-0.2);
          \draw[rotate=60] (-1,0.4) -- (-2,0.4);
          \draw[rotate=60] (-1,-0.4) -- (-2,-0.4);
          \draw[rotate=120] (-1,0.2) -- (-2,0.2);
          \draw (-0.5,-{3/2*sqrt(3)-0.3}) -- (0.5,-{3/2*sqrt(3)-0.3});
          \draw[rotate=-60] (-0.5,-{3/2*sqrt(3)}) -- (0.5,-{3/2*sqrt(3)});
          \draw[rotate=-120] (-0.5,-{3/2*sqrt(3)+0.3}) -- (0.5,-{3/2*sqrt(3)+0.3});
          \draw plot[smooth] coordinates {(-2.5,{3/2*sqrt(3)}) (-4.4,2) (-4.4,-2) (-2.5,{-3/2*sqrt(3)})};
          \draw[rotate=60] plot[smooth] coordinates {(-4,0.4) (-4.3,0.2) (-4.3,-0.2) (-4,-0.4)};
          \draw[rotate=-60] plot[smooth] coordinates {(0,{2/sqrt(3)}) (-0.3,1.9) (0.3,1.9) (0,{2/sqrt(3)})};
        \end{scope}
      }
      \draw[style=zero] plot[smooth] coordinates {(-0.1,-0.8) (-0.56,-0.56) (-0.8,0) (-0.56,0.56) (0,0.8) (0.56,0.56) (0.8,0) (0.56,-0.56) (0.1,-0.8)};
    \end{tikzpicture}
    \caption{Scheme of graph~$\G$ for $\beta = \omega - 1$ and $\D = \{0, \pm 1, \pm \omega, \pm \omega^2 \}$. For better transparency, the full labels $In(e)\,|\,Out(e)$ of the edges~$e$ are replaced with colouring of the edges by seven different colours, according to the $Out(e)$-part of the label, as follows:\\
    \centerline{$
        01 \mapsto \protect\tikz[baseline=-0.65ex] \protect\draw[0] (0,0) -- (1,0);,
        0\underline{\omega}^2 \mapsto \protect\tikz[baseline=-0.65ex] \protect\draw[60] (0,0) -- (1,0);,
        0\omega \mapsto \protect\tikz[baseline=-0.65ex] \protect\draw[120] (0,0) -- (1,0);,
        0\underline{1} \mapsto \protect\tikz[baseline=-0.65ex] \protect\draw[180] (0,0) -- (1,0);,
    $}\\\centerline{$
        0\omega^2 \mapsto \protect\tikz[baseline=-0.65ex] \protect\draw[240] (0,0) -- (1,0);,
        0\underline{\omega} \mapsto \protect\tikz[baseline=-0.65ex] \protect\draw[300] (0,0) -- (1,0);,
        0 \mapsto \protect\tikz[baseline=-0.65ex] \protect\draw[zero] (0,0) -- (1,0);.
    $}
    }\label{fig:EisensteinColor}
\end{figure}
%%%%%%%%%%%%%%%%%%%%%%%%%%%%%%%%%%%%%%%%%%%%%%%%%%

\medskip

\item[Matrices $A_d$] \quad When the seven vertices of~$\G$ are ordered as follows:
    \begin{equation}\label{E:matrix-Ad-order-Eisenstein}
        0, +1, -\omega^2, +\omega, -1, +\omega^2, -\omega \ = \ 0, \lambda^0, \lambda^1, \lambda^2, \lambda^3, \lambda^4, \lambda^5, \text{ where }  \lambda = \exp{\frac{2\pi \imath }{6}},
    \end{equation}
and when defining the matrices ${A}_d$ analogously as in (\ref{eq:matrix-Ad}) by
    \begin{equation}\label{eq:matrix-Ad-Eisenstein}
        \bigl({A}_d \bigr)_{q', q} = \text{ the number of edges $e=(q, q')$ in~$\G$ with $d$ as the last letter of $Out(e)$,}
    \end{equation}
we get
    $${A}_0 = \left( \begin{array}{c|cccccc}
        1  &  0 & 0 & 0 & 0 & 0 & 0 \\
        \hline
        0  &  0 & 0 & 0 & 0 & 0 & 0 \\
        0  &  0 & 0 & 0 & 0 & 0 & 0 \\
        0  &  0 & 0 & 0 & 0 & 0 & 0 \\
        0  &  0 & 0 & 0 & 0 & 0 & 0 \\
        0  &  0 & 0 & 0 & 0 & 0 & 0 \\
        0  &  0 & 0 & 0 & 0 & 0 & 0
    \end{array}\right) \, 
    \qquad \text{and} \qquad 
    {A}_1 = \left( \begin{array}{c|cccccc}
        1  &  1 & 0 & 0 & 0 & 0 & 0 \\
        \hline
        0  &  0 & 0 & 0 & 0 & 0 & 0 \\
        0  &  0 & 0 & 0 & 0 & 0 & 0 \\
        0  &  0 & 0 & 0 & 0 & 0 & 0 \\
        1  &  0 & 0 & 1 & 0 & 0 & 0 \\
        2  &  0 & 0 & 1 & 1 & 1 & 0 \\
        1  &  0 & 0 & 0 & 0 & 1 & 0
    \end{array}\right) \, ,$$
and, for any other $d \in \D$, the matrix ${A}_d$ can be obtained with formula $R^\top {A}_d R = {A}_{\lambda d}$ by means of permutation matrix
    $$R = \left( \begin{array}{c|cccccc}
        1  &  0 & 0 & 0 & 0 & 0 & 0 \\
        \hline
        0  &  0 & 1 & 0 & 0 & 0 & 0 \\
        0  &  0 & 0 & 1 & 0 & 0 & 0 \\
        0  &  0 & 0 & 0 & 1 & 0 & 0 \\
        0  &  0 & 0 & 0 & 0 & 1 & 0 \\
        0  &  0 & 0 & 0 & 0 & 0 & 1 \\
        0  &  1 & 0 & 0 & 0 & 0 & 0
    \end{array}\right) \, , \qquad R^6 = I .$$

\medskip 

\item[Sequence $(\vec{t}_N)$ of vectors in $\mathbb{Z}^7$] \quad 
Computer experiments suggested which Eisenstein integers with $N$ non-zero digits in their $2$-NAF  have the maximal number of optimal representations. To specify these $2$-NAF representations, we adopt again the convention that $\underline{d}$ stands for the digit $-d$, and denote 
$$ g = 0 1 0 \omega 0 \underline{\omega}^2 \qquad \text{and} \qquad h = 0 \underline{1} 0 \underline{\omega} 0 \omega^2 \,. $$
We observed that if the $2$-NAF representation of an Eisenstein integer~$x$ is a prefix of the infinite purely periodic word $\mathcal{S} = g \, h \, g \, h \, g \, h \, g \, h \cdots$ or of the infinite periodic word $g \, \mathcal{S} = g \, g \, h \, g \, h \, g \, h \, g \, h \cdots$, then $x$~has the maximal number of optimal representations. 

Definition of the vectors $\vec{t}_N$ reflects the form of the infinite word~$\mathcal{S}$. Using the period~$g \, h$ of the word~$\mathcal{S}$, and due to relation
$$A_{+1} A_{+\omega} A_{-\omega^2} A_{-1} A_{-\omega} A_{+\omega^2} = (A_1 R) (A_{-1} R) (A_{-1} R) (A_1 R) (A_{-1} R) (A_{-1} R) \, ,$$
\noindent we define the sequence $(\vec{t}_N)$ of row vectors in $\Z^7$ recursively as follows: $\vec{t}_0 = \vec{e} = (1,0,\ldots, 0)$, and for every $N \in \N$ we put 
$$\vec{t}_{N+1} = 
    \left\{ \begin{array}{ll} 
    \vec{t}_N A_1 R & \text{ if } \ N \equiv 0 \mod {3} \, ;\\
    \vec{t}_N A_{-1} R & \text{ otherwise.}
    \end{array}\right.$$
Elements of the vector $\vec{t}_N$ can be expressed by using the sequence $s_N = \lfloor \frac17 (6 \cdot 2^N +1) \rfloor$, $N \in \mathbb{Z}$. Note that $s_N = 0$ for every $N < 0$. By induction on~$N$, one can prove for every $N \in \N$ that
$$\vec{t}_{N} = 
    \left\{ \begin{array}{ll} 
    (s_N, 2s_{N-3}, s_{N-3}, 2s_{N-3}, 0, s_{N-1}, 0) & \text{ if } \ N \equiv 0 \mod {3} \, ;\\
    (s_N, 0, s_{N-1}, 0, s_{N-2}, s_{N-2}, s_{N-2})& \text{ if } \ N \equiv 1 \mod {3} \, ;\\
    (s_N, s_{N-2}, s_{N-2}, s_{N-2}, 0, s_{N-1}, 0)& \text{ if } \ N \equiv 2 \mod {3} \, . 
    \end{array}\right.$$

\noindent The described explicit form of $\vec{t}_N$ enables to show for $d \in \mathcal{D}$ and $N \in \mathbb{N}$, $N>0$, that
\begin{equation}\label{eq:NerovnostEisen}
    \begin{array}{llllll}

            \vec{t}_N A_d & \prec & \vec{t}_N A_1 & \sim \ \vec{t}_{N+1} &       \text{\ \ if \ $N \equiv 0\!\!\! \mod {3} \, $} \quad  \text{and} &  \text{ $d \neq 1$,}\\

            \vec{t}_N A_d & \prec & \vec{t}_N A_{-1} & \sim \ \vec{t}_{N+1} &         \text{\ \ if \ $N \equiv 1 \!\!\! \mod {3} \,$} \quad \text{and} &  \text{ $d \neq -1 $,}\\

            \vec{t}_N A_d & \prec & \vec{t}_N A_{-1} & \sim \ \vec{t}_{N+1} &         \text{\ \ if \ $N \equiv 2 \!\!\! \mod {3} \, $} \quad \text{and}&\text{ $d \neq \pm 1 $,}\\
        
            \vec{t}_N A_1 A_{d} & \prec & \vec{t}_N A_{-1} A_1 & \sim \ \vec{t}_{N+2} & \text{\ \ if \ $N \equiv 2 \!\!\! \mod {3} \, $},  \quad & \\

    \end{array}
\end{equation}
\noindent where the relation $\prec$ is introduced analogously to Definition \ref{de:Usporadani} with permutation set $\mathcal{P} = \{R^k: k \in \N\}$. 

\end{description}
%%%%%%%%%%%%%%%%%%%%%%%%%%%%%%%%%%%%%%%%%%%%%%%%%%

\medskip

The $2$-NAF representation of $x \in \Z[\omega]$  can be viewed as a string $u_1 u_2 u_3 \cdots u_n$  over the alphabet 
\begin{equation}\label{E:G-edges-Eisenstein}
    \CC = \{0\} \cup \bigl\{0d : d \in \{\pm 1, \pm \omega, \pm \omega^2 \}\bigr\}\, .
\end{equation}
If  $d_\ell$ denotes  the last letters of $u_\ell$ for every $\ell = 1, 2, \ldots, n$, then the number of non-zero digits in the $2$-NAF representation is $|d_1| + |d_2| + \cdots + |d_n|$  and the number of optimal representations of~$x$ equals 
$$\vec{e} {A}_{d_1} {A}_{d_2} \cdots {A}_{d_n} \vec{e}^\top , \ \text{ where } \vec{e} = (1, 0, \ldots, 0) \in \Z^7 \, .$$
Fix $N = |d_1| + |d_2| + \cdots + |d_n|$. Relation \eqref{eq:NerovnostEisen} implies $\vec{e} {A}_{d_1} {A}_{d_2} \cdots {A}_{d_n} \vec{e}^\top \leq \vec{t}_N \vec{e}^\top = s_N$, which proves the first part of Theorem \ref{thm:mainEisen}. The second part of the theorem is a consequence of the fact that $x \in \mathbb{Z}[\omega]$ and $\lambda^k x$ have, due to symmetries of $\mathbb{Z}[\omega]$, the same number of optimal representations. Thus, if we multiply digits of a prefix of the infinite sequence $\mathcal{S}$ or $u\mathcal{S}$ by $\lambda^k$ for $k=0,1,\ldots, 5$ we get  $2$-NAF representation of an Eisenstein integer with maximal number of optimal representations.

\end{document}